\newcolumntype{R}[1]{>{\raggedleft\arraybackslash }b{#1}}
\newcolumntype{L}[1]{>{\raggedright\arraybackslash }b{#1}}
\newcolumntype{C}[1]{>{\centering\arraybackslash }b{#1}}
\newtheorem{Def}{Definition}
\newtheorem{thm}{Theorem}
\newtheorem{prop}{Proposition}
\newtheorem{lem}{Lemma}
\newtheorem{cor}{Corollary}
\newtheorem{rem}{Remark}
\newcommand{\R}{\mathbb{R}}
\newcommand{\Z}{\mathbb{Z}}
\newcommand{\LL}{\mathcal{L}}
\newcommand{\N}{\mathbb{N}}
\newcommand{\e}{\eta}
\newcommand{\rmd}{\mathrm{d}}
\newcommand{\ddt}{\frac{\rmd}{\rmd t}}
\newcommand{\exval}[1]{\mbox{$\langle \, {#1}\, \rangle$}}
\newcommand{\ba}{\begin{array}}
\newcommand{\ea}{\end{array}}
\newcommand{\bea}{\begin{eqnarray}}
\newcommand{\eea}{\end{eqnarray}}
\DeclareFontFamily{U}{BOONDOX-calo}{\skewchar\font=45 }
\DeclareFontShape{U}{BOONDOX-calo}{m}{n}{
  <-> s*[1.05] BOONDOX-r-calo}{}
\DeclareFontShape{U}{BOONDOX-calo}{b}{n}{
  <-> s*[1.05] BOONDOX-b-calo}{}
\DeclareMathAlphabet{\mathcalboondox}{U}{BOONDOX-calo}{m}{n}
\SetMathAlphabet{\mathcalboondox}{bold}{U}{BOONDOX-calo}{b}{n}
\DeclareMathAlphabet{\mathbcalboondox}{U}{BOONDOX-calo}{b}{n}
\newcommand{\mcb}[1]{{\mathcalboondox #1}}
\numberwithin{equation}{section}
\title{Duality for some models of epidemic spreading}
  \author[1]{C. Franceschini}
    \author[2]{E. Saada}
    \author[3]{G. M. Sch\"utz}
     \author[4]{S. Velasco}
    \affil[1]{\small{Dipartimento di Scienze Fisiche, Informatiche e Matematiche, Universit\`a degli Studi di Modena e Reggio Emilia, Via G. Campi 213/b 41125, Italy}}
    \affil[2]{\small{CNRS, UMR 8145, Laboratoire MAP5, Universit\'e Paris Cit\'e, 45 rue des Saints-P\`eres, 75270 Paris Cedex 06, France}}
    \affil[3]{\small{Centro de An\'alise Matem\'atica, Geometria e Sistemas Din\^amicos, Departamento de Matem\'atica, Instituto Superior T\'ecnico, Universidade de Lisboa, Av. Rovisco Pais 1, 1049-001 Lisboa, Portugal}}
    \affil[4]{\small{Laboratoire MAP5, Universit\'e Paris Cit\'e, 45 rue des Saints-P\`eres, 75270 Paris Cedex 06, France}}
\begin{document}

\maketitle

\begin{abstract} 
We examine the role of boundaries and the structure of nontrivial duality functions
for three non-conservative interacting particle systems 
in one dimension that model epidemic spreading: (i) the diffusive contact 
process 
(DCP), (ii) a  model that we introduce and call generalized diffusive 
contact process (GDCP), both
in finite volume in contact with boundary reservoirs, i.e., with open boundaries, 
and (iii) the susceptible-infectious-recovered 
(SIR) model on $\Z$. We establish duality 
relations for each system through an analytical approach.
It turns out that with open boundaries self-duality breaks down and qualitatively different properties 
compared to closed boundaries (i.e., finite volume without reservoirs) arise: 
Both the DCP and GDCP are ergodic but no longer absorbing, while the respective dual processes are absorbing but not ergodic. We provide expressions for the stationary correlation 
functions in terms of the dual absorption probabilities. 
We perform explicit computations 
for a small sized DCP, and for arbitrary size
in a particular setting of the GDCP.
The duality function is factorized for the DCP and GDCP, contrary to the SIR model for which the duality relation is nonlocal and yields an explicit
  expression of the time evolution of some specific correlation functions, 
  describing the time decay of the sizes of clusters of susceptible individuals.
\end{abstract}


\section{Introduction}\label{sec:intro}
In the context of Markov processes, 
duality is a remarkable tool to analyze a
model of interest using another model, its dual, via an observable for both 
models called duality function. For interacting particle systems, there are various
ways of deriving a duality relation 
: using analytical
tools via computations on generators or intensity matrices, 
through pathwise methods via a graphical representation,  or 
by an algebraic approach. 
A nonexhaustive list of books 
or surveys that deal with duality is 
 \cite{IPS, Liggett2, Griff, FluctuationsSchutz, SSV, CGR2024}, 
 see also references therein. 
 
In this article,  we study three different non-conservative 
interacting particle systems defined on a one-dimensional lattice, 
namely \textit{(i)} the 
diffusive contact process (see for instance \cite{Konno1994}) 
on a finite lattice, 
abbreviated below by DCP, 
\textit{(ii)} a generalization thereof with a variable death rate, 
abbreviated below by GDCP which like the DCP is a special 
case of a broad class of reaction diffusion systems surveyed 
with a view on duality in \cite{RG,LloydII,sudbury2000dual},
and \textit{(iii)} the 
susceptible-infectious-recovered (SIR) model on $\Z$
(as in \cite{Schutzetal2008}). 
All these particle systems, described
informally below and defined formally in terms of their generators in Sections 
\ref{dcp section}--\ref{sec:SIR}, are Markovian models for the 
spreading of infectious diseases, but with different microscopic dynamics. 
For each of these models we prove a duality relation 
using analytical tools. 

Informative duality relations are, 
in general, rare and exceptional. They occur, for 
example, in conservative particle systems in the presence of non-Abelian 
symmetries of the generator, as first pointed out in \cite{AbelianSchutz}
and further developed in \cite{Giardina&al2009},
or when the time evolution of $n$-point correlations is fully determined by 
$k$-point  correlations of lower or equal order $k\leq n$ \cite{RG,Fuji97}. 
This is not the case in general for models such as ours,  where the total number of 
particles is not 
conserved by the bulk dynamics, and the evolution of correlations involves 
higher order correlations.

The main novelty of our discussion of duality in the DCP and
the GDCP are open boundaries
that allow for particle exchanges with external reservoirs at the boundary
sites of the
one-dimensional finite lattice, much like in the paradigmatic 
open simple exclusion process \cite{Spohn83,Schutz1993PhaseTI,Derrida1993ExactSO,carinci2013duality,Floreani22}.
In the papers \cite{RG,LloydII} such reservoirs are not 
considered and in \cite{sudbury2000dual} 
they are actually explicitly ruled out as they include spontaneous 
creation of particles
which invalidates the results obtained in that paper.
Reservoirs have been considered in \cite{DianeSchwartz}, but at generic lattice sites
and for the purpose of establishing ergodic theorems, while we focus 
on boundary reservoirs and study their effect on correlations.

Indeed, it turns out that open boundaries lead to qualitatively different properties:
 For closed boundaries the empty configuration is an absorbing state, 
 so the invariant measure is unique and completely explicit. It is 
 the trivial Dirac measure 
 concentrated on the empty configuration. Moreover, both the closed DCP and 
 closed GDCP are
self-dual \cite{Konno1994,sudbury2000dual}. In contrast, when adding 
reservoirs of 
particles, which can remove or insert particles into the system at random times,
three new features arise: 
\textit{(i)} The empty configuration is 
no longer an absorbing state, 
as the reservoirs maintain the process ``alive''. 
\textit{(ii)} Self-duality is lost.
The dual process has absorbing sinks which are additional sites at the
edges of the one-dimensional lattice which absorb particles without 
ever releasing them. 
\textit{(iii)} The dual process is not ergodic. 
As time tends to infinity, the limiting measure depends 
strongly on the initial configuration.

Despite these differences some features of duality persist 
in the presence
of reservoirs. This is the factorization of the duality function, 
meaning that the global duality function is 
a product over local duality functions on each lattice site.
Such a duality 
is long-known for the contact process \cite{Liggett2} both on $\Z$ and
for closed boundaries and appears also 
in the closed DCP and GDCP 
\cite{Konno1994,sudbury2000dual}. Here we prove -- somewhat unexpectedly --
 that apart from extra boundary terms the same factorized duality function can be used also for the open DCP and open GDCP and yields
information about the invariant measure in terms of absorption probabilities
of the dual process. 
In a special case of the GDCP, it also yields explicit results for the time-dependent 
one-point function for arbitrary initial states. 

Duality functions that factorize into local duality functions for each lattice
site appear also in the general discussions of duality for reaction-diffusion systems
of \cite{RG,LloydII,Fuji97,sudbury2000dual} and thus factorization appears to be a natural feature of informative dualities \cite{Redi17}.
For the SIR model on $\Z$, the main novelty is the appearance of a
non-factorized kind of duality, that we call {\it cluster duality}, since the 
duality function relates the particle configuration in a cluster of 
neighboring sites to the dual process which involves the boundaries of the 
cluster. This type of non-local duality function, reminiscent of the so-called 
empty-interval method used to study correlations in reaction-diffusion 
processes \cite{Burschka89,Alim01}, 
appears to be unexplored in the context of duality in
interacting particle systems and may have further applications to other models. 
Also, note that although we treat each model separately, we aim, in future work, 
at a unified treatment of such models, via an algebraic approach. 
\medskip

We now briefly describe the three models studied in this work, 
that are one-dimensional interacting particle systems, 
either on $\Z$ or on $\Lambda_N:= \{1,..,N\}$ with $N \geq 1$ sites, and 
the results we obtain for them through duality. Let us first recall the 
definition of duality between two Markov processes.
\begin{Def}[Definition 3.1 in Chapter II of \cite{IPS}]\label{def:duality}
Let $X = (X_t)_{t \geq 0} $ and $Y = (Y_t)_{t \geq 0} $ be two continuous 
time Markov processes with state spaces $\Omega$ and 
$\Omega^{{\rm dual}}$, respectively.
We say that $X$ and $Y$ are dual with respect to a local duality function 
$D: \Omega\times {\Omega}^{{\rm dual}}\to \mathbb{R}$ if
\begin{equation} \label{processduality}
    \mathbb{E}_{x} D(X_t, y) =    \mathbb{E}_{y} D(x, Y_t)
\end{equation}
for all $(x, y) \in \Omega \times \Omega^{{\rm dual}}$ and $t \geq 0 $.
 In equation \eqref{processduality},   $\mathbb{E}_{x}$ (respectively 
 $  \mathbb{E}_{y}$) is the expectation w.r.t. the law of the process $X$ 
 initialized at $x$ (respectively the process $Y$  initialized at $y$). 
\end{Def}

In the diffusive contact process on $\Lambda_N$ (DCP),  
each lattice site can be occupied by 
at most one particle. Its dynamics is the superposition of the 
(basic) contact process (CP) with birth rate $\lambda>0$, with the 
symmetric simple exclusion process (SSEP), a particle exchange dynamics 
parameterized by a diffusion coefficient $\mathcal{D}\geq 0$. In the CP, 
 a particle on site $x$ on the lattice ``dies''
(i.e., disappears from the lattice) with rate 1 and ``infects'' (i.e. attempts to
create another particle),  with rate $\lambda>0$, on a neighboring site, provided
the latter is empty; otherwise the creation attempt fails. In the SSEP, particles 
attempt to jump to nearest neighbor sites, at rate $\mathcal{D}\geq 0$, provided 
the chosen target site is empty; otherwise the jump attempt fails, 
see \cite{IPS, Liggett2} for a precise definition
of both the CP and the SSEP.

The goal of using duality for the DCP, is to get a better 
understanding of its stationary state, when in contact with boundary reservoirs. 
In Section \ref{dcp section},  we establish in Theorem \ref{Dual} a duality result 
between the DCP with an open boundary, and the 
DCP with a purely absorbing boundary. From this duality relation, 
we provide a new expression for the correlation functions of the invariant measure 
in terms of absorption probabilities, and compute
them for small sized systems in appendix \ref{appN2}.  
We also comment briefly on the fast stirring limit, in which the
process effectively reduces to a birth-death chain on  the integers
$\{0,1,\dots,N\}$. Note that fast stirring limits have been much investigated 
for infinite volume systems in $\Z^d$, to establish bounds on the critical value
of the contact process, see e.g.
 \cite{konno1995asymptotic,berezin2014asymptotic,levit2017improved,mytnik2021general} 
 and also with a rescaling of time to study
hydrodynamic limits, see e.g. \cite{demasietal,durneu}.

The generalized diffusive contact process on $\Lambda_N$
(GDCP) has the same structure
as the DCP, namely the superposition of a SSEP with diffusion coefficient 
$\mathcal{D}$, and a generalization of the contact process, that is, a process with 
infection rate $\lambda>0$, but with death rates depending on the occupation 
variables on the  neighboring sites, instead of being constant equal to $1$. 
In Section \ref{sec:GDCP}, we derive in Subsection \ref{subsec:dualGDCP} 
a duality relation for the GDCP, and focus on a particular case that induces  
a dual process with no birth rate. As an application of the latter dual relation, 
we compute in Subsection \ref{subsec:compcor} the one-point
function in the invariant measure for a general lattice of $N$ sites.
Moreover, we obtain a system of ordinary differential equations
for the time-dependent one-point function, for arbitrary 
initial distributions.
After some elementary transformations, this system of ODE's becomes
-- again somewhat surprisingly -- identical to that
of the SSEP with open boundaries, which has no nonconservative bulk 
processes.\medskip

The one-dimensional susceptible-infectious-recovered (SIR) model, that goes 
back to the partial differential-equation model for epidemics introduced 
in \cite{Kermack&al1927}, describes the evolution on the infinite lattice $\Z$ 
(hence without reservoirs), of three species of individuals: susceptible 
(in state $S$), infectious ($I$),
recovered ($R$). An infectious individual infects a susceptible neighbor
 at rate $\beta>0$, it recovers and becomes immune 
 (that is, it remains forever in state $R$) at rate $\gamma>0$. 
This process has an 
interesting feature in common with the classical problem
of random sequential adsorption \cite{Evans1993RandomAC} which is 
a discrete version of R\'enyi's random space filling problem \cite{Renyi63}: 
an ongoing fragmentation
of the state space into independent ergodic components occurs
as the process evolves in time. This is accompanied
by a strong dependence of the limiting measure on the initial configuration.

As in the previous sections, our goal is to compute correlation functions 
which,
like in random sequential absorption, remains a non-trivial task despite 
the fragmentation of the state space. 
However for the SIR model, this is quite different to the DCP and the GDCP 
where we were interested in the invariant measure. Here, the state space is 
not irreducible and there are infinitely many invariant 
measures (that are Dirac measures on blocked states), so the asymptotic behavior of 
the system is highly dependent on the initial condition. We are rather 
interested in the time evolution of correlation functions according to the 
initial state of the system. In Section \ref{sec:SIR}, we study the 
``$n$-point cluster functions'', named as such in 
\cite{Schutzetal2008}, which are quantities written 
in terms of higher order correlation functions. 
Thanks to these cluster functions, we derive in Subsection \ref{sectioncluster}
 a duality relation between the SIR model and a two dimensional random walk 
 on two layers with a trap. From that, using a probabilistic approach, we 
 provide in Subsection \ref{subsec:appldual} an explicit expression of 
 the average of these cluster functions at any time. This new treatment of 
 the model allows us to extend the results for 
translation invariant initial distributions derived in
 \cite{Schutzetal2008}, to the case of non translation 
invariant initial distributions. \medskip

The paper is organised as follows. In Section \ref{dcp section}, we study the
DCP, adding in Appendix \ref{Konno}  computations 
that do not rely on duality. In Section \ref{sec:GDCP} we study the GDCP, 
and in Section \ref{sec:SIR}, we study the SIR model.
The proofs for all results are provided at the end of their respective sections.

\section{The diffusive contact process (DCP)} \label{dcp section}

In physical interacting particle systems that at their edges 
are in contact with 
external particle reservoirs, the particle exchange with such reservoirs is 
commonly modelled by so-called open boundaries. This means that particles 
at the edge sites of the lattice on which the particle system is defined are created 
and annihilated at certain rates. The reservoirs themselves do not enter the
mathematical description of the process.
In this section, we define the DCP with open 
boundaries, and establish the duality relation with the DCP with purely absorbing
boundaries. These are edge sites from which particles jump into 
auxiliary sites that correspond to sinks from which no particle can escape. In contrast
to the reservoirs envisaged in the open boundary setting, these sinks become
part of the dual process by enlarging the dual state space (compared to the
state space of the DCP with open boundaries) to keep track of
the number of absorbed particles.

A duality relation which turns reservoirs into sinks is quite common in 
interacting particle systems. We refer to \cite{DianeSchwartz} 
for an early result where absorbing sites in dual processes are discussed
with a view on establishing ergodic theorems. 
Such a duality also appears in the SSEP on a discrete segment 
with two open boundaries, see \cite{Spohn83,carinci2013duality,Floreani22}. However,
the analysis of the effect of reservoirs on the stationary state 
of the DCP substantially differs from the case of conservative particle systems 
such as the  SSEP, where 
the system is driven out of equilibrium due to a current of particles 
produced by the effect of reservoirs tuned differently, 
see \cite{Derrida,ExactSchutz, gonccalves2017hydrodynamics}. In our 
non-conservative setting, the system is constantly out of equilibrium.

\subsection{The model}\label{subsec:model-dcp}

For $N\geq 1$, denote by $\Lambda_N = \{1, \ldots,N\}$ the one dimensional 
finite chain of size $N$, which we refer to as the bulk. A site of 
$\Lambda_N$ is either empty (in state $0$), or occupied (in state $1$). 
Therefore, the state of the system is described by an element 
$\e=(\e_1,\ldots,\e_N)\in \Omega_N:=\{0,1\}^{\Lambda_N}$ such that for $ x \in \Lambda_N$,
\begin{equation}
\e_x=
    \begin{cases}
        &1~ \text{if}~x ~\text{is occupied}\\
        &0~ \text{if}~x ~\text{is empty.}
    \end{cases}
\end{equation}
In the bulk, the dynamics considered is the superposition of a contact process 
with parameter $\lambda>0$ (CP), with a symmetric simple exclusion process (SSEP).
Both these dynamics are of nearest-neighbor type and are defined as follows. 
For $x,y\in \Lambda_N$, $x\sim y$ means that $x$ and $y$ are 
neighboring sites in $\Lambda_N$.\medskip

In the CP, a site $x \in \Lambda_N$ becomes 
occupied at rate $\lambda \sum_{y\sim x} \e_y$ if it is empty, 
and becomes empty at rate $1$ 
if it is occupied. The generator of this dynamics acts on 
$f:\Omega_N \rightarrow \R$ as follows, 
for $\eta\in\Omega_N$ \cite{Liggett2}:
\begin{equation}\label{contact generator}
\begin{split}
        \LL^{\rm CP} f(\e) 
        &= \sum_{x=2}^{N-1}\big[\e_x + \lambda\big(1-\e_x\big)\big(\e_{x-1}
         + \e_{x+1}\big) \big]\big[f(\e^x)-f(\e) \big]\\
        &+ \big[\e_1 + \lambda\big(1-\e_1\big)\e_2 \big]\big[f(\e^1)-f(\e) \big]\\
        &+ \big[\e_N + \lambda\big(1-\e_N\big)\e_{N-1} \big]\big[f(\e^{N})-f(\e) \big].
\end{split}
\end{equation}
where  for $x,y\in \Lambda_N$, 
\begin{equation}\label{def:flip} 
   \e^x_y= \left\{
    \begin{array}{ll}
       \e_y ~ ~\text{if}~ y\neq x\\
       1-\e_x~~ ~ ~ ~~
      \text{if}~ y=x
        \end{array}
\right.
\end{equation}
represents the flip of the occupation variable at site $x \in \Lambda_N$. 
Notice that this generator can be re-written as follows:
\begin{equation}\label{def:genCP}
     \LL^{\rm CP} f(\e) = \frac{\eta_1}{2} \big[f(\e^1)-f(\e) \big] + \sum_{x=1}^{N-1}\LL_{x,x+1}^{\rm CP} f(\e) 
     + \frac{\eta_N}{2} \big[f(\e^N)-f(\e) \big] 
\end{equation}
with the bond generator
\begin{equation}\label{def:genCPbond}
\begin{split}
        \LL_{x,x+1}^{\rm CP}f(\e) &= \frac{1}{2}\e_x\big[f(\e^x)-f(\e) \big]
        + \lambda (1-\e_x)\e_{x+1}\big[f(\e^x)-f(\e) \big]\\
        & + \frac{1}{2}\e_{x+1}\big[f(\e^{x+1})-f(\e) \big]
        + \lambda (1-\e_{x+1})\e_{x}\big[f(\e^{x+1})-f(\e) \big].
\end{split}
\end{equation}

In the SSEP, particles jump to one of their neighboring sites, under the 
exclusion rule, namely each site can accommodate at most one particle, 
and the direction of the jump is not biased to the left or right.
The generator of this dynamics acts on $f:\Omega_N \rightarrow \R$ as follows, 
for $\eta\in\Omega_N$ :
\begin{equation}\label{exclusion generator}
    \LL^{\rm SSEP}f(\e) = \sum_{x=1}^{N-1}\LL_{x,x+1}^{\rm SSEP}f(\e),
    \end{equation}
with
\begin{equation}\label{exclusion bond generator}
\LL_{x,x+1}^{\rm SSEP}f(\e) = \e_x(1-\e_{x+1})\big(f(\e^{x,x+1})-f(\e)\big) 
+\e_{x+1}(1-\e_x)\big(f(\e^{x+1,x})-f(\e)\big),
\end{equation}
where for $x,y\in \{1,\ldots ,N\}$, if $\eta_x(1-\eta_y)=1$,
\begin{equation}\label{def:jump}  
   \e^{x,y}_{z}= \left\{
    \begin{array}{ll}
       \e_{z} ~ ~  ~ ~  ~ ~ \text{if}~ z\notin\{x,y\}\\
       \e_{x}-1 ~~ ~ ~ ~~
      \text{if}~ z=x \\
      \e_{y}+1  ~~ ~ ~ ~~
      \text{if}~ z=y.
        \end{array}
\right.
\end{equation} 
and $\eta^{x,y}=\eta$ otherwise.

We study the open boundary version of the DCP, namely we imagine
reservoirs of particles at each edge of the system, which insert
or remove a particle on sites $1$ and $N$. These reservoirs are parameterized 
by four positive parameters $\alpha,\gamma,\beta,\delta$. A particle is 
inserted at site $1$, resp. $N$, at rate $\alpha$, resp. $\delta$, provided 
the site is empty. A particle is removed from the system at site $1$, resp. $N$, 
at rate $\gamma$, resp. $\beta$, provided the site is occupied (see 
Figure \ref{C+Ewithreservoirs}). The associated generators of these boundary 
dynamics act on $f:\Omega_N \rightarrow \R$ as follows, for $\eta\in\Omega_N$:
\begin{eqnarray}\label{leftres}
    \LL_{-} f(\e) &=& \alpha\big(1-\e_1\big)\big[f(\e^1) - f(\e) \big] 
    + \gamma \e_1 \big[f(\e^1) - f(\e) \big]\\
\label{rightres}
\hbox{and} \qquad   \LL_{+} f(\e) &=& \delta\big(1-\e_N\big)\big[f(\e^N) - f(\e) \big] 
    + \beta \e_N \big[f(\e^N) - f(\e) \big].
\end{eqnarray}
Notice that we do not put an index `DCP' in these generators since they are not 
specific to our dynamics.

Finally, the generator for the open DCP is given by
\begin{equation}\label{generator_opendiffusivecontact}
   \mathcal{L}^{\rm DCP}= \LL_{-} + \LL^{\rm CP}+\mathcal{D}\LL^{\rm SSEP} + \LL_{+},
\end{equation} 
where $\mathcal{D}\geq 0$ is the diffusion parameter. 
Superposing the contact dynamics with the diffusion dynamics enables us to study the same model as in \cite{mourragui2023hydrodynamic}, which is more general than the dynamics governed by the contact process only. Setting $\mathcal{D} = 0$ would eliminate the contribution of the diffusion. We are, however, interested in the case when $\mathcal{D} \to + \infty $, the so called fast stirring limit studied in subsection \ref{subsec:faststirring}.
The dynamics of the DCP is irreducible and as the state space $\Omega_N$ 
is finite, the process admits a unique invariant measure $\nu^{\rm DCP}$, 
which depends on $\alpha, \delta, \gamma,\beta, \mathcal{D}$ and $\lambda$. 
When $\alpha=\gamma=\beta=\delta=0$ we speak of closed boundaries.
In this case the empty configuration is an absorbing state which means that
the invariant measure is the trivial Dirac measure concentrated on the 
empty configuration.
As soon as $\alpha\neq 0$ or $\delta\neq 0$, the empty configuration is no 
longer an absorbing state and $\nu^{\rm DCP}$ cannot be found by direct 
computations when $N$ is large.
\begin{figure}\label{C+Ewithreservoirs}
        \centering
        \includegraphics[scale=0.18]{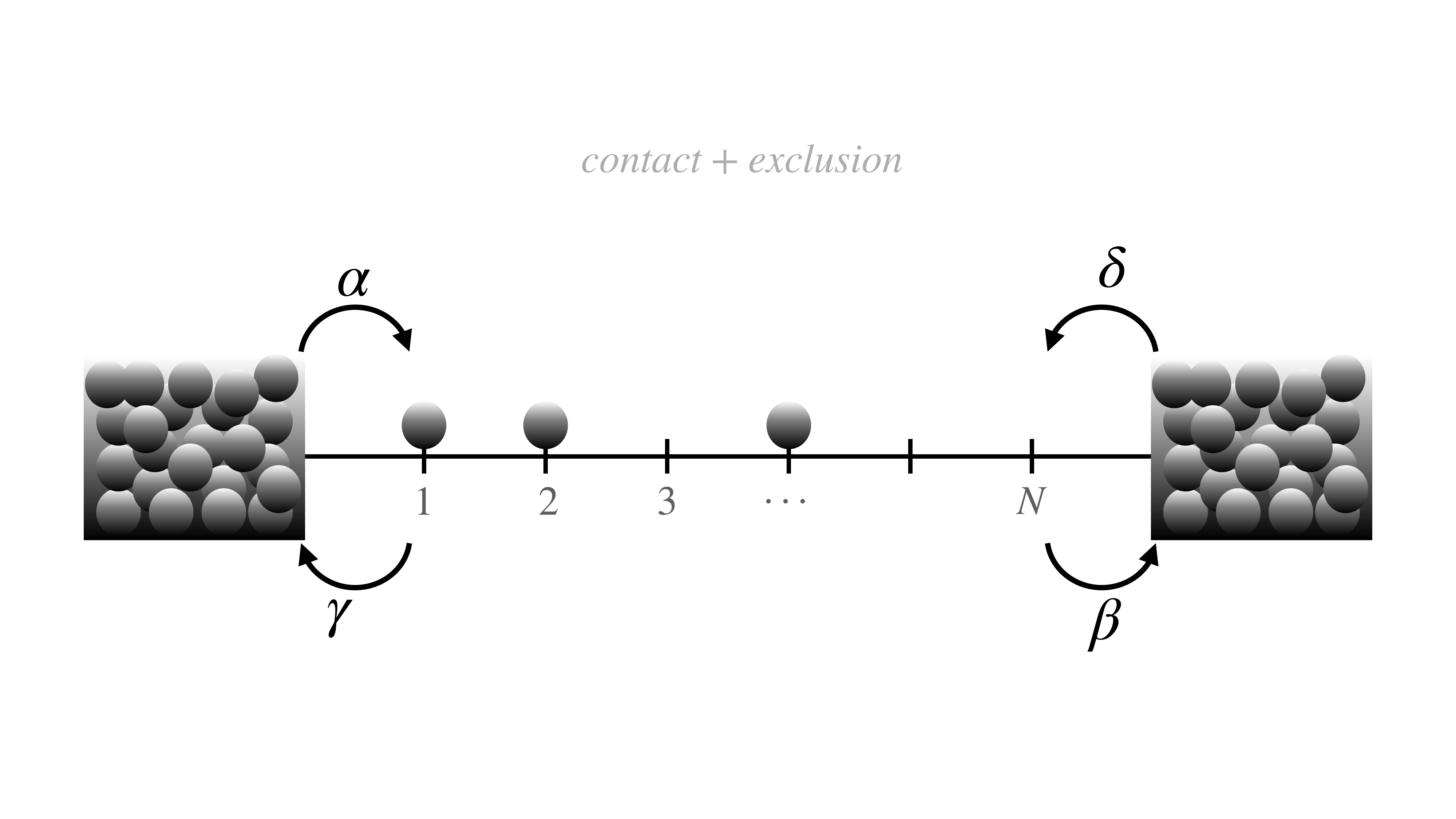}
        \caption{Contact + exclusion process on $\Lambda_N$ with reservoirs.}
    \end{figure}

\subsubsection*{Matrix formulation}
Since the DCP and the GDCP are particle systems on a finite lattice, 
and therefore with a finite state space, 
the duality function can be written as a matrix and the Markov 
generators of the processes can be written as 
intensity matrices \cite{liggett2010continuous}. 
This matrix approach with the choice of basis \eqref{order_i} below
is often a convenient tool to treat interacting particle
systems with countable state 
space \cite{Gwa1992BetheSF,LloydI,ExactSchutz,Giardina&al2009}
and will be used below alongside with the more traditional generator 
treatment used above.

Recall that an intensity matrix $L$ is such that
\begin{equation*}
L(x,y) \geq 0 \qquad \text{for} \qquad x \neq y \qquad 
\text{and} \qquad \sum_{y} L(x,y) = 0  \;.
\end{equation*}
For $x \neq y$, $L(x,y)$ represents the rate to go from state $x$ to state $y$.
\begin{rem}[Countable state space]\label{Matrixapproach}
If in Definition \ref{def:duality} both the original process 
$X = (X_t)_{t \geq 0} $ and the dual process $Y = 
(Y_t)_{t \geq 0} $ are Markov processes with a countable state space, the 
duality relation can be 
written in matrix notation as \cite{LloydII}
\begin{equation} \label{matrixnotation}
    LD = D\left(L^{\rm dual} \right)^T
\end{equation}
where $L$ is the intensity matrix corresponding to the Markov generator ${\cal 
L}$ of $X$, $L^{\rm dual}$ is the intensity matrix 
corresponding to ${\cal L}^{\rm dual}$ and 
the superscript $T$ denotes matrix transposition. Equivalently one could 
choose to adopt the quantum Hamiltonian representation 
\cite{LloydI,ExactSchutz} of the intensity matrix, namely $H=-L^T$, and in this 
case duality reads $H^TD=DH^{\rm dual}$, see \cite{AbelianSchutz}.
\end{rem}

Therefore, as the state space of the DCP is finite, one can encode the dynamics 
in a finite sized matrix. To write these matrices in the canonical basis $(e_i)_{1\leq i\leq 2^N}$, to each vector $e_i$, we associate the 
unique configuration $\e^{i}=(\e_1^{i},...,\e_N^{i})$ such that 
\begin{equation}\label{order_i}
    i= 1 + \sum_{k=1}^N 2^{N-k}\e_k^{i}.
\end{equation}
For example, for $N=2$, the order is
\begin{eqnarray*}
    e_1=(1,0,0,0) ~\text{associated to}~ (0,0),
    && ~e_2=(0,1,0,0) ~\text{associated to}~ (0,1),\\
    ~e_3=(0,0,1,0)~ \text{associated to}~ (1,0)~
    &&\text{and}~~e_4=(0,0,0,1)~ \text{associated to}~ (1,1).
\end{eqnarray*}
With this ordering of configurations, the bulk dynamics matrix writes
\begin{equation}\label{dcp-bulkmatrix}
L^{\rm CP}+ \mathcal{D}L^{\rm SSEP}=\sum_{x=1}^{N-1} \ell_x 
+ \widetilde \ell_1+\widetilde \ell_N,
\end{equation}
where for  $x=1, \ldots, N-1$, denoting by $\otimes$ the Kronecker product 
and $\mathds{1}$ the two-dimensional unit matrix,
\begin{equation}\label{dcp-ellx}
\ell_x := \mathds{1}^{\otimes (x-1)}\otimes \ell \otimes \mathds{1}^{\otimes (N-1-x)},
\end{equation} with $\ell$ 
corresponding to $\LL_{x,x+1}^{\rm CP}+\mathcal{D}\LL_{x,x+1}^{\rm SSEP} $: 
\begin{equation*}
\ell = \begin{pmatrix}
0 &  0 & 0 & 0  \\
1/2  & -( 1/2 + \mathcal{D} + \lambda) & \mathcal{D} & \lambda \\
1/2   & \mathcal{D} & -( 1/2  + \mathcal{D} + \lambda) & \lambda  \\
0 & 1/2  &1/2  & -1
\end{pmatrix}, \;
\end{equation*}
and, for $x=1$ and $x=N$, $\widetilde\ell_1$ and $\widetilde\ell_N$ correspond 
to the first and last terms in the right hand side of \eqref{def:genCP} with,
\begin{equation}\label{dcp-ell1 and ellN}
\widetilde\ell_1 := \widetilde \ell \otimes \mathds{1}^{\otimes (N-1)} 
\qquad \text{ and } \qquad 
\widetilde\ell_N := \mathds{1}^{\otimes (N-1)} \otimes \widetilde \ell
\end{equation}
where 
\begin{equation*}
\widetilde \ell = \begin{pmatrix}
0 &  0   \\
1/2  & -1/2 
\end{pmatrix}. \;
\end{equation*} 
Regarding the boundary dynamics the matrices  $\ell_1^{-}$ and $\ell_1^{+}$
correspond respectively to $\mathcal{L}_{-}$ in \eqref{leftres} and 
$\mathcal{L}_{+}$ in \eqref{rightres}, with
\begin{equation}\label{dcp-ell1- and ellN+}
\ell_1^- :=  \ell^- \otimes \mathds{1}^{\otimes (N-1)}, \qquad \text{ and } \qquad \ell_N^{+} := \mathds{1}^{\otimes (N-1)} \otimes  \ell^+,
\end{equation}
where 
\begin{equation*}
\ell^{-} = \begin{pmatrix}
-\alpha &  \alpha   \\
\gamma  & - \gamma 
\end{pmatrix} , \quad
\ell^{+} = \begin{pmatrix}
-\delta &  \delta   \\
\beta  & - \beta 
\end{pmatrix}.
\end{equation*}
In total, the intensity matrix of the DCP dynamics is given by
\begin{equation}\label{dcp-intensitymatrix}
 L^{\rm DCP} =   \sum_{x=1}^{N-1} \ell_x + \widetilde \ell_1
 +\widetilde \ell_N + \ell_1^{-} + \ell_{N}^{+}.
\end{equation}
\subsection{Duality for the DCP}\label{subsec:dcp-duality}
The open DCP has a diffusive and contact bulk dynamics 
with spontaneous creation and annihilation of particles at sites $1$ and $N$ 
due to the presence of two reservoirs in ghost sites $0$ and $N+1$ 
which, as discussed above, do not appear in the process. 
Theorem \ref{Dual} shows that the dual DCP preserves the same dynamics 
in the bulk while 
the boundary mechanism is quite different. The dual process is defined 
on an extended lattice which includes sites $0$ and $N+1$ where particles are 
permanently absorbed, thus describing particle sinks. Because of this 
the dual state space is different,
\begin{equation}\label{def:dual-state-space-DCP}
   \Omega_N^{\rm dual}= \mathbb{N}_0 \times \{0,1\}^{\Lambda_N} \times  \mathbb{N}_0 ,
\end{equation}
where  $\mathbb{N}_0=\{0,1,2,\cdots\}$  is the set of non-negative integers,
 while  $\mathbb{N}=\{1,2,\cdots\}$. 
\begin{thm} \label{Dual}
The open DCP $(\e_t)_{t\geq 0}$ with generator 
\eqref{generator_opendiffusivecontact} is dual to a purely absorbing 
contact process $(\xi_t)_{t\geq 0}$
with duality function $D:\Omega_N \times \Omega_N^{\rm dual} \rightarrow \R$, 
given by
   \begin{equation} \label{globaldualityfct}
       D(\e,\xi) = \Big(\frac{\gamma}{\alpha+ \gamma} \Big)^{\xi_0}
        H (\eta, \xi) \Big( \frac{\beta}{\beta+ \delta} \Big)^{\xi_{N+1}} \;,
   \end{equation}
     where the bulk duality function is:
     \begin{equation}\label{bulkduality}
        H (\eta, \xi) = \prod_{x\in A(\xi)}(1-\e_{x}) 
        = \prod_{x \in \Lambda_N} \left( 1- \eta_{x} \right)^{\xi_{x}} \;,
     \end{equation}
     with $A(\xi) = \Big\{y\in \{1,...,N\},~\xi_y=1 \Big\}$.
The dual generator is given by
\begin{equation}\label{Ldualaceb}
        \LL^{\rm DCP, Dual}= 
        \LL_-^{\rm Dual} + \LL^{\rm CP}+ \mathcal{D}\LL^{\rm SSEP} 
        + \LL_{+}^{\rm Dual},
\end{equation}
where $\LL^{\rm CP}$ is the generator of the contact process 
defined in \eqref{contact generator}, $\LL^{\rm SSEP}$  is the 
generator of the exclusion process defined in 
\eqref{exclusion generator} and the action at the boundary 
on function $f: \Omega_N^{\rm dual} \to \mathbb{R} $ is
\begin{eqnarray}\label{Ldualab}
        \LL_-^{\rm Dual}f(\xi) 
        & = &(\alpha + \gamma) \xi_1\big[f(\xi^{1,0}) - f(\xi) \big] \\
  \label{Ldual+} 
       \LL_{+}^{\rm Dual}f(\xi)
        & = &(\delta + \beta) \xi_N\big[f(\xi^{N,N+1}) - f(\xi) \big].
 \end{eqnarray}
\end{thm}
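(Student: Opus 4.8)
The plan is to verify the duality relation \eqref{processduality} by checking the equivalent generator-level identity: for all $\e \in \Omega_N$ and $\xi \in \Omega_N^{\rm dual}$,
\begin{equation*}
\big(\LL^{\rm DCP} D(\cdot,\xi)\big)(\e) = \big(\LL^{\rm DCP,Dual} D(\e,\cdot)\big)(\xi),
\end{equation*}
where on the left the generator acts on the $\e$-variable and on the right on the $\xi$-variable. Since both generators decompose into a sum of local (bond and boundary) pieces, and the duality function \eqref{globaldualityfct}--\eqref{bulkduality} is a product over sites, it suffices to check the identity term by term: the two boundary pieces ($\LL_-$ vs.\ $\LL_-^{\rm Dual}$, and $\LL_+$ vs.\ $\LL_+^{\rm Dual}$) and each bulk bond piece ($\LL_{x,x+1}^{\rm CP}+\mathcal{D}\LL_{x,x+1}^{\rm SSEP}$ acting on $\e$ vs.\ the same bond operator acting on $\xi$). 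In matrix language this is exactly \eqref{matrixnotation}, $LD = D(L^{\rm dual})^T$, and because of the tensor/Kronecker structure \eqref{dcp-bulkmatrix} it reduces to a handful of $4\times 4$ (bulk bond) and $2\times 2$ (boundary) checks.

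First I would handle the bulk. Here the dual dynamics is identical to the original (same $\LL^{\rm CP}$, same $\mathcal{D}\LL^{\rm SSEP}$), so what must be shown is a \emph{self-duality} statement for a single bond with the local duality function $h(\e_x,\xi_x)=(1-\e_x)^{\xi_x}$, i.e.\ $h$ equals $1$ if $\xi_x=0$ and equals $1-\e_x$ if $\xi_x=1$. Concretely I would write $D(\e,\xi)=(1-\e_x)^{\xi_x}(1-\e_{x+1})^{\xi_{x+1}}\cdot(\text{factors not involving }x,x+1)$ and verify that the local $4\times4$ block $\ell$ (from \eqref{dcp-ellx}) satisfies $\ell\, h = h\, \ell^T$ where $h$ is the $4\times 4$ diagonal-in-structure matrix of values of $(1-\e_x)^{\xi_x}(1-\e_{x+1})^{\xi_{x+1}}$. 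This is the known self-duality of the closed DCP (Konno; Sudbury), so I would either cite it or present the short direct computation: the only nonzero entries of $h$ are indexed by pairs where $\e$ vanishes on the support of $\xi$, and one checks the four relevant linear combinations coming from the birth, death and exchange rates cancel appropriately. The exclusion part is the standard SSEP self-duality with duality function $(1-\e_x)(1-\e_{x+1})$ on occupied dual sites; the contact part uses that a death at $x$ (rate $\tfrac12\e_x$) and a birth at $x$ from $x+1$ (rate $\lambda(1-\e_x)\e_{x+1}$) together reproduce, on the factor $(1-\e_x)$, the action of the dual death/birth on $\xi_x$.

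Next I would do the boundaries, which is where the content of the theorem lies. Consider the left edge; by symmetry the right edge is identical. On the original side, $\LL_-$ acts on $\e_1$ via the $2\times2$ block $\ell^-=\begin{pmatrix}-\alpha&\alpha\\\gamma&-\gamma\end{pmatrix}$. The relevant factor of $D$ involving the left edge is $\big(\tfrac{\gamma}{\alpha+\gamma}\big)^{\xi_0}(1-\e_1)^{\xi_1}$. On the dual side, $\LL_-^{\rm Dual}$ moves a particle from site $1$ into the sink $0$ at rate $(\alpha+\gamma)\xi_1$, i.e.\ $\xi_1\to\xi_1-1$, $\xi_0\to\xi_0+1$. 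The key algebraic fact to verify is that applying $\ell^-$ to $(1-\e_1)^{\xi_1}$ in the $\e_1$ variable yields the same result as applying the dual absorption move to $\big(\tfrac{\gamma}{\alpha+\gamma}\big)^{\xi_0}(1-\e_1)^{\xi_1}$ in the $(\xi_0,\xi_1)$ variables. One checks the two cases $\xi_1=0$ (both sides vanish) and $\xi_1=1$: then $\LL_-$ contributes $\big[\alpha(1-\e_1)+\gamma\e_1\big]\big[(1-\e_1^1)-(1-\e_1)\big]=\big[\alpha(1-\e_1)+\gamma\e_1\big](2\e_1-1)$, which equals $-\alpha(1-\e_1)+\gamma\e_1=\gamma-(\alpha+\gamma)(1-\e_1)$; on the other side, absorbing the particle replaces $(1-\e_1)^{1}\to 1$ and multiplies by an extra $\tfrac{\gamma}{\alpha+\gamma}$, at rate $(\alpha+\gamma)$, giving $(\alpha+\gamma)\big[\tfrac{\gamma}{\alpha+\gamma}-(1-\e_1)\big]=\gamma-(\alpha+\gamma)(1-\e_1)$. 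The two match, which pins down both the rate $(\alpha+\gamma)$ and the boundary constant $\tfrac{\gamma}{\alpha+\gamma}$ in \eqref{globaldualityfct}. (I would also note the degenerate-looking terms at sites $1$ and $N$ coming from $\widetilde\ell_1,\widetilde\ell_N$ in \eqref{def:genCP}: these are the ``missing half'' of the death rate at the edge and are matched by the corresponding $\widetilde\ell$ pieces on the dual side in exactly the bulk self-duality manner, so no new computation is needed.)

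The main obstacle, and the only place requiring genuine care rather than bookkeeping, is getting the boundary matching exactly right: one must simultaneously fix the dual boundary \emph{rate} $(\alpha+\gamma)$ (note it is the \emph{sum}, not $\alpha$ or $\gamma$ alone) and the boundary \emph{weight} $\gamma/(\alpha+\gamma)$ in the duality function so that the two free parameters $\alpha,\gamma$ of a non-conservative, non-self-dual boundary get absorbed into one rate and one constant. A clean way to see why this must work is to observe that $\ell^-$ has a one-dimensional kernel (spanned by the equilibrium vector $(\gamma,\alpha)$ up to normalization) and eigenvalue $-(\alpha+\gamma)$ on the complementary eigenvector $(1,-1)$; the duality weight $(\gamma/(\alpha+\gamma))^{\xi_0}$ is precisely the pairing of the local duality vector $(1,0)$ (the values of $(1-\e_1)^{\xi_1}$ for $\xi_1\in\{1\}$ versus the constant for $\xi_1=0$) with that eigenstructure, and the dual rate is the nonzero eigenvalue. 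Once this identity and its right-edge mirror are established, summing the bulk and boundary identities over all bonds gives the full generator identity, and the duality \eqref{processduality} follows by the standard argument (Proposition/Definition \ref{def:duality}) since on the finite state space $\Omega_N$ the semigroup identity $\E_\e D(\e_t,\xi)=\E_\xi D(\e,\xi_t)$ is equivalent to the generator identity via $\frac{\rmd}{\rmd t}$ at $t=0$ together with the uniqueness of solutions of the associated linear ODE systems. Finally I would remark that the dual process indeed has state space \eqref{def:dual-state-space-DCP}: the counters $\xi_0,\xi_{N+1}\in\N_0$ only increase, confirming sites $0$ and $N+1$ are pure sinks, and the bulk $\{0,1\}^{\Lambda_N}$-part retains the exclusion constraint.
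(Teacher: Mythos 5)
Your proposal is correct and follows essentially the same route as the paper: reduce to the generator identity (Proposition \ref{Generatorapproach}), dispose of the bulk via the known closed-boundary self-duality \eqref{Duality} of the CP+SSEP with the factorized function $H$, and verify the left-boundary identity by the explicit computation $[\alpha(1-\e_1)+\gamma\e_1](2\e_1-1)=\gamma-(\alpha+\gamma)(1-\e_1)=(\alpha+\gamma)[c_- -(1-\e_1)]$, with the right boundary by symmetry — which is exactly the paper's argument. (Only your heuristic aside mixes up left and right eigenvectors of $\ell^-$: $(1,1)$ spans the right kernel and $(\gamma,\alpha)$ the left kernel, with $(1,-1)$ a left eigenvector for $-(\alpha+\gamma)$; this does not affect the proof.)
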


As pointed out in the introduction, 
both the contact and the exclusion processes 
with {\it closed} boundary conditions are self-dual with duality 
function \eqref{bulkduality}, 
see in particular \cite[Chapter 3, Section 4]{IPS} and 
 \cite[Chapter 5]{Konno1994}. This leads to a self-duality relation 
for the bulk of the DCP \cite{sudbury2000dual},  
namely we have that for any $\e,\xi\in \Omega_N$,
\begin{equation}\label{Duality}
    \big( \LL^{\rm CP}+ \LL^{\rm SSEP}\big)H(\e,.)(\xi) 
    = \big( \LL^{\rm CP}+ \LL^{\rm SSEP}\big)H(.,\xi)(\e) \,.
\end{equation}

\begin{rem}\label{parameters}

(a) It was noticed in \cite{FluctuationsSchutz} 
that for the symmetric simple exclusion process with either 
closed or periodic boundary conditions,  
the function 
\begin{equation}
    \widetilde H(\eta, \xi) 
    = \prod_{x \in \Lambda_N} 
    \left[  a_1 + a_2 \eta_{x} \right]^{ a_3+a_4 \xi_{x}} , \quad
a_i\in \R, \, i\in\{1,2,3,4\} 
\label{bulkdualitySSEP}
\end{equation}
is a self-duality function for any choice of the parameters $a_i$. The case $a_1=1$, 
$a_2=-1$, $a_3=0$ and $a_4=1$ yields $\widetilde H= H$, the self-duality function 
for the contact process.

(b) The bulk duality functions \eqref{bulkduality} and \eqref{bulkdualitySSEP} 
have a locally factorized form, namely
\begin{equation*}
    \mathrm{G}(\eta,\xi) = \prod_{x \in \Lambda_N} \mathrm{g} (\eta_x,\xi_x)
    \end{equation*}
which corresponds to a duality matrix of the form $G^{\otimes N}$, 
for a local duality matrix
\begin{equation}\label{genericG}
G = \left(\ba{cc}
a & b \\
c & d
\ea\right),
\end{equation}
where for the symmetric simple exclusion process the matrix elements 
are arbitrary, while 
for the contact process we have  $a=b=c=1,\,d=0$. 
\end{rem}

\begin{figure}
        \centering
\includegraphics[scale=0.2]{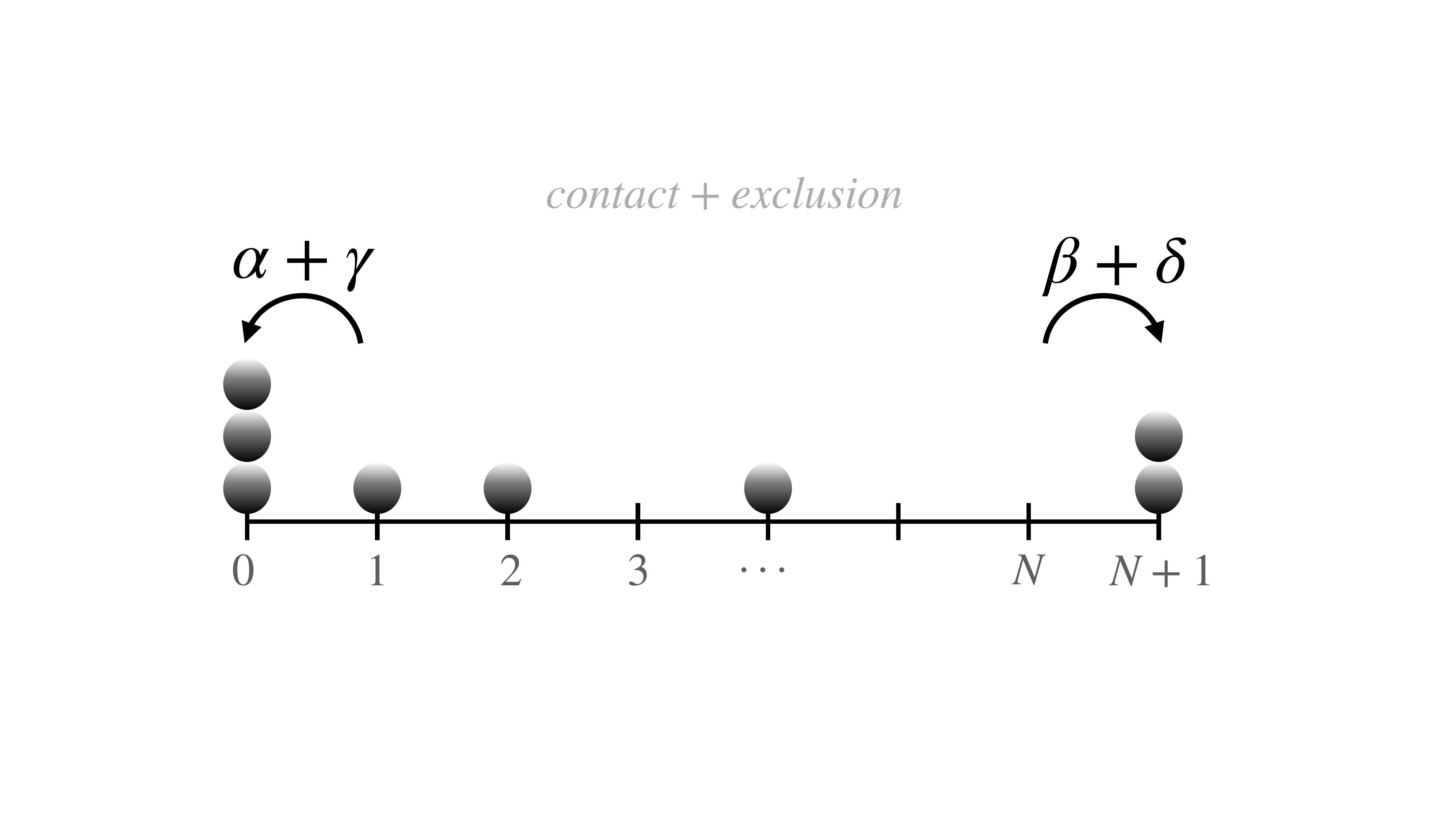}
        \caption{Dual diffusive contact process with sinks at sites $0$ and $N+1$.}
    \end{figure}

From now on the letter $\e\in\Omega_N$ will be relative to a DCP 
and $\xi\in\Omega_N^{\rm dual}$ to the dual process introduced in Theorem \ref{Dual}.

\subsection{Correlation functions via duality}\label{subsec:corfu}
In order to provide a better understanding of the invariant measure $\nu^{\rm DCP}$ 
of the DCP with absorbing boundaries, we study its correlation functions 
thanks to the duality relation established in Theorem \ref{Dual}. The dual 
process is seemingly simpler, as it almost surely becomes extinct. We are 
thus able to express the correlation functions of the original process in 
terms of absorption probabilities of the dual process. This allows to get 
an upper bound for the one-point function, see subsection 
\ref{subsubsection_one_point}. However, the explicit computation of 
correlation functions relies on solving large systems of linear equations, 
see subsection \ref{subsec:correlation-exact}, and it is not clear whether 
they are simpler to solve than the ones obtained in Appendix \ref{Konno} 
through a more standard approach. \medskip

In the following, the one-point function is the expectation 
$\rho_1^{\rm DCP}(y) 
  := \mathbb{E}_{\nu^{\rm DCP}}[\e_y]$  of the occupation number $\e_y$
w.r.t. the invariant measure of the DCP. For $2\leq \ell \leq N$, 
the $\ell$-point correlation function of 
$\nu^{\rm DCP}$ is the function $\rho_{\ell}^{\rm DCP}$ defined by 
\begin{equation}\label{ell_point_corr_def}
    \forall \   \ x_1 <... < x_{\ell} \in \Lambda_N,~ \rho_{\ell}^{\rm DCP}
    (x_1,...x_{\ell}) := \mathbb{E}_{\nu^{\rm DCP}}
    \big[\e_{x_1}...\e_{x_{\ell}} \big].
\end{equation}
Introduce the extinction time $\tau_N$ of the DCP $(\xi(t))_{t\geq 0}$ 
with purely absorbing boundaries:
\begin{equation}\label{def:tauN}
    \tau_N:= \inf \big\{t\geq 0,~ \forall x\in \{1,...,N\},~ \xi_x(t) =0\big\}. 
\end{equation}
Also, introduce the total number of particles absorbed on the left and 
right hand side boundary:
\begin{equation}\label{def:absorbedparticles}
\xi_0(\infty) := \underset{t\rightarrow \infty}{\lim}\xi_{0}(t),
~ ~ \text{and}~ ~ \xi_{N+1}(\infty) 
:=\underset{t\rightarrow \infty}{\lim}\xi_{N+1}(t) , 
\end{equation}
where the limits are almost sure. As the process almost surely becomes extinct, 
$\tau_N$, $\xi_0(\infty) $ and $\xi_{N+1}(\infty)$ are almost surely finite and 
$\xi_0(\infty) = \xi_0(\tau_N)$, $\xi_{N+1}(\infty) = \xi_{N+1}(\tau_N)$ 
almost surely.
To state and prove the results it is convenient to denote
\begin{equation}\label{def:c0-cN+1}
c_{-}:= \frac{\gamma}{\alpha + \gamma}~ ~ 
\text{and}~ ~ c_{+} := \frac{\beta}{\beta + \delta},
\end{equation}
which are related to the left (resp. right) reservoir  
densities $1-c_-$ (resp. $1-c_+$).

\subsubsection{One-point function}\label{subsubsection_one_point}

For $y\in \Lambda_N$, denote by $\delta_y$ the element of $\Omega_N^{\rm dual}$ 
where there is a particle in site $y \in \Lambda_N$ and all other sites 
$x \neq y $ are empty. 

\begin{prop}\label{Proposition_one_point_DCP} 
The one-point 
function of $\nu^{\rm DCP}$ is given by
\begin{equation}\label{one_point_corr}
  \forall \ \ y \in \Lambda_N,~ \rho_1^{\rm DCP}(y) 
  = 1 - \mathbb{E}_{\delta_y}\big[c_{-}^{\xi_0(\infty)}c_{+}^{\xi_{N+1}(\infty)} \big] 
  = 1 - \mathbb{E}_{\delta_y}\big[c_{-}^{\xi_0(\tau_N)}
  c_{+}^{\xi_{N+1}(\tau_N)} \big].
\end{equation}
\end{prop}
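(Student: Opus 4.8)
The plan is to apply the duality relation from Theorem \ref{Dual} with a judiciously chosen dual initial configuration, namely the single-particle state $\delta_y$, and then pass to the limit $t\to\infty$ on both sides.

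First I would write down the duality identity \eqref{processduality} in the form $\mathbb{E}_{\eta}\big[D(\eta_t,\delta_y)\big] = \mathbb{E}_{\delta_y}\big[D(\eta,\xi_t)\big]$ for an arbitrary initial configuration $\eta\in\Omega_N$ of the DCP. For the dual starting configuration $\delta_y$ we have $\xi_0=\xi_{N+1}=0$ and $A(\delta_y)=\{y\}$, so the left-hand side reduces to $D(\eta_t,\delta_y)=1-(\eta_t)_y$, whose expectation is $1-\mathbb{E}_{\eta}[(\eta_t)_y]$. On the right-hand side, $D(\eta,\xi_t)=c_-^{\,\xi_0(t)}\,H(\eta,\xi_t)\,c_+^{\,\xi_{N+1}(t)}$ with $H(\eta,\xi_t)=\prod_{x\in A(\xi_t)}(1-\eta_x)$.

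Next I would choose $\eta$ to be \emph{the empty configuration} $\mathbf{0}$ (all sites $0$), or rather take expectations in a way that isolates the boundary factors — actually the cleanest route is to note that, since the DCP is irreducible on the finite state space $\Omega_N$, as $t\to\infty$ the law of $\eta_t$ under $\mathbb{P}_\eta$ converges to $\nu^{\rm DCP}$ for every initial $\eta$; hence $\lim_{t\to\infty}\big(1-\mathbb{E}_\eta[(\eta_t)_y]\big) = 1-\rho_1^{\rm DCP}(y)$, independently of $\eta$. On the dual side I would choose $\eta=\mathbf{0}$ so that $H(\mathbf{0},\xi_t)=1$ identically (the product $\prod_{x\in A(\xi_t)}(1-0)=1$), killing the bulk dependence entirely; then the right-hand side is exactly $\mathbb{E}_{\delta_y}\big[c_-^{\,\xi_0(t)}c_+^{\,\xi_{N+1}(t)}\big]$. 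Passing to the limit and using that $\xi_0(t)\uparrow\xi_0(\infty)=\xi_0(\tau_N)$ and $\xi_{N+1}(t)\uparrow\xi_{N+1}(\infty)=\xi_{N+1}(\tau_N)$ almost surely (as recorded just before the proposition), together with dominated convergence since $0<c_\pm\le 1$ so the integrand is bounded by $1$, gives the claimed identity $\rho_1^{\rm DCP}(y)=1-\mathbb{E}_{\delta_y}\big[c_-^{\,\xi_0(\infty)}c_+^{\,\xi_{N+1}(\infty)}\big]$, and the second equality in \eqref{one_point_corr} is then immediate from $\xi_j(\infty)=\xi_j(\tau_N)$.

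The one genuine point requiring care — the ``hard'' part, such as it is — is the interchange of limit and expectation on the dual side and the justification that the almost-sure limits $\xi_0(\infty),\xi_{N+1}(\infty)$ exist and are finite; this rests on the dual process almost surely reaching extinction in the bulk (so that no further particles are ever fed into the sinks after $\tau_N$), which in turn follows because starting from a single dual particle the absorbing contact process with sinks is a finite-range pure-death-competing-with-birth chain that hits the all-empty bulk state in finite time a.s. — I would either cite this as the standard extinction property of the (sub-critical-at-the-boundary) dual or give a short stochastic-domination argument. Everything else is the bounded convergence theorem applied with the uniform bound $c_-^{\,\xi_0(t)}c_+^{\,\xi_{N+1}(t)}\le 1$.
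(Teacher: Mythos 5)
Your proof is correct, and it reaches the identity by a genuinely different (if closely related) route from the paper's. The paper starts the original process from the invariant measure: it integrates the duality relation against $\nu^{\rm DCP}$, so that by invariance the left-hand side $\int \mathbb{E}_\eta\big[D(\eta(t),\delta_y)\big]\,\nu^{\rm DCP}(d\eta)=1-\rho_1^{\rm DCP}(y)$ is constant in $t$, and then lets $t\to\infty$ on the dual side, where a.s.\ extinction of the dual bulk forces $H(\eta,\xi(t))\to 1$, removing the $\eta$-dependence so that the $\nu^{\rm DCP}$-integral collapses to $\mathbb{E}_{\delta_y}\big[c_-^{\xi_0(\infty)}c_+^{\xi_{N+1}(\infty)}\big]$. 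You instead fix the deterministic initial state $\eta=\mathbf{0}$, which makes the bulk factor $H(\mathbf{0},\xi_t)\equiv 1$ for every $t$ (no need to invoke extinction to kill it), and recover $\rho_1^{\rm DCP}(y)$ on the original side via convergence to equilibrium of the finite irreducible chain rather than via stationarity. The trade-off: the paper needs only the definition of invariance plus dual extinction, while you need irreducibility and the standard ergodic convergence of a finite-state chain (available here, since the paper notes the DCP is irreducible with $\alpha,\beta,\gamma,\delta>0$), in exchange for which your dual-side limit is a plain bounded-convergence step for the boundary counters alone. Both arguments ultimately rest on the a.s.\ finiteness of $\xi_0(\infty)$ and $\xi_{N+1}(\infty)$ (equivalently $\tau_N<\infty$ a.s.), which the paper records just before the proposition; you correctly isolate this as the one point needing justification, and your sketch (the dual bulk is a finite-state chain whose empty configuration is absorbing and accessible from every state, hence hit a.s.\ in finite time, after which no further particles reach the sinks) is the right argument.
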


Although the dual process is seemingly simpler than the original one, 
 it remains a non-conservative process 
and the total number of particles absorbed by the reservoirs 
can reach arbitrary values. 
Therefore, we are not able to provide an explicit expression of the right hand side 
term in \eqref{one_point_corr}, contrary to the case of conservative particle systems 
where exact formulas are available. We refer to section 2.3 of \cite{frassek2022exact} 
and Theorem 2.2 of \cite{integrable}, where a closed formula 
for the $n$ points correlation 
is found in the non-equilibrium steady state of two models 
with open boundary and the same purely absorbing dual.

However, note that the dual process $(\xi(t))_{t\geq 0}$ either dies out 
before any particle lying in the bulk has had time to reach the boundary, or, 
it dies out and at least one particle has been absorbed by the boundary reservoirs. 
Thanks to this observation, the following upper bound holds:
\begin{lem}\label{lemme}
    For $y\in \Lambda_N$, denote by $A_y(\infty)$ the following event: 
   \[  A_y(\infty) 
   =  \Big(\xi(0)= \delta_y,~  c_{-}^{\xi_0(\infty)}c_{+}^{\xi_{N+1}(\infty)} 
   < 1 \Big),\]
    that is, starting from a single dual particle at site $y$, 
    at least one particle is absorbed by the boundary of the system. Then,
    \begin{equation} \label{onebound}
        \rho_1^{\rm DCP}(y) \leq \mathbb{P}_{\delta_y}\big[A_y(\infty) \big].
    \end{equation}
\end{lem}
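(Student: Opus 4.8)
The plan is to start from the exact identity \eqref{one_point_corr} in Proposition \ref{Proposition_one_point_DCP}, namely
\begin{equation*}
\rho_1^{\rm DCP}(y) = 1 - \mathbb{E}_{\delta_y}\big[c_{-}^{\xi_0(\infty)}c_{+}^{\xi_{N+1}(\infty)}\big],
\end{equation*}
and bound the expectation from below by restricting to the complement of the event $A_y(\infty)$. Since $\alpha,\gamma,\beta,\delta>0$ we have $c_-=\gamma/(\alpha+\gamma)\in(0,1)$ and $c_+=\beta/(\beta+\delta)\in(0,1)$, so the random variable $c_{-}^{\xi_0(\infty)}c_{+}^{\xi_{N+1}(\infty)}$ always lies in $(0,1]$, and it equals $1$ exactly when $\xi_0(\infty)=\xi_{N+1}(\infty)=0$, i.e. when no dual particle is ever absorbed at either sink. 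Thus the event $\{c_{-}^{\xi_0(\infty)}c_{+}^{\xi_{N+1}(\infty)}<1\}$ (with initial condition $\xi(0)=\delta_y$) is precisely $A_y(\infty)$, and on its complement the integrand is identically $1$.

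The key step is then the elementary bound: writing $Z:=c_{-}^{\xi_0(\infty)}c_{+}^{\xi_{N+1}(\infty)}$,
\begin{equation*}
\mathbb{E}_{\delta_y}[Z] = \mathbb{E}_{\delta_y}\big[Z\,\mathds{1}_{A_y(\infty)}\big] + \mathbb{E}_{\delta_y}\big[Z\,\mathds{1}_{A_y(\infty)^c}\big] \geq 0 + \mathbb{P}_{\delta_y}\big[A_y(\infty)^c\big] = 1 - \mathbb{P}_{\delta_y}\big[A_y(\infty)\big],
\end{equation*}
where we used $Z\geq 0$ on $A_y(\infty)$ and $Z=1$ on $A_y(\infty)^c$. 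Substituting into \eqref{one_point_corr} gives
\begin{equation*}
\rho_1^{\rm DCP}(y) = 1 - \mathbb{E}_{\delta_y}[Z] \leq 1 - \big(1 - \mathbb{P}_{\delta_y}[A_y(\infty)]\big) = \mathbb{P}_{\delta_y}[A_y(\infty)],
\end{equation*}
which is exactly \eqref{onebound}. The almost-sure finiteness of $\xi_0(\infty)$ and $\xi_{N+1}(\infty)$ (guaranteed, as noted in the text, by a.s. extinction of the dual process) ensures $Z$ is a well-defined $(0,1]$-valued random variable, so no measurability or integrability subtleties arise.

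There is essentially no obstacle here: the only thing to be careful about is the correct identification of the event on which the integrand drops below $1$, which hinges on $c_-,c_+<1$ (strict), i.e. on $\alpha>0$ and $\delta>0$ respectively. If one of the insertion rates vanishes the corresponding factor is always $1$ and the argument still goes through with $A_y(\infty)$ referring only to absorption at the active sink; for the stated regime with all four boundary rates positive, the proof is the one-line truncation above. The probabilistic ``interpretation'' — that the dual either dies before any particle reaches the boundary, or dies having sent at least one particle to a sink — is just the dichotomy $A_y(\infty)\cup A_y(\infty)^c$ and requires no further argument.
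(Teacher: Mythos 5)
Your proof is correct and follows essentially the same route as the paper: both start from the identity of Proposition \ref{Proposition_one_point_DCP}, split the expectation of $c_{-}^{\xi_0(\infty)}c_{+}^{\xi_{N+1}(\infty)}$ over $A_y(\infty)$ and its complement (where the integrand equals $1$), and drop the nonnegative contribution on $A_y(\infty)$. Nothing is missing.
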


\begin{rem}\label{rem:interest}
    This inequality is of interest if the upper bound is small, that is, 
   that starting from a particle at $y$, 
    it is unlikely that the process reaches the boundary before 
    it becomes extinct. Heuristically, if the time of extinction $\tau_N$ 
    of the process is small, corresponding to a small birth rate $\lambda$,
    then, the particle should not have time to reach the boundary. 
    Precisely, an estimate of $\tau_N$ is known in the case of a 
    subcritical contact process on $\Lambda_N$ (without exclusion), 
    and we refer to \cite[Part 1, Theorem 3.3]{Liggett2}.
\end{rem}

\subsubsection{Higher order correlation functions}\label{subsec:correlation-exact}

The idea of relating the $\ell$-point correlations using the dual absorption 
probabilities goes back to  \cite{giardina2007duality} for a model of 
stochastic diffusion of energy. In the same spirit as in Proposition 2 
in \cite{giardina2007duality} we can show the following results for the DCP.
Given $2\leq \ell \leq N$ and sites $1\leq x_1<...<x_{\ell}\leq N$, 
denote by  $ \delta_{x_1,x_2,...,x_{\ell}}$, the element of 
$\Omega_N^{\rm dual}$ where there is a particle at sites $x_1,...,x_{\ell}$, 
and none elsewhere.
\begin{prop}\label{any_point_label} 
For any $1\leq \ell \leq N$ 
and any $1\leq x_1<...<x_{\ell}\leq N$,
\begin{equation}\label{lcorrelations3}
\begin{split}
        \mathbb{E}_{\nu^{\rm DCP}}
  \Big[ \prod_{j=1}^{\ell} \left( 1 -\eta_{x_j} \right) \Big] 
  &= \sum_{k=0}^{\ell}(-1)^k \sum_{1\leq i_1<...<i_k\leq \ell}
  \rho_k^{\rm DCP}(x_{i_1},...,x_{i_k}) \\
        &=\sum_{m,n\geq 0} \mathbb{P}_{\delta_{x_1,x_2,...,x_{\ell}}}\Big[\xi_0(\infty)=m, \xi_N(\infty)=n \Big] c_{-}^m c_{+}^n.
\end{split}
\end{equation} 
\end{prop}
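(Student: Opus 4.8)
The plan is to prove \eqref{lcorrelations3} in two parts. The first equality is purely combinatorial: expanding the product $\prod_{j=1}^{\ell}(1-\eta_{x_j})$ gives $\sum_{k=0}^{\ell}(-1)^k \sum_{1\le i_1<\dots<i_k\le \ell}\eta_{x_{i_1}}\cdots\eta_{x_{i_k}}$, and taking the expectation under $\nu^{\rm DCP}$ and using the definitions \eqref{ell_point_corr_def} of $\rho_k^{\rm DCP}$ (with the convention $\rho_0^{\rm DCP}=1$) yields the claimed alternating sum. This step is routine and requires no dynamics.

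For the second equality, the idea is to apply the stationary duality identity. First I would integrate the duality relation \eqref{processduality}, with duality function $D$ from \eqref{globaldualityfct}, against the invariant measure $\nu^{\rm DCP}$ in the first argument: since $\nu^{\rm DCP}$ is invariant, $\mathbb{E}_{\nu^{\rm DCP}}\mathbb{E}_{\eta}D(\eta_t,\xi) = \mathbb{E}_{\nu^{\rm DCP}}D(\eta,\xi)$ for every $\xi$ and $t$, while the right-hand side of duality gives $\mathbb{E}_{\xi}\,\mathbb{E}_{\nu^{\rm DCP}}D(\eta,\xi_t)$. Hence $\mathbb{E}_{\nu^{\rm DCP}}D(\eta,\xi) = \mathbb{E}_{\xi}\big[\mathbb{E}_{\nu^{\rm DCP}}D(\eta,\xi_t)\big]$ for all $t\ge 0$. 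Now I specialize to the initial dual configuration $\xi = \delta_{x_1,\dots,x_{\ell}}$, for which $\xi_0 = \xi_{N+1}=0$ and $A(\xi)=\{x_1,\dots,x_\ell\}$, so that $D(\eta,\delta_{x_1,\dots,x_\ell}) = \prod_{j=1}^{\ell}(1-\eta_{x_j}) = H(\eta,\delta_{x_1,\dots,x_\ell})$; taking $\nu^{\rm DCP}$-expectation recovers the left-hand side of \eqref{lcorrelations3}.

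The remaining work is to pass to the limit $t\to\infty$ on the right-hand side $\mathbb{E}_{\delta_{x_1,\dots,x_\ell}}\big[\mathbb{E}_{\nu^{\rm DCP}}D(\eta,\xi_t)\big]$. As recalled before the statement, the dual process with purely absorbing boundaries becomes extinct almost surely: $\xi_x(t)\to 0$ for all $x\in\Lambda_N$ as $t\to\infty$, and $\xi_0(t)\uparrow\xi_0(\infty)$, $\xi_{N+1}(t)\uparrow\xi_{N+1}(\infty)$ almost surely with finite limits. On the event that the bulk is empty, $H(\eta,\xi_t)=\prod_{x\in A(\xi_t)}(1-\eta_x)$ is an empty product equal to $1$, so $D(\eta,\xi_t)\to c_-^{\xi_0(\infty)}c_+^{\xi_{N+1}(\infty)}$ pointwise; since $0<c_-,c_+\le 1$ and $0\le H\le 1$, the integrand is bounded by $1$, and dominated convergence (first in $\eta$ under $\nu^{\rm DCP}$, then in the dual path) gives
\begin{equation*}
\mathbb{E}_{\nu^{\rm DCP}}\Big[\prod_{j=1}^{\ell}(1-\eta_{x_j})\Big]
= \mathbb{E}_{\delta_{x_1,\dots,x_\ell}}\big[c_-^{\xi_0(\infty)}c_+^{\xi_{N+1}(\infty)}\big]
= \sum_{m,n\ge 0}\mathbb{P}_{\delta_{x_1,\dots,x_\ell}}\big[\xi_0(\infty)=m,\ \xi_{N+1}(\infty)=n\big]\,c_-^m c_+^n,
\end{equation*}
where the last equality is just the definition of the expectation of the discrete random variable $(\xi_0(\infty),\xi_{N+1}(\infty))$. (Note $\xi_N(\infty)$ in the printed statement should read $\xi_{N+1}(\infty)$.) The main obstacle is the interchange of limits and the justification that the extinction time $\tau_N$ and the absorption counts are almost surely finite; this is where the a.s. extinction of the dual, already established/quoted in the excerpt, does the essential work, after which only a uniform-boundedness argument for dominated convergence remains.
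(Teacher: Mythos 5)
Your proposal is correct and follows essentially the same route as the paper: expand the product for the first equality, then apply the stationary duality relation with dual initial configuration $\delta_{x_1,\dots,x_\ell}$ and let $t\to\infty$ using almost sure extinction of the purely absorbing dual (the paper does this by invoking its Proposition \ref{Proposition_one_point_DCP}, restricting to $\delta=\beta=0$ only for notational simplicity). Your observation that $\xi_N(\infty)$ in the displayed statement should read $\xi_{N+1}(\infty)$ is also correct.
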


If we consider a small bulk, we can explicitly compute the above
 absorption probabilities.
 The idea is to perform a conditioning on the first step of the dual dynamics. 
We leave to Appendix \ref{appN2} the explicit computations for $N=1$ and $N=2$.

For $N$ sites notice that there are 
\[\displaystyle \sum_{k=1}^{N} \binom{N}{k} = 2^N -1\] 
nontrivial initial dual configurations. 
We define $M_N$ to be the $2^N -1$ square matrix given by 
the condition on the first steps starting from the dual initial configuration.
Then our goal is to show that $M_N$ is invertible so that all the absorption 
probabilities -- and so the corresponding correlation functions -- are determined
as solutions of a linear system. 
For $k\geq 0$, denote by 
$x_i^k = \mathbb{P}_{\xi^{i}}\big[\xi_0(\infty) = k \big]$  
the absorption probabilities of $k$ dual particles starting from the initial 
configuration $\xi^{i}$, $1 \leq i \leq 2^N -1$ that we want to find.
The order of these unknown variables 
is chosen in an arbitrary way and one option to systematically ordering 
them is the following, 
defined recursively for $N$ sites. There are $2^N -1$ total number of non trivial 
configurations: the first $2^{N-1}$ unknown variables have a particle at site $1$ 
and the rest is completed 
by a configuration from the $N-1$ th ordering. 
The $2^{N-1}-1$ coordinates left are the ones 
such that site $1$ is empty and the rest is completed 
by a configuration from the $N-1$ th ordering.

More explicitly, for $N=2$ sites we have
$\xi^1= \delta_1$, $\xi^2= \delta_{1,2} $, $\xi^3= \delta_2$ 
which is the order we used, see Fig. \ref{Ordering}. 
For $N=3$, this rule would give us the ordering
$\xi^1= \delta_1$, 
    $\xi^2= \delta_{1,2}$, $\xi^3= \delta_{1,2,3}$, $\xi^4
    = \delta_{1,3}$, $\xi^5= \delta_2$, $\xi^6
    = \delta_{2,3}$, $\xi^7= \delta_{3}$ and so on. 

 \begin{figure}
        \centering
        \includegraphics[scale=0.3]{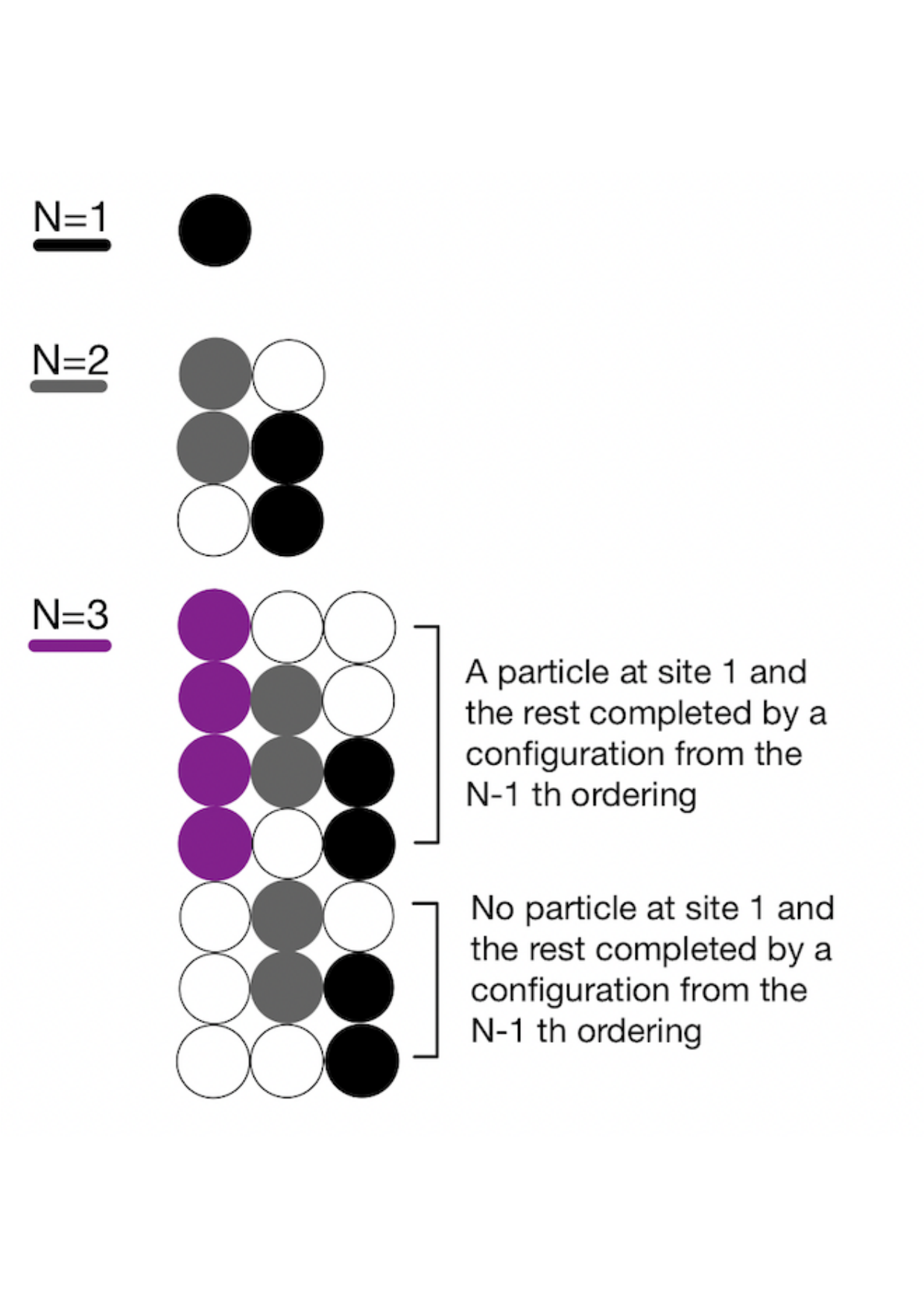}
        \caption{Ordering of configurations}\label{Ordering}
    \end{figure}

Also, for $\xi\in \Omega_N^{\rm dual}$, introduce $\mathcal{A}(\xi)$ 
the set of configurations containing at least one particle and 
resulting from $\xi$ after the jump, birth or death of a particle in $\e$ 
(but not an absorption in the left side reservoir). 
Denote by $(p(\xi^{i},\xi^{j}))_{1\leq i,j \leq n}$ 
the transition probabilities between configurations. 
Then for any $1\leq i \leq 2^{N}-1$ and for any $k\geq 2$, 
the following recurrence relation holds:
\begin{equation*}
    x_i^k = \sum_{\underset{\xi^j\in \mathcal{A}(\xi^{i}) }{j=1}}^{2^N-1} 
    p(\xi^{i},\xi^j)x_j^k
+ \sum_{\underset{\xi^j\in \Lambda_N \setminus \mathcal{A}(\xi^{i}) }{j=1}}^{2^N-1} p(\xi^{i},\xi^j)x_j^{k-1}.
\end{equation*}
For $k=0,1$ the recurrence relation remains the same modulo some changes 
on the second sum in the right hand side. Therefore, denoting $C_k$ 
the column vector $(x_i^k)_{1\leq i\leq 2^N-1}$, for any $k\geq 0$, 
there is a matrix of size $2^N-1$ of the form 
$M_{N}=I_{2^N-1}-P$ such that 
\[M_{N} C_k = R_k, \]
 where $R_k$ is a column vector involving some constant terms for $k=0,1$ 
 and terms from $C_{k-1}$ for $k\geq 2$, and where 
 \[P= \big(p(\xi^{i},\xi^{j})\mathds{1}_{\xi^j\in \mathcal{A}(\xi^{i}) }\big)_{1\leq i,j\leq 2^N-1}.\]
By the Perron-Frobenius Theorem, as $P$ is the transition matrix 
of an irreducible dynamics, one has that $M_{N}$ is invertible and
\begin{equation*}
    M^{-1}_{N} = \sum_{k\geq 0} P^k \;. 
\end{equation*}

\subsubsection{Fast stirring limit}\label{subsec:faststirring}
It is interesting to consider the fast stirring limit, that is, 
$\mathcal{D} \to \infty$ for the DCP model. Unlike in hydrodynamics, here, 
we discuss the setting where time is not rescaled when performing this limit.\\ 
When $\mathcal{D} \to \infty$, the DCP reduces to a birth-death chain 
on the lattice $\{0,1,\dots,N\}$. This can be seen by considering a configuration 
$\eta(t)$ with $n(t)$ particles, where 
$0\leq n(t) \leq N$, and where the process $n(t)$ is defined in a way 
that is analogous to the derivation
of the reaction term in the fast stirring limit of a reaction-diffusion processes,
as discussed in \cite{demasietal,durneu}. More precisely, between any attempt 
to create or annihilate a 
particle in the bulk or at the boundaries,
the process is just a SSEP with infinite rate, which has the uniform measure 
as its stationary distribution. This means that in between any 
creation/annihilation event of a particle (death, infection, and reservoir 
events), any configuration $\eta(t)$ with $n(t)$ particles, moves uniformly 
to another configuration $\eta'(t)$ with $n'(t)$ particles,
so that on the time scale of the creation and annihilation processes, 
the following transitions occur: any given 
configuration with $n(t)$ particles will turn uniformly
into another one with $n(t)\pm 1$ particles. The invariant measure of the
process in this limit thus becomes a convex combination of uniform measures
on configurations with $n$ particles, where the weights are given by the
stationary weights of the birth-death chain that describes the transitions
between configurations of $n$ and $n\pm 1$ particles.\medskip

\noindent
Specifically, the rate at which a particle dies in the closed DCP 
at a given site  (transition $n(t)\to n(t)-1$), is $n/N$ (which is the 
probability of finding an occupied 
site in the uniform distribution), and since there are $N$ sites 
where particles can die, the rate of transition
is $n$. In the open DCP, particles can also die at the boundary sites 
with rates $\gamma$, and $\beta$ respectively. Hence the transition rate 
from a configuration with $n(t)$ particles to a configuration with $n(t)-1$ 
particles is $n+(\beta+\gamma)n/N$. Likewise, the rate at which a particle 
is created in the DCP by infection at a bond, is given by the probability 
of finding an occupied site with a vacant neighbor in the uniform distribution 
(which is $n(N-n)/(N(N-1))$ times the number of 
bonds (which is $N-1$) times $\lambda$ 
(since at each bond the infection
can occur with rate $\lambda/2$ from a particle 
on the right or on the left site 
of the bond). In the open DCP particles can also be created at the boundary sites 
with rates
$\alpha$, and $\delta$ respectively. Hence the transition rate 
from a configuration with $n(t)$ particles to a configuration with $n(t)-1$ 
particles is $\lambda n (1-n/N)  + (\alpha+\delta) (1-n/N)$.
Thus, the DCP degenerates into a birth-death chain $n(t)$
with the following transition rates:
\begin{equation}\label{faststirringrates}
    n \rightarrow n-1,~\text{at rate }~n [1 + (\beta+\gamma)/N],
    ~~n \rightarrow n+1,~\text{at rate }~ [\lambda n 
     + (\alpha+\delta)] (1-n/N).
\end{equation}
Without reservoirs, $n=0$ is an absorbing state, but not when $\alpha\neq 0$ 
or $\delta\neq 0$. 

Since the arguments that lead to these rates are
analogous to those of \cite{demasietal,durneu} we refrain from
providing a formal proof. For the stationary probabilities and general properties
of this birth-death chain
we refer to \cite{Karlin1957}. 
Again, we point to Appendix \ref{appN2} where we compute explicitly 
the absorption probabilities for $N=2$ under the fast stirring limit.

\subsection{Proofs for Section \ref{dcp section}}\label{subsec:proofs-dcp} 
\subsubsection{Proofs for Section \ref{subsec:dcp-duality}}
\label{subsubsec:proofs-dcp-dual}

First, notice that in our case Definition \ref{def:duality} is equivalent 
to the following relation involving the 
corresponding Markov generators of the processes:

\begin{prop}\label{Generatorapproach}{\rm(\cite[Theorem 3.42]{liggett2010continuous})}
Given $X$ a process with generator ${\cal L}$ and $Y$, with generator ${\cal L}^{\rm dual}$, then $X$ and $Y$ are dual with local duality function $D$ if
  \begin{equation}\label{gendualfirstdef}
\left({\cal L} D(\cdot, x)\right) (y)
=\left({\cal L}^{\rm dual} D(y, \cdot) \right)(x) \;.
\end{equation}  
for all $x \in \Omega$ and $y \in \Omega^{\rm dual}$.
\end{prop}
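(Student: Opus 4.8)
\medskip\noindent\textbf{Proof plan.}
This is a classical fact (it is Theorem 3.42 in \cite{liggett2010continuous}), and the plan is to recall the standard interpolation argument between the two Markov semigroups. I would write $S(s)=e^{s\LL}$ and $S^{\rm dual}(s)=e^{s\LL^{\rm dual}}$ for the semigroups of $X$ and $Y$, so that $\E_x D(X_t,y)=\big(S(t)D(\cdot,y)\big)(x)$ and $\E_y D(x,Y_t)=\big(S^{\rm dual}(t)D(x,\cdot)\big)(y)$, and use the Kolmogorov identity $\tfrac{d}{ds}S(s)f=S(s)\LL f=\LL S(s)f$ for $f$ in the domain of $\LL$ (and likewise for $S^{\rm dual}$). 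In the setting in which the proposition is applied here, namely finite $\Omega$ and $\Omega^{\rm dual}$, these semigroups are just matrix exponentials, every function lies in the domain, and the manipulations below are elementary; in the general countable-state case one invokes the integrability/growth hypotheses behind Theorem 3.42 to license the same steps.

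Next I would fix $t\ge 0$ and $(x,y)\in\Omega\times\Omega^{\rm dual}$, run $X$ from $x$ and, independently, $Y$ from $y$, and introduce the interpolating quantity
\[
G(s):=\E_x\E_y\big[D(X_s,Y_{t-s})\big]=\big([S(s)\otimes S^{\rm dual}(t-s)]\,D\big)(x,y),\qquad 0\le s\le t,
\]
where $S(s)$ acts on the first argument of $D$ and $S^{\rm dual}(t-s)$ on the second. Differentiating in $s$ (product rule for the two one-parameter semigroups, the factor $t-s$ supplying a minus sign) yields
\[
G'(s)=\big([S(s)\otimes S^{\rm dual}(t-s)]\,\Delta\big)(x,y),\qquad \Delta(x',y'):=\big(\LL D(\cdot,y')\big)(x')-\big(\LL^{\rm dual}D(x',\cdot)\big)(y').
\]
By the hypothesis \eqref{gendualfirstdef}, $\Delta\equiv 0$ on $\Omega\times\Omega^{\rm dual}$, hence $G'(s)=0$ for every $s\in(0,t)$ and $G$ is constant on $[0,t]$.

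Finally I would evaluate at the endpoints: $G(0)=\big(S^{\rm dual}(t)D(x,\cdot)\big)(y)=\E_y D(x,Y_t)$ and $G(t)=\big(S(t)D(\cdot,y)\big)(x)=\E_x D(X_t,y)$, so $G(0)=G(t)$ is precisely the duality relation \eqref{processduality}; since $t$ and $(x,y)$ were arbitrary, $X$ and $Y$ are dual with duality function $D$. (For the ``equivalence'' alluded to before the statement, the converse is immediate: differentiating \eqref{processduality} at $s=0^+$ returns \eqref{gendualfirstdef}.) The only point requiring genuine care is analytic rather than algebraic — namely the differentiability of $G$ and the interchange of the generators with the product semigroup, which need $D(\cdot,y)$ and $D(x,\cdot)$ to lie in the respective domains together with suitable integrability; this is automatic when $\Omega$ and $\Omega^{\rm dual}$ are finite (the case relevant here) and is otherwise exactly what the hypotheses of Theorem 3.42 in \cite{liggett2010continuous} guarantee.
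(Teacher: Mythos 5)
Your interpolation argument is correct and is exactly the standard proof of this fact; the paper itself gives no proof, simply citing \cite[Theorem 3.42]{liggett2010continuous}, and your argument is the one underlying that citation. The only point of care — differentiability of $G$ and commuting the generators through the semigroups — you address correctly (trivial for finite state spaces, and covered by the boundedness of the duality functions and the hypotheses of the cited theorem in the countable-state setting used for the dual processes here, whose state spaces such as $\Omega_N^{\rm dual}$ are countable rather than finite).
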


\begin{proof}[Proof of Theorem \ref{Dual}]
Taking into account Proposition \ref{Generatorapproach}, 
and by equation \eqref{Duality}, we are left to show that 
\begin{equation*}
    \big(\LL_{-}+ \LL_{+}\big)D(.,\xi)(\e) 
    = \big(\LL_{-}^{\rm Dual} +\LL_{+}^{\rm Dual} \big)D(\e,.)(\xi),
\end{equation*}
where we recall that 
$\LL_{-}^{\rm Dual}$ is defined in \eqref{Ldualab},
$\LL_{+}^{\rm Dual}$ in \eqref{Ldual+}, $\LL_{-}$ in \eqref{leftres} 
and $\LL_{+}$ in \eqref{rightres}. Without loss of generality, we prove 
the result for the left reservoir, i.e. 
\begin{equation*}
    \LL_{-} D(.,\xi)(\e) = \LL_{-}^{\rm Dual} D(\e,.)(\xi),
\end{equation*}
and the proof that $\LL_{+} D(.,\xi)(\e) = \LL_{+}^{\rm Dual} D(\e,.)(\xi)$ 
follows the same steps. We have
\begin{equation*}
     \LL_{-}D(.,\xi)(\e)
     = \alpha\big(1-\e_1\big)\big[D(\e^1,\xi) - D(\e,\xi) \big] 
     + \gamma \e_1 \big[D(\e^1,\xi)- D(\e,\xi) \big].
\end{equation*}
Now, 
\begin{equation*}
\begin{split}
     D(\e^1,\xi) &= c_-^{\xi_0} \ c_+^{\xi_{N+1}}
     \prod_{x\in A(\xi)}(1-\e^1_{x})  \\
     &= c_-^{\xi_0} \ c_+^{\xi_{N+1}} \prod_{x\in A(\xi)\setminus\{1\}}(1-\e_{x}) \e_1\mathds{1}_{1\in A(\xi)} + D(\e,\xi)\mathds{1}_{1\notin A(\xi)}.
\end{split}
   \end{equation*}
Therefore,
\begin{equation*}
    \begin{split} 
        &D(\e^1,\xi)-D(\e,\xi)=D(\e^1,\xi) 
        - D(\e,\xi)\mathds{1}_{1\in A(\xi)}-D(\e,\xi)\mathds{1}_{1\notin A(\xi)}\\
        &=c_-^{\xi_0} \ c_+^{\xi_{N+1}} \prod_{x\in A(\xi)\setminus\{1\}}(1-\e_{x}) 
    \e_1\mathds{1}_{1\in A(\xi)} 
        - D(\e,\xi)\mathds{1}_{1\in A(\xi)}\\
        &= c_-^{\xi_0} \ c_+^{\xi_{N+1}}(2\e_1-1)\prod_{x\in A(\xi)\setminus\{1\}}
        (1-\e_{x}) \mathds{1}_{1\in A(\xi)}.
    \end{split}
\end{equation*}
Finally, we are left with
\begin{equation*}
    \begin{split}
        & \LL_{-} D(.,\xi)(\e)
        = \alpha\big(1-\e_1\big)(2\e_1-1) \ c_-^{\xi_0} \ c_+^{\xi_{N+1}} \
        prod_{x\in A(\xi)\setminus\{1\}}(1-\e_{x}) \mathds{1}_{1\in A(\xi)}\\
        & + \gamma \e_1 (2\e_1-1) \ c_-^{\xi_0} \ c_+^{\xi_{N+1}}
        \prod_{x\in A(\xi)\setminus\{1\}}(1-\e_{x}) \mathds{1}_{1\in A(\xi)}\\
        &= - \alpha D(\e,\xi)\mathds{1}_{1\in A(\xi)}
         + \gamma \ c_-^{\xi_0} \ c_+^{\xi_{N+1}} 
         \prod_{x\in A(\xi)\setminus\{1\}}(1-\e_{x})
         \e_1\mathds{1}_{1\in A(\xi)}\\
        &= - (\alpha + \gamma) D(\e,\xi)\mathds{1}_{1\in A(\xi)} + (\alpha +\gamma) \
        c_-^{\xi_0 +1} \ c_+^{\xi_{N+1}} 
        \prod_{x\in A(\xi)\setminus\{1\}}(1-\e_{x}) \mathds{1}_{1\in A(\xi)}\\
        &= (\alpha + \gamma)\xi_1\Big[D(\e,\xi^{1,0}) - D(\e,\xi) \Big],
        \end{split}
\end{equation*}
which is exactly the dual absorbing generator of the left boundary.
\end{proof}

\subsubsection{Proofs for Section \ref{subsec:corfu}}
\label{subsubsec:proofs-corfu}

\begin{proof}[Proof of Proposition \ref{Proposition_one_point_DCP}]
    By definition \eqref{globaldualityfct} of the duality function $D$
    given in Theorem \ref{Dual}, 
    if we choose as dual process $\xi(t) = \delta_y (t)$, we have 
\begin{equation*}
    D(\e,\delta_y) = 1- \eta_y \;.
\end{equation*}
The duality relation \eqref{gendualfirstdef}
 then yields that for any $t\geq 0$,

\begin{equation}\label{relation}
\int_{}\mathbb{E}_{\e}\big[D(\e(t),\delta_y) \big]\nu^{\rm DCP}(d \e) 
= \int_{}\mathbb{E}_{\delta_y}\big[D(\e,\xi(t)) \big]\nu^{\rm DCP}(d \e).
\end{equation}
By invariance of $\nu^{\rm DCP}$, 
the left hand side in \eqref{relation} does not depend on $t$ and equals
\begin{equation}\label{one_pointR1}
\int_{}\mathbb{E}_{\e}\big[D(\e,\delta_y) \big]\nu^{\rm DCP}(d\e)
 = 1- \mathbb{E}_{\nu^{\rm DCP}}[\e_y] \;,
\end{equation}
while the right hand equals 
\begin{equation}\label{one_pointR2}
\int_{}\mathbb{E}_{\delta_y}\big[D(\e,\xi(t)) \big]\nu^{\rm DCP}(d \e) 
\underset{t\rightarrow \infty}{\longrightarrow}
\mathbb{E}_{\delta_y}\big[c_{-}^{\xi_0(\infty)}c_{+}^{\xi_{N+1}(\infty)} \big],
\end{equation}
since for $t\rightarrow \infty$ all dual particles will eventually be absorbed 
at the boundary. Collecting \eqref{one_pointR1} and \eqref{one_pointR2} 
yields the result.
\end{proof}

\begin{proof}[Proof of Lemma \ref{lemme}]
    We have 
    \begin{align*}
    \mathbb{E}_{\delta_y}\big[c_{-}^{\xi_0(\infty)}c_{+}^{\xi_{N+1}(\infty)} \big] 
    &= \mathbb{P}_{\delta_y}\big[A_y(\infty)^{c} \big] 
    + \mathbb{E}_{\delta_y}\big[c_{-}^{\xi_0(\infty)}
    c_{+}^{\xi_{N+1}(\infty)} \mathds{1}_{A_y(\infty)} \big]\\
    &= 1- \mathbb{P}_{\delta_y}\big[A_y(\infty) \big] 
    + \mathbb{E}_{\delta_y}\big[c_{-}^{\xi_0(\infty)}
    c_{+}^{\xi_{N+1}(\infty)} \mathds{1}_{A_y(\infty)} \big],
\end{align*}
therefore,
\begin{equation*}
   \rho_1^{\rm DCP}(y) = 1- \mathbb{E}_{\delta_y}\big[c_{-}^{\xi_0(\infty)}
   c_{+}^{\xi_{N+1}(\infty)} \big]\leq \mathbb{P}_{\delta_y}\big[A_y(\infty) \big].  
\end{equation*}
\end{proof}

\begin{proof}[Proof of Proposition \ref{any_point_label}]
    For simplicity, we consider that only the left reservoir interacts with the bulk, 
    that is, $\delta=\beta=0$, but the reasoning remains the same if that is not 
    the case. Our goal is therefore to prove that for any 
    $1\leq x_1<...<x_{\ell}\leq N$,  
\begin{equation} \label{lcorrelations2}
    \mathbb{E}_{\nu^{\rm DCP}}
    \Big[ \prod_{j=1}^{\ell} \left( 1 -\eta_{x_j} \right) \Big] 
    = \sum_{k\geq 0} 
    \mathbb{P}_{\delta_{x_1,x_2,...,x_{\ell}}}\Big[\xi_0(\infty)=k \Big] c_{-}^k.
\end{equation}
In this case, it follows that the duality relation \eqref{processduality}
 holds with duality function
\begin{equation}\label{def:duual-fct}  
D(\e,\xi) = c_{-}^{\xi_0}\prod_{x\in \Lambda_N}(1-\e_x)^{\xi_x} .
\end{equation}
In other words, the initial process is dual to a DCP with an absorbing 
sink with rate $\alpha + \gamma$. Then, proceeding as in the proof of 
Proposition \ref{Proposition_one_point_DCP}, we consider the duality 
relation \eqref{processduality} 
under the invariant measure for the original process and, taking 
$\xi = \delta_{x_1,x_2,...,x_{\ell}}$, we have:
\begin{equation} \label{lcorrelations}
    \mathbb{E}_{\nu^{\rm DCP}}\Big[\left(1-\e_{x_1}\right)\left(1-\e_{x_2}\right)
    \ldots \left(1-\e_{x_{\ell}}\right) \Big] 
    = \mathbb{E}_{\delta_{x_1,x_2,...,x_{\ell}}}\Big[c_{-}^{\xi_0(\infty)} \Big].
\end{equation}
Expanding both sides of \eqref{lcorrelations}, we are left with 
\begin{equation*}
    \sum_{k=0}^{\ell}(-1)^k \sum_{1\leq i_1<...<i_k\leq \ell}
    \rho_k^{\rm DCP}(x_{i_1},...,x_{i_k}) 
    =\sum_{k\geq 0} 
    \mathbb{P}_{\delta_{x_1,x_2,...,x_{\ell}}}\Big[\xi_0(\infty)=k \Big] c_{-}^k \;,
\end{equation*}
where 
$\mathbb{P}_{\delta_{x_1,x_2,...,x_{\ell}}}\Big[\xi_0(\infty)=k \Big]$ 
is the probability that $k$ dual particles are absorbed 
in the sink when the dual process 
is initialized with $\ell$ particles in sites $0 < x_1 < \ldots x_\ell $.
\end{proof}

\section{Generalized diffusive contact process (GDCP)}\label{sec:GDCP}

In this section, we introduce a generalized diffusive contact (GDCP) 
on $\Lambda_N$ which also 
exhibits a factorized duality property, similar to the one for the DCP. For 
a specific choice of the parameters of the GDCP on a finite one-dimensional 
lattice with reservoirs, 
one can extract an explicit expression of the one-point function for 
its invariant measure. This is in contrast with the DCP, where we were not able 
to extract explicit expressions of the correlation functions from the duality 
result provided by Theorem \ref{Dual}. For time-dependent correlations, 
duality leads to a closed system of first-order ordinary differential equations 
with constant coefficients. This is demonstrated in explicit form for 
the one-point function.

\subsection{The model}\label{subsec:modelGDCP}
Here, as for the DCP, particles evolve on the open one-dimensional 
finite lattice $\Lambda_N$. 
The reservoir dynamics is the same as in Section \ref{dcp section}, given by 
the generators $\mathcal{L}_{-}$ and $\mathcal{L}_{+}$ defined in \eqref{leftres} 
and $\eqref{rightres}$. The exclusion dynamics is the same as for the DCP, 
given by 
the generator $\mathcal{L}^{\rm SSEP}$ defined in \eqref{exclusion generator}.

What differs is the contact dynamics which is here more general.
The birth rate is $\lambda > 0$, while the death rates, instead of being $1$,
depend on the occupation variables in the nearest neighbor sites: 
a particle dies with rate $2\mu_1$ when both neighbors are empty,
with rate $\mu_1+\mu_2$ when one of the two neighbors is empty and the other
is occupied, and with rate $2\mu_2$ when both neighbors are occupied. We refer \eqref{rates} for a description of the rates in a tabular form. The generator of the generalized diffusive contact processes is given by 
\begin{equation}\label{generator_opengeneralizeddiffusivecontact}
\mathcal{L}^{\rm GDCP}= \LL_{-} + \LL^{\rm GCP}+ \mathcal{D}\LL^{\rm SSEP} + \LL_{+}.
\end{equation} 
Above, $\LL_{-}$, resp. $\LL_{+}$, is the left, resp. right hand side 
reservoir generator, 
and is given by \eqref{leftres}, resp. \eqref{rightres}, with the boundary rates 
$\widetilde{\alpha}, \widetilde{\beta}, \widetilde{\gamma}, \widetilde{\delta}$ 
for the GDCP, 
instead of $\alpha, \beta, \gamma, \delta$ for the DCP. As before,  
$\mathcal{D}\geq 0$ is the 
diffusion parameter which tunes the exclusion dynamics described by 
$\LL^{\rm SSEP}$ in 
equation \eqref{exclusion generator}. While $\LL^{\rm GCP}$ is the generator of the 
generalized contact process, whose action on a given function
$f:\Omega_N \rightarrow \R$, is, for $\eta\in\Omega_N$
\begin{equation}\label{gen:GCP}
    \LL^{\rm GCP} f(\e) = \sum_{x=1}^{N-1}\LL_{x,x+1}^{\rm GCP}f(\e) ,
\end{equation}
with the bond generator 
\begin{equation}\label{genbond:GCP}
\begin{split}
        \LL_{x,x+1}^{\rm GCP}f(\e) &= \e_x\big[
        \mu_2\eta_{x+1}+\mu_1(1-\eta_{x+1})\big]\big[f(\e^x)-f(\e) \big]
        + \lambda (1-\e_x)\e_{x+1}\big[f(\e^x)-f(\e) \big]\\
        & + \e_{x+1}\big[
        \mu_2\eta_{x}+\mu_1(1-\eta_{x})\big]\big[f(\e^{x+1})-f(\e) \big]
        + \lambda (1-\e_{x+1})\e_{x}\big[f(\e^{x+1})-f(\e) \big].
\end{split}
\end{equation}
That is,
\begin{equation}\label{generalizedcontact generator}
\begin{split}
        &\LL^{\rm GCP} f(\e)  
        =\Big[ \eta_1 \big( \mu_1 + \left( \mu_2 - \mu_1 \right) \eta_2   \big) 
         + \lambda\big(1-\eta_1 \big)\eta_2 \big]\big[f(\e^1)-f(\e) \Big]\\
        &+\sum_{x=2}^{N-1} \Big [\eta_x \big( 2 \mu_1 
        + \left( \mu_2 - \mu_1 \right) \left( \eta_{x-1} + \eta_{x+1}  \right) \big) 
        + \lambda\big(1-\eta_x \big)\big(\eta_{x-1} 
        + \eta_{x+1} \big) \Big] \big[f(\e^x)-f(\e) \big]\\ 
        &+ \Big[ \eta_N \big( \mu_1 + \left( \mu_2 - \mu_1 \right) \eta_{N-1}   \big) 
         + \lambda\big(1-\eta_N \big)\eta_{N-1} \big]\big[f(\e^N)-f(\e) \Big] \;.
\end{split}
\end{equation}

\begin{rem}\label{RecoverDCP_GDCP}
\begin{itemize}
    \item[(1)] One can recover an open DCP with boundary parameters 
    $\alpha, \beta, \gamma, \delta$, 
    from an open GDCP with boundary parameters 
    $\widetilde{\alpha}, \widetilde{\beta}, \widetilde{\gamma}, \widetilde{\delta}$,
     and death parameters $\mu_1,\mu_2$, by taking
\begin{equation}\label{eq:RecoverDCP_GDCP}
    \mu_1=\mu_2=\frac{1}{2},~\widetilde{\alpha}=\alpha,~\widetilde{\gamma}
    =\gamma + \frac{1}{2},~ \widetilde{\delta}=\delta,~\text{and}~\widetilde{\beta}
    = \beta +\frac{1}{2} .
\end{equation}
\item[(2)] 
For $\mu_2=0$, the GDCP reduces to a biased voter model with stirring
by symmetric simple exclusion,
and when $\mu_2=0$ and $\mu_1 = \lambda$,
this is the usual voter model with stirring, studied in \cite{Belitsky2001}
for asymmetric simple exclusion with step initial condition on $\Z$, 
and \cite{Krebs2003} for totally asymmetric simple exclusion on $\Lambda_N$
and open boundaries with $\widetilde{\gamma}=\widetilde{\delta}=0$. 
\end{itemize}
\end{rem}

As for the DCP, since the state space is finite, one can encode 
the dynamics in a finite sized matrix. In this setting, the intensity matrix 
of the GDCP in the bulk writes as the following $2^N$-sized  matrix

\begin{equation}\label{L-GDCP}
L^{\rm GCP} +  \mathcal{D}L^{\rm SSEP}= \sum_{x=1}^{N-1} \ell_x,
\end{equation}
where for $x=1, \ldots N-1$
\begin{equation}\label{ell-x}
\ell_x := \mathds{1}^{\otimes (x-1)}\otimes \ell \otimes \mathds{1}^{\otimes (N-1-x)},
\end{equation}
with the two-dimensional unit matrix $\mathds{1}$, and the local intensity matrix
\begin{equation}\label{ell}
\ell = \begin{pmatrix}
0 &  0 & 0 & 0  \\
\mu_1  & -( \mu_1 + \mathcal{D} + \lambda) & \mathcal{D} & \lambda \\
\mu_1  & \mathcal{D} & -( \mu_1 + \mathcal{D} + \lambda) & \lambda  \\
0 & \mu_2 &\mu_2 & -2\mu_2 
\end{pmatrix} .
\end{equation}
The intensity matrices of the boundary dynamics are the same as 
for the DCP dynamics, see (1) in Remark \ref{RecoverDCP_GDCP}.

The bond transition rates for neighboring sites $(x,x+1)$ 
can be schematically represented in tabular form as
\begin{equation}\label{rates}
\begin{array}{llll}
Initial & & Final & Rate \\
\hline\hline \\[-2mm]
0 1 & \rightarrow & 0 0 & \mu_1 \\[2mm]
0 1 & \rightarrow & 1 0 & \mathcal{D} \\[2mm]
0 1 & \rightarrow & 1 1 & \lambda \\[2mm]
1 0 & \rightarrow & 0 0 & \mu_1 \\[2mm]
1 0 & \rightarrow & 0 1 & \mathcal{D} \\[2mm]
1 0 & \rightarrow & 1 1 & \lambda \\[2mm]
1 1 & \rightarrow & 0 1 & \mu_2 \\[2mm]
1 1 & \rightarrow & 1 0 & \mu_2 
\end{array}
\end{equation}

\subsection{Duality results for the GDCP}\label{subsec:dualGDCP}
In this section we establish in Theorem \ref{dualityGCDmodel}
a duality relation for the GDCP. 
A special case, derived in Corollary \ref{Annihilating} 
will be of interest for applications, as
it allows to find a dual process with no birth rate. This means 
that the sum in the equation \eqref{lcorrelations2} for correlations  
is finite, and can be explicitly computed for a general bulk of $N$ sites. 

\begin{thm} \label{dualityGCDmodel}
Assume $\mathcal{D} + \mu_1 - \mu_2 \geq 0$ and 
$\lambda + \mu_2 - \mu_1\geq 0$. 
The open GDCP with generator 
 \eqref{generator_opengeneralizeddiffusivecontact} 
 is dual with a purely absorbing GDCP with generator
\begin{equation}
\label{dualgenerator_opengeneralizeddiffusivecontact}
   \mathcal{L}^{\rm GDCP,Dual}= \LL_-^{\rm Dual}
    + \LL^{\rm GCP,Dual}+ \mathcal{\widehat D}\LL^{\rm SSEP} 
    + \LL_+^{\rm Dual},
\end{equation} 
w.r.t. the duality function \eqref{globaldualityfct} obtained for the open 
diffusive contact process. We have that
$\LL^{\rm GCP,Dual}$ is the generator 
\eqref{generalizedcontact generator} of the generalized contact process 
with birth rate $\widehat \lambda = \lambda + \mu_2 - \mu_1$ 
and death rates exchanged, that is, 
\begin{equation}\label{dual_generalizedcontact generator}
\begin{split}
   &  \LL^{\rm GCP, Dual} f(\xi)  
   = \\ & \sum_{x=2}^{N-1} \Big [\xi_x \big( 2 \mu_2 
   + \left( \mu_1 - \mu_2 \right) \left( \xi_{x-1}
    + \xi_{x+1}  \right) \big) + \left( \lambda + \mu_2 
    - \mu_1 \right)\big(1-\xi_x \big)\big(\xi_{x-1}
     + \xi_{x+1} \big) \Big] \Big[f(\xi^x)-f(\xi) \Big]\\ 
        &+ \Big[ \xi_1 \big(  \mu_2 + \left( \mu_1 - \mu_2 \right) 
        \xi_2   \big) 
         +  \left( \lambda + \mu_2 - \mu_1 \right) \big(1-\xi_1 \big)
         \xi_2 \big]\Big[f(\xi^1)-f(\xi) \Big]\\
        &+ \Big[ \xi_N \big( \mu_2 + \left( \mu_1 - \mu_2 \right) \xi_{N-1}   \big) 
         +  \left( \lambda + \mu_2 - \mu_1 \right) \big(1-\xi_N \big)\xi_{N-1} 
         \big]\Big[f(\xi^N)-f(\xi) \Big] \; ,
\end{split}
\end{equation}
$ \widehat{\mathcal{D}}= \mathcal{D} + \mu_1 - \mu_2 $ is 
the diffusion parameter of the exclusion dynamics, and $\LL_-^{\rm Dual}$, 
$\LL_+^{\rm Dual}$ are defined in \eqref{Ldualab}, \eqref{Ldual+}
with rates $\widetilde \alpha, \widetilde \beta, \widetilde \gamma, 
\widetilde \delta$, 
in place of $\alpha, \beta, \gamma, \delta$.
\end{thm}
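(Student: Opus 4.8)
The plan is to mirror the proof of Theorem~\ref{Dual}: by Proposition~\ref{Generatorapproach} it suffices to verify the generator identity
\begin{equation*}
\big(\mathcal{L}^{\rm GDCP}D(\cdot,\xi)\big)(\eta)=\big(\mathcal{L}^{\rm GDCP,Dual}D(\eta,\cdot)\big)(\xi),\qquad \eta\in\Omega_N,\ \xi\in\Omega_N^{\rm dual},
\end{equation*}
with $D$ as in \eqref{globaldualityfct} but now with $c_-=\widetilde\gamma/(\widetilde\alpha+\widetilde\gamma)$ and $c_+=\widetilde\beta/(\widetilde\beta+\widetilde\delta)$. Writing the two generators \eqref{generator_opengeneralizeddiffusivecontact} and \eqref{dualgenerator_opengeneralizeddiffusivecontact} as the sum of the two reservoir terms plus a bulk term, the reservoir identities $\LL_-D(\cdot,\xi)(\eta)=\LL_-^{\rm Dual}D(\eta,\cdot)(\xi)$ and its right-boundary analogue are exactly the ones established inside the proof of Theorem~\ref{Dual}: that computation uses only the reservoir rates, the prefactors $c_-^{\xi_0}$, $c_+^{\xi_{N+1}}$ and the single local factor $(1-\eta_1)^{\xi_1}$ (resp. $(1-\eta_N)^{\xi_N}$) of $D$, and no bulk dynamics, so it transfers verbatim with $\widetilde\alpha,\widetilde\beta,\widetilde\gamma,\widetilde\delta$ in place of $\alpha,\beta,\gamma,\delta$. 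Hence it remains to prove the bulk duality
\begin{equation*}
\big[(\LL^{\rm GCP}+\mathcal{D}\LL^{\rm SSEP})H(\cdot,\xi)\big](\eta)=\big[(\LL^{\rm GCP,Dual}+\widehat{\mathcal{D}}\LL^{\rm SSEP})H(\eta,\cdot)\big](\xi),\qquad \eta,\xi\in\{0,1\}^{\Lambda_N},
\end{equation*}
with $H(\eta,\xi)=\prod_{x\in\Lambda_N}(1-\eta_x)^{\xi_x}$.

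I would establish this in the matrix formulation of Remark~\ref{Matrixapproach}. As in \eqref{L-GDCP}, both bulk generators are sums of bond generators over the bonds $(x,x+1)$, $1\le x\le N-1$; recall from \eqref{gen:GCP} that the special site-$1$ and site-$N$ terms appearing in \eqref{generalizedcontact generator} are precisely the contributions of the bonds $(1,2)$ and $(N-1,N)$, so they require no separate treatment. Since $H$ is the factorized duality matrix $G^{\otimes N}$ with $G=\bigl(\begin{smallmatrix}1&1\\1&0\end{smallmatrix}\bigr)$ and each bond generator acts nontrivially only on two consecutive coordinates, the mixed-product property of Kronecker products reduces the bulk duality to the single $4\times4$ identity
\begin{equation*}
\ell\,G^{\otimes2}=G^{\otimes2}\,(\ell^{\rm dual})^{T},
\end{equation*}
where $\ell$ is the local intensity matrix \eqref{ell} and $\ell^{\rm dual}$ is the bond matrix read off from \eqref{dual_generalizedcontact generator}, i.e.\ \eqref{ell} with $\mu_1\leftrightarrow\mu_2$, $\mathcal{D}\to\widehat{\mathcal{D}}=\mathcal{D}+\mu_1-\mu_2$ and $\lambda\to\widehat\lambda=\lambda+\mu_2-\mu_1$ (so $\widehat{\mathcal{D}}+\widehat\lambda=\mathcal{D}+\lambda$). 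Introducing the linear map $\Phi(X):=G^{\otimes2}X^{T}(G^{\otimes2})^{-1}$, well defined since $\det G=-1$ and $G$ is symmetric, the claim is $\Phi(\ell)=\ell^{\rm dual}$.

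To check $\Phi(\ell)=\ell^{\rm dual}$ I would split $\ell=\ell_0+(\mu_2-\mu_1)E$, where $\ell_0$ is \eqref{ell} with $\mu_2$ replaced by $\mu_1$ --- a contact-plus-exclusion bond generator with death rate $\mu_1$ per bond, infection rate $\lambda$ and diffusion rate $\mathcal{D}$ --- and $E$ is the matrix supported on the row indexed by the doubly occupied bond state $11$, encoding the two extra death channels $11\to01$ and $11\to10$. The contact-process bond generator and the exclusion bond generator are each self-dual with respect to $G^{\otimes2}$ (the bond-wise version of \eqref{Duality}, checked by the same short computation), hence so is their sum, giving $\Phi(\ell_0)=\ell_0$. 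A direct computation then gives $\Phi(E)$ equal to the matrix supported on the rows $01$ and $10$ with entries $(1,-1,-1,1)$ in the ordering $00,01,10,11$, and one verifies that this is exactly $(\mu_2-\mu_1)^{-1}(\ell^{\rm dual}-\ell_0)$; by linearity $\Phi(\ell)=\Phi(\ell_0)+(\mu_2-\mu_1)\Phi(E)=\ell^{\rm dual}$, which closes the bulk identity. (Alternatively one simply multiplies out the two $4\times4$ matrices $\ell\,G^{\otimes2}$ and $G^{\otimes2}(\ell^{\rm dual})^{T}$.) The hypotheses $\mathcal{D}+\mu_1-\mu_2\ge0$ and $\lambda+\mu_2-\mu_1\ge0$ enter only here: the algebraic identity $\Phi(\ell)=\ell^{\rm dual}$ holds for all real parameter values, but $\ell^{\rm dual}$ is the intensity matrix of a genuine Markov process --- so that \eqref{dualgenerator_opengeneralizeddiffusivecontact} defines a bona fide dual \emph{process} --- precisely when $\widehat{\mathcal{D}}\ge0$ and $\widehat\lambda\ge0$.

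I expect the only real work to be the bookkeeping in this last bond identity rather than any conceptual difficulty: one must split the neighbour-dependent death rate of a site between its two incident bonds (which is why the bond death rates are $\mu_1$ or $\mu_2$, not $2\mu_1$ or $2\mu_2$), and then track how, under $\Phi$, the extra death channel of a doubly occupied bond is redistributed simultaneously into a shift of the diffusion rate, $\mathcal{D}\mapsto\mathcal{D}+\mu_1-\mu_2$, and of the infection rate, $\lambda\mapsto\lambda+\mu_2-\mu_1$, with $\mathcal{D}+\lambda$ preserved, together with the interchange of the two death rates --- the somewhat unexpected feature highlighted in the statement of Theorem~\ref{dualityGCDmodel}.
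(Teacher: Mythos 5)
Your proposal is correct and follows essentially the paper's own route: reduce the statement, via Proposition \ref{Generatorapproach} and the factorized duality matrix, to the single bond identity $\ell\, G^{\otimes 2} = G^{\otimes 2}\left(\ell^{\rm Dual}\right)^{T}$, and carry over the reservoir computation from the proof of Theorem \ref{Dual} verbatim with $\widetilde\alpha,\widetilde\beta,\widetilde\gamma,\widetilde\delta$. The only (harmless) difference is in how the $4\times 4$ identity is checked: the paper computes both sides for a generic duality matrix and generic dual rates and thereby \emph{derives} $a=b=c$, $d=0$ together with the exchange $\mu_1\leftrightarrow\mu_2$, $\widehat{\mathcal{D}}=\mathcal{D}+\mu_1-\mu_2$, $\widehat\lambda=\lambda+\mu_2-\mu_1$, whereas you \emph{verify} the specific identity by splitting $\ell=\ell_0+(\mu_2-\mu_1)E$ and computing $\Phi(E)$ — a check that is indeed correct (one confirms $\Phi(E)$ has rows $(1,-1,-1,1)$ at the singly occupied bond states, matching $(\mu_2-\mu_1)^{-1}(\ell^{\rm Dual}-\ell_0)$), as is your observation that the positivity hypotheses are only needed for $\ell^{\rm Dual}$ to be a genuine intensity matrix.
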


The proof of Theorem \ref{dualityGCDmodel} is given in subsection
\ref{subsec:proofs-gdcp}.
A special case of the above setting is elucidated in the Corollary below.

\begin{cor}[Annihilating dual process]\label{Annihilating}
Under the same hypothesis as before, setting $\mu_1 = \lambda + \mu_2$, 
the dual GDCP has birth rate $\widehat \lambda = 0$, i.e. no particle is ever created. 
The intensity matrix associated to the local dual generator is given by
\begin{equation} \label{anndual}
\ell^{\rm Dual}
=  \left(\ba{cccc}
0 & 0 & 0 & 0 \\
{\mu}_2 & -({\mathcal{D}}+{\lambda}+
{\mu}_2) &\mathcal{D} + \lambda & 0 \\
\mu_2 & \mathcal{D} + \lambda & -({\mathcal{D}}+{\lambda}+{\mu}_2) & 0 \\
0 & \lambda + {\mu}_2 &  \lambda + {\mu}_2 & - 2 ( \lambda + {\mu}_2) 
\ea\right) \;.
\end{equation} 
\end{cor}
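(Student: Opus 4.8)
The plan is to derive Corollary \ref{Annihilating} as an immediate specialization of Theorem \ref{dualityGCDmodel}, so there is essentially no new work: the corollary is a substitution exercise. First I would check that the hypothesis $\mu_1 = \lambda + \mu_2$ is compatible with the standing assumptions of Theorem \ref{dualityGCDmodel}, namely $\mathcal{D} + \mu_1 - \mu_2 \geq 0$ and $\lambda + \mu_2 - \mu_1 \geq 0$. The first becomes $\mathcal{D} + \lambda \geq 0$, which holds since $\mathcal{D} \geq 0$ and $\lambda > 0$; the second becomes $\lambda + \mu_2 - \mu_1 \geq 0$, i.e. exactly $\widehat{\lambda} \geq 0$ with equality, since by hypothesis $\mu_1 = \lambda + \mu_2$ gives $\widehat{\lambda} = \lambda + \mu_2 - \mu_1 = 0$. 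So the dual birth rate vanishes, which is the first assertion.

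Next I would read off the dual local intensity matrix. By Theorem \ref{dualityGCDmodel} the dual bulk dynamics is the GCP with birth rate $\widehat{\lambda} = \lambda + \mu_2 - \mu_1 = 0$ and with death rates $\mu_1, \mu_2$ exchanged, superposed with SSEP at diffusion parameter $\widehat{\mathcal{D}} = \mathcal{D} + \mu_1 - \mu_2 = \mathcal{D} + \lambda$ (using $\mu_1 - \mu_2 = \lambda$). The local intensity matrix of the dual therefore has the same shape as \eqref{ell} but with the replacements $\lambda \to \widehat{\lambda} = 0$, $\mathcal{D} \to \widehat{\mathcal{D}} = \mathcal{D} + \lambda$, $\mu_1 \to \mu_2$ and $\mu_2 \to \mu_1 = \lambda + \mu_2$. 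Substituting into \eqref{ell}: the $(2,2)$ and $(3,3)$ diagonal entries become $-(\mu_2 + (\mathcal{D}+\lambda) + 0) = -(\mathcal{D} + \lambda + \mu_2)$; the $(2,3)$ and $(3,2)$ entries become $\widehat{\mathcal{D}} = \mathcal{D} + \lambda$; the $(2,4)$ and $(3,4)$ entries (old $\lambda$) become $\widehat{\lambda} = 0$; the $(2,1)$ and $(3,1)$ entries (old $\mu_1$) become $\mu_2$; the $(4,2)$ and $(4,3)$ entries (old $\mu_2$) become $\mu_1 = \lambda + \mu_2$; and the $(4,4)$ entry becomes $-2(\lambda + \mu_2)$. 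This is exactly the matrix \eqref{anndual}, and one checks each row sums to zero, confirming it is a valid intensity matrix.

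The only thing worth spelling out is the interpretation: with $\widehat{\lambda} = 0$ the dual process performs no creations, so the dual GCP reduces to pure deaths (at the exchanged rates) plus stirring, i.e. the number of dual particles is non-increasing in the bulk; combined with the absorbing boundary moves $\LL_-^{\rm Dual}$, $\LL_+^{\rm Dual}$ from \eqref{Ldualab}--\eqref{Ldual+}, all dual particles are removed or absorbed in finite time. Since no step here goes beyond substitution into formulas already proved, there is no genuine obstacle; the only mild care needed is bookkeeping — making sure the ``death rates exchanged'' prescription of Theorem \ref{dualityGCDmodel} is applied consistently (so that the role of $\mu_1$ in \eqref{ell} is played by $\mu_2$ in the dual and vice versa) and that $\mu_1 - \mu_2 = \lambda$ is used wherever $\widehat{\mathcal{D}}$ appears.
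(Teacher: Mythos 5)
Your proposal is correct and follows exactly the paper's route: the paper simply states that Corollary \ref{Annihilating} follows from Theorem \ref{dualityGCDmodel}, i.e. by substituting $\mu_1=\lambda+\mu_2$ into the dual parameters $\widehat\lambda=\lambda+\mu_2-\mu_1$, $\widehat{\mathcal{D}}=\mathcal{D}+\mu_1-\mu_2$ and the exchanged death rates in \eqref{duallocal}, which is what you do. Your extra checks (compatibility with the hypotheses of the theorem and zero row sums) are harmless bookkeeping beyond what the paper records.
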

\noindent
We remark that no such dual process can be found for the DCP since the condition
\begin{equation*}
    \mu_1 = \lambda + \mu_2
\end{equation*}
is not satisfied.

\subsection{Application of duality: computing correlations}\label{subsec:compcor}
As for the DCP, the dynamics of the GDCP is irreducible and the state space is finite 
so there is a unique invariant measure $\nu^{\rm GDCP}$ for the GDCP, which depends 
on the birth, death, exchange and boundary rates. For a particular choice 
of these rates, 
the one-point function of $\nu^{\rm GDCP}$ can be explicitly computed, 
see Proposition \ref{explicit_one_point_GDCP}. Furthermore, some information 
on the time-dependent one-point function for arbitrary initial 
distributions can be obtained using duality. 
To this end,
consider $\xi = \delta_{x_1, \ldots, x_K }$ namely the dual 
configuration with $K$ particles respectively 
on sites $0 < x_1 < \ldots < x_K < N+1 $ and none elsewhere,
 then the duality function reads
\begin{eqnarray}\label{thisdualfct}
    D(\eta, \delta_{x_1, \ldots, x_K } ) 
    = \prod_{x \in A(\xi) } \left( 1- \eta_x \right) 
    =  \prod_{i=1  }^{K} \left( 1- \eta_{x_{i}}\right),
\end{eqnarray}
where $A(\xi)=\lbrace x_1, \ldots, x_K \rbrace $. 
Then one can write the time-dependent 
$K$-points correlations via duality as
\begin{align*}
   & \mathbb{E}_{\eta} 
   \left[ D \left( \eta(t), \delta_{x_1, \ldots, x_K }  \right) \right] 
   =     \mathbb{E}_{\eta} \left[ \prod_{x \in A(\xi) } 
   \left( 1- \eta_x(t) \right) \right]
   =
     \mathbb{E}_{\delta_{x_1, \ldots, x_K}} \left[ c_-^{\xi_0(t)} c_+^{\xi_{N+1}(t)}\prod_{x \in A(\xi (t)) } 
     \left( 1- \eta_x \right)  \right]  \\ 
     &  = \sum_{n_{-}=0}^\infty \sum_{n_{+}=0}^\infty \sum_{\zeta\in\Omega_N} 
     c_{-}^{n_{-}} c_{+}^{n_{+}} \prod_{i=1}^{M} (1-\zeta_{y_i}) p_t\big((0;x_1,\dots,x_K;0), (n_-;y_1,\dots,y_M;n_+) \big)
\end{align*}
where $p_t\big((0;x_1,\dots,x_K;0), (n_-;y_1,\dots,y_M;n_+) \big)$ is
the probability that, starting with a dual configuration $\delta_{x_1, \ldots, x_K }$, 
at time $t$ there are $n_-$ dual particles absorbed 
in the left sink in site $0$, $n_+$ 
dual particles absorbed in the right sink in site $N+1$, 
and a dual particle in sites $y_1, \ldots, y_M$.

\subsubsection{Invariant measure}
For $1\leq \ell \leq N,$ the $\ell$-point correlation function
 $\rho^{\rm GDCP}$ of $\nu^{\rm GDCP}$ is the function 
 $\rho_{\ell}^{\rm GDCP}$ defined by
\begin{equation}\label{ell_point_corr_def_GDCP}
    \forall x_1 <\ldots < x_{\ell} \in \Lambda_N,~ \rho_{\ell}^{\rm GDCP}(x_1,\ldots,x_{\ell}) 
    = \mathbb{E}_{\nu^{\rm GDCP}}\big[\e_{x_1}\ldots \e_{x_{\ell}} \big].
\end{equation}
From now on, we denote by
\begin{equation}\label{def:c0-cN+1_GCDP}
\widetilde c_{-}:= \frac{\widetilde \gamma}{\widetilde \alpha + \widetilde \gamma},~ ~ 
\text{and}~ ~ \widetilde c_{+} :
= \frac{ \widetilde \beta}{\widetilde \beta + \widetilde \delta}~. 
\end{equation}
As for the DCP, the dual $(\xi(t))_{t\geq 0}$ 
of the GDCP (we keep the same notation $\xi$ 
to refer to the dual process) becomes extinct almost surely and, recall that  
the total number of particles absorbed on the left, resp. right hand side boundary is:
\begin{equation}\label{def:absorbedparticles_GDCP}\xi_0(\infty) 
:= \underset{t\rightarrow \infty}{\lim}\xi_{0}(t),~ ~ \text{resp.}~ ~ \xi_{N+1}(\infty) 
:=\underset{t\rightarrow \infty}{\lim}\xi_{N+1}(t) , 
\end{equation}
where the limits are almost sure.\medskip

Following the same lines as for the proof of Proposition \ref{any_point_label}, 
one can prove the following general formula for the correlation functions of the GDCP:
\begin{prop}\label{any_point_label_GDCP} For any $1\leq \ell \leq N$, and any 
$1\leq x_1<...<x_{\ell}\leq N$,
\begin{equation}\label{lcorrelations3_GDCP}
\begin{split}
        \mathbb{E}_{\nu^{\rm GDCP}}\Big[ \prod_{j=1}^{\ell} \left( 1 -\eta_{x_j} \right) \Big] 
        &= \sum_{k=0}^{\ell}(-1)^k \sum_{1\leq i_1<\ldots <i_k\leq \ell}\rho_k^{\rm GDCP}(x_{i_1},\ldots ,x_{i_k}) \\
        &=\sum_{m,n\geq 0} \mathbb{P}_{\delta_{x_1,x_2,\ldots,x_{\ell}}}\Big[\xi_0(\infty)=m, \xi_N(\infty)
        =n \Big] \widetilde{c}_{-}^{ \ m} \widetilde{c}_{+}^{ \ n},
\end{split}
\end{equation} 
\end{prop}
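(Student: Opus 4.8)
The plan is to mimic the proof of Proposition \ref{any_point_label}, which itself is built on the proof of Proposition \ref{Proposition_one_point_DCP}, now using the GDCP duality of Theorem \ref{dualityGCDmodel} in place of the DCP duality of Theorem \ref{Dual}. The key point is that both theorems deliver \emph{the same} factorized duality function \eqref{globaldualityfct}, with the same bulk function $H(\eta,\xi) = \prod_{x\in A(\xi)}(1-\eta_x)$ and the same boundary weights, except that the relevant reservoir rates are now $\widetilde\alpha,\widetilde\beta,\widetilde\gamma,\widetilde\delta$, so the constants appearing in the duality function are $\widetilde c_-$ and $\widetilde c_+$ of \eqref{def:c0-cN+1_GCDP} rather than $c_-, c_+$. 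Hence the entire probabilistic argument goes through verbatim.

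Concretely, first I would fix $1 \le \ell \le N$ and sites $1\le x_1 < \dots < x_\ell \le N$, and take as dual initial configuration $\xi = \delta_{x_1,\dots,x_\ell}$, for which the duality function reduces to $D(\eta,\delta_{x_1,\dots,x_\ell}) = \prod_{j=1}^\ell (1-\eta_{x_j})$ as in \eqref{thisdualfct}. Next I would write down the duality relation \eqref{processduality} — valid here by Theorem \ref{dualityGCDmodel} together with Proposition \ref{Generatorapproach} — integrated against the invariant measure $\nu^{\rm GDCP}$ of the original process. By invariance of $\nu^{\rm GDCP}$, the left-hand side $\int \mathbb{E}_\eta[D(\eta(t),\delta_{x_1,\dots,x_\ell})]\,\nu^{\rm GDCP}(d\eta)$ is constant in $t$ and equals $\mathbb{E}_{\nu^{\rm GDCP}}[\prod_{j=1}^\ell(1-\eta_{x_j})]$; expanding the product gives the first equality in \eqref{lcorrelations3_GDCP}, the alternating sum over the $\rho_k^{\rm GDCP}$. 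For the right-hand side I would let $t\to\infty$: since the dual GDCP (established in Theorem \ref{dualityGCDmodel}, exactly as for the DCP) becomes extinct almost surely, all bulk dual particles are eventually absorbed into the two sinks at $0$ and $N+1$, so $\prod_{x\in A(\xi(t))}(1-\eta_x) \to 1$ and $\xi_0(t) \to \xi_0(\infty)$, $\xi_{N+1}(t)\to \xi_{N+1}(\infty)$ almost surely, both limits being finite. Bounded convergence then yields $\mathbb{E}_{\delta_{x_1,\dots,x_\ell}}[\widetilde c_-^{\,\xi_0(\infty)}\widetilde c_+^{\,\xi_{N+1}(\infty)}]$, and decomposing over the possible values $\xi_0(\infty)=m$, $\xi_{N+1}(\infty)=n$ gives the second equality, the double sum over $m,n\ge 0$ of $\mathbb{P}_{\delta_{x_1,\dots,x_\ell}}[\xi_0(\infty)=m,\xi_{N+1}(\infty)=n]\,\widetilde c_-^{\,m}\widetilde c_+^{\,n}$.

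There is essentially no serious obstacle here; the only things to verify carefully are (i) that the dual GDCP indeed dies out almost surely — which follows because under the hypotheses $\widehat{\mathcal D}=\mathcal D+\mu_1-\mu_2 \ge 0$, $\widehat\lambda = \lambda+\mu_2-\mu_1 \ge 0$ the dual is a bona fide finite-rate particle system on $\Lambda_N$ whose bulk only loses particles (to the sinks) or diffuses and reacts, and the boundary absorption rates $\widetilde\alpha+\widetilde\gamma$, $\widetilde\delta+\widetilde\beta$ are strictly positive, so every configuration with a bulk particle reaches the all-empty-bulk state with positive rate in finitely many steps; and (ii) that $|\widetilde c_-|, |\widetilde c_+| \le 1$ so the bounded convergence theorem applies and the double series converges. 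I would remark that, as in Proposition \ref{any_point_label}, it suffices to treat the one-reservoir case $\widetilde\delta=\widetilde\beta=0$ and note the two-reservoir case is identical; the statement of Proposition \ref{any_point_label_GDCP} is then immediate. The proof can therefore be stated in a few lines as ``Following the lines of the proof of Proposition \ref{any_point_label}, replacing the DCP duality of Theorem \ref{Dual} by the GDCP duality of Theorem \ref{dualityGCDmodel} and the constants $c_\pm$ by $\widetilde c_\pm$, yields \eqref{lcorrelations3_GDCP}.''
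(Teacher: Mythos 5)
Your proposal is correct and takes essentially the same route the paper intends: the paper justifies the general formula precisely by ``following the same lines as the proof of Proposition~\ref{any_point_label}'', i.e.\ the stationarity--duality--$t\to\infty$--almost-sure-extinction argument with $c_{\pm}$ replaced by $\widetilde c_{\pm}$, which is exactly what you carry out (the paper's displayed proof then immediately specializes to the annihilating dual of Corollary~\ref{Annihilating} to extract the one-point function, but the general statement rests on the same transfer you describe). Your reading of $\xi_N(\infty)$ in the statement as $\xi_{N+1}(\infty)$ is the intended one, and your extinction/bounded-convergence caveats are at the same level of rigor as the paper's own assertion that the dual dies out almost surely.
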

In the particular setting of Corollary \ref{Annihilating}, the one-point  function 
can be computed explicitly. 
\begin{prop}\label{explicit_one_point_GDCP}
    Consider a GDCP with birth rate $\lambda>0$, 
    diffusion coefficient $\mathcal{D} >0$, 
    boundary rates $\widetilde{\alpha}, \widetilde{\beta}, 
    \widetilde{\gamma}, \widetilde{\delta}$ 
    and death rates $\mu_1$ and $\mu_2$, with $\mu_1=\mu_2 + \lambda$. Then, 
\begin{equation*}
    \rho_{1}^{\rm GDCP}(x) = u_x(1-\widetilde c_{-}) + v_x(1-\widetilde c_{+}),
\end{equation*}
where with the constants 
\begin{equation}\label{abcddef}
\begin{split}
        \widetilde{a} & := \frac{\widetilde \alpha 
        + \widetilde \gamma}{\widetilde \alpha 
        + \widetilde \gamma + \mathcal{D}+ \lambda+ \mu_2}, \quad 
        \widetilde{b} 
        := \frac{\mathcal{D}+ \lambda }{\widetilde \alpha 
        + \widetilde \gamma + \mathcal{D}+ \lambda+ \mu_2}, \\
\widetilde{c} & :=  \frac{\widetilde \beta 
+ \widetilde \delta}{\widetilde \beta + \widetilde \delta 
+ \mathcal{D}+ \lambda+ \mu_2}, \quad
\widetilde{d} :=  \frac{\mathcal{D}+ \lambda }{\widetilde \beta
 + \widetilde \delta + \mathcal{D}+ \lambda+ \mu_2}
\end{split}
\end{equation}
and
\begin{eqnarray}
A & := &
\frac{\mathcal{D}+ \lambda }{\mathcal{D}+ \lambda+\mu_2}, \quad
r_{\pm} \, := \, \frac{1 \pm \sqrt{1-A^2}}{A} \label{rootsdef},\\
B_N & := & r_{-} (1 - \widetilde{b} r_{-}) 
(1 - \widetilde{d} r_{+}^{-1}) + r_{-}^N r_{+}^{1-N} 
(\widetilde{b} r_{+} - 1) (1 - \widetilde{d} r_{-}^{-1}) \label{BNdef}, \\
B'_N & := & \widetilde{a} (\widetilde{c} N+ 1-\widetilde{c})
 + (1-2 \widetilde{a}) \widetilde{c},
\label{BN2def}
\end{eqnarray}
the terms $u_x$ and $v_x$ are given by, for all $x\in\{1,\dots,N\}$,
\begin{eqnarray}
u_x & = & \begin{cases}
\displaystyle \frac{\widetilde{a}}{B_N} [(1 - \widetilde{d} r_{+}^{-1})  r_{-}^x 
+  (\widetilde{d} r_{-}^{-1}-1) r_{-}^N  r_{+}^{x-N} ] & \mu_2 > 0\\[4mm]
\displaystyle \frac{\widetilde{a}}{B'_N}  [1-\widetilde{c} + \widetilde{c}(N-x)]  & \mu_2 = 0,
    \end{cases}
\label{ux} \\
v_x & = & \begin{cases}
\displaystyle \frac{\widetilde{c}}{B_N} [(\widetilde{b} r_{+} - 1)]  r_{+}^{1-N} r_{-}^x + 
r_{-} (1 - \widetilde{b} r_{-}) 
r_{+}^{x-N} & \mu_2 > 0\\[4mm]
\displaystyle \frac{\widetilde{c}}{B'_N}
[1-\widetilde{a} + \widetilde{a} (x-1)] & \mu_2 = 0.
    \end{cases}
\label{vx} 
\end{eqnarray}
\end{prop}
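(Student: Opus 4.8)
The plan is to reduce the computation to a single killed random walk. The starting point is the one-point specialization ($\ell=1$) of the correlation formula \eqref{lcorrelations3_GDCP} of Proposition \ref{any_point_label_GDCP} (itself obtained from the duality relation \eqref{gendualfirstdef} exactly as in the proof of Proposition \ref{Proposition_one_point_DCP}): initializing the dual process at $\delta_x$, integrating $D(\eta,\delta_x)=1-\eta_x$ against $\nu^{\rm GDCP}$ and letting $t\to\infty$ yields
\[
1-\rho_1^{\rm GDCP}(x)=\mathbb{E}_{\delta_x}\big[\widetilde c_-^{\,\xi_0(\infty)}\widetilde c_+^{\,\xi_{N+1}(\infty)}\big].
\]
Here the hypothesis $\mu_1=\lambda+\mu_2$ of Corollary \ref{Annihilating} becomes essential: the dual GCP then has birth rate $\widehat\lambda=0$, so from the single seed $\delta_x$ no further particle is ever created. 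The lone particle diffuses (rate $\widehat{\mathcal D}=\mathcal D+\lambda$ to each neighbour), is killed in the bulk (rate $2\mu_2$ at an interior site, rate $\mu_2$ at sites $1$ or $N$, as read off from \eqref{anndual} and \eqref{dual_generalizedcontact generator}), or is absorbed into exactly one of the two sinks $0,N+1$. Hence $\xi_0(\infty)+\xi_{N+1}(\infty)\in\{0,1\}$, and partitioning over the three disjoint events ``killed in the bulk'', ``absorbed at $0$'', ``absorbed at $N+1$'' collapses the right-hand side to $\big(1-u_x-v_x\big)+\widetilde c_-\,u_x+\widetilde c_+\,v_x$, where $u_x:=\mathbb{P}_{\delta_x}[\xi_0(\infty)=1]$ and $v_x:=\mathbb{P}_{\delta_x}[\xi_{N+1}(\infty)=1]$. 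This is exactly $\rho_1^{\rm GDCP}(x)=u_x(1-\widetilde c_-)+v_x(1-\widetilde c_+)$, so it only remains to identify $u_x$ and $v_x$ with \eqref{ux}--\eqref{vx}.

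I would obtain $u_x$ from a first-step analysis of the killed walk. Dividing the outgoing rates at each site by the corresponding total rate, harmonicity gives, for $2\le x\le N-1$,
\[
u_x=\tfrac{A}{2}\,(u_{x-1}+u_{x+1}),\qquad A=\frac{\mathcal D+\lambda}{\mathcal D+\lambda+\mu_2},
\]
with $A$ as in \eqref{rootsdef}, together with the inhomogeneous boundary relations $u_1=\widetilde b\,u_2+\widetilde a$ and $u_N=\widetilde d\,u_{N-1}$ from the two edge sites, the coefficients $\widetilde a,\widetilde b,\widetilde c,\widetilde d$ being precisely the one-step probabilities \eqref{abcddef}; the probability $v_x$ satisfies the same bulk recursion with $v_1=\widetilde b\,v_2$, $v_N=\widetilde d\,v_{N-1}+\widetilde c$, and can alternatively be read off from $u$ by the left--right symmetry $x\mapsto N+1-x$, $(\widetilde a,\widetilde b)\leftrightarrow(\widetilde c,\widetilde d)$. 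The characteristic equation of the bulk recursion is $Ar^2-2r+A=0$, whose roots $r_\pm=(1\pm\sqrt{1-A^2})/A$ satisfy $r_+r_-=1$ and are exactly those in \eqref{rootsdef}. For $\mu_2>0$ one has $A<1$, the roots are distinct, and $u_x=C_1 r_-^x+C_2 r_+^x$; imposing the two boundary conditions gives a $2\times2$ system for $(C_1,C_2)$ whose determinant equals $B_N$ of \eqref{BNdef} up to a nonzero factor, and Cramer's rule delivers \eqref{ux}--\eqref{vx}. For $\mu_2=0$ one has $A=1$, a double root $r=1$, and $u_x=C_1+C_2 x$; the analogous $2\times2$ system has determinant $B'_N$ of \eqref{BN2def} and yields the second lines of \eqref{ux}--\eqref{vx}. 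That $B_N$ and $B'_N$ do not vanish follows because the killed kernel is strictly sub-Markovian (mass is lost at every death event), so a maximum principle forces the homogeneous boundary-value problem to have only the trivial solution, whence uniqueness of $(C_1,C_2)$.

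The conceptual content is concentrated in the first step: recognising, via Corollary \ref{Annihilating}, that the dual started from a single seed reduces to one killed nearest-neighbour walk, so that the sum over absorbed particles truncates at one term. Everything after that is the explicit solution of a constant-coefficient second-order recursion with mixed boundary data, and the only genuine care required is bookkeeping --- tracking which sink each first-step event leads to (in particular that absorption at the far sink contributes $0$ to the near-sink probability), and treating the degenerate case $\mu_2=0$ separately, where the characteristic root becomes double. Matching the resulting constants to the closed forms \eqref{BNdef}--\eqref{vx} is then a direct verification, most easily carried out by substituting \eqref{ux}--\eqref{vx} back into the boundary relations.
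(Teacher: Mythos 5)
Your proposal is correct and follows essentially the same route as the paper: the reduction $\rho_1^{\rm GDCP}(x)=u_x(1-\widetilde c_-)+v_x(1-\widetilde c_+)$ via the annihilating dual of Corollary \ref{Annihilating} (which the paper carries out in the proof of Proposition \ref{any_point_label_GDCP}), followed by a first-step analysis giving the recursions $u_1=\widetilde a+\widetilde b u_2$, $u_x=\tfrac{A}{2}(u_{x-1}+u_{x+1})$, $u_N=\widetilde d u_{N-1}$ (and the mirrored system for $v_x$), solved with the characteristic roots $r_\pm$ for $\mu_2>0$ and the linear (double-root) solution for $\mu_2=0$. The only differences are cosmetic (Cramer's rule plus a sub-Markovian uniqueness argument, versus the paper's direct elimination of the constants $p,q$ and $p',q'$), so there is nothing to fix.
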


\begin{rem}\label{rem:5}

(a) For $\mu_2>0$ one finds that the stationary bulk density 
\begin{equation}\label{rem5:eq1}
\rho^{\rm GDCP}_{bulk}(s) := \lim_{N\to\infty}\rho_1([sN])
\end{equation}
vanishes for all $s\in(0,1)$. A non-trivial  exponentially decaying density profile appears near the boundaries.

(b) In contrast, for $\mu_2=0$ a linear bulk density profile emerges, 
\begin{equation}\label{rem5:eq2}
\rho^{\rm GDCP}_{bulk}(s) 
= \frac{\widetilde \alpha}{\widetilde \alpha + \widetilde \gamma} (1-s) + \frac{\widetilde \beta}{\widetilde \beta + \widetilde \delta} s
= \frac{\widetilde \alpha}{\widetilde \alpha + \widetilde \gamma} 
- \frac{\widetilde \alpha  \widetilde \delta - \widetilde \beta \widetilde \gamma}{(\widetilde \alpha 
+ \widetilde \gamma)(\widetilde \beta + \widetilde \delta)} s
\end{equation}
This phenomenon is reminiscent of a superposition of shock measures appearing in
the voter model with {\it totally asymmetric} stirring (rather than symmetric
stirring as in the present case) on $\Lambda_N$ and open boundaries with 
$\widetilde{\gamma}=\widetilde{\delta}=0$ \cite{Krebs2003}. The invariant
measure of that model is a convex combination of shock product measures with
marginals $\mu(\eta_x) = 1$ for $1\leq x < x_s$ and $\mu(\eta_x) = 0$ for $x_s \leq x \leq N$
and uniform distribution of the microscopic shock position $x_s\in\Lambda_N$
which leads to a linear stationary density profile
and a simple form of the two-point
correlation function computed in \cite{Jafapour2004}.
For the present more complicated case of {\it symmetric} stirring and {\it arbitrary} 
boundary parameters we leave a detailed investigation of the two-point correlation function for future work.

(c)
Moreover, computing higher order correlations is in general a complicated task. 
For these models, the difficulty lies in the fact that if there are 
two or more dual particles, the death rates depend on the configuration.
We leave this for future work.
\end{rem}

\subsubsection{Time-dependence of the one-point function}\label{subsec:timedep}
For the choice $\mu_1 = \lambda + \mu_2$ the absence of 
particle birth or insertion in the dual process guarantees that the duality 
function yields a finite inhomogeneous system of ordinary differential equations
(ODE's) for the one-point function.

\begin{prop}\label{prop:time-dep}
We assume that $\mu_1 = \lambda + \mu_2$. Denoting by 
$\overline{\eta}_x := \eta_x - \rho_{1}(x)$ centered variables, 
and defining the function 
\begin{equation}\label{def:gxt}
g(x,t) := e^{2\mu_2 t} \exval{\overline{\eta}_x},
\end{equation}
where $\exval{\cdot}$ means expectation at time $t$ 
for an arbitrary initial measure $\nu$, we have
\begin{eqnarray}\label{lineareq3}
\ddt g(x,t) & = & (\mathcal{D} + \lambda)(g(x+1,t)
+ g(x-1,t) - 2 g(x,t)) , \quad 1 < x < N, \nonumber \\
\ddt g(1,t)  & = & (\mathcal{D} + \lambda)(g(2,t)
- g(1,t)) - (\widetilde{\alpha}+\widetilde{\gamma}-\mu_2) g(1,t),  \\
\ddt g(N,t) & = & (\mathcal{D} + \lambda)(g(N-1,t)
- g(N,t)) - (\widetilde{\beta}+\widetilde{\delta}-\mu_2) g(N,t) . \nonumber
\end{eqnarray}
\end{prop}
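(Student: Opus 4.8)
The plan is to exploit the duality relation from Theorem~\ref{dualityGCDmodel} in the special setting of Corollary~\ref{Annihilating}, where $\widehat\lambda = \lambda+\mu_2-\mu_1 = 0$ so the dual GDCP creates no particles. Start from the single-site dual configuration $\xi = \delta_y$ for $y\in\Lambda_N$; then $D(\eta,\delta_y) = 1-\eta_y$, and the duality relation \eqref{processduality} applied at time $t$ under an arbitrary initial law $\nu$ gives
\[
\exval{1-\eta_y} = \E_{\eta}\big[D(\eta(t),\delta_y)\big] = \E_{\delta_y}\big[D(\eta,\xi(t))\big],
\]
where the right-hand side is evaluated with $\eta$ distributed according to $\nu$ and $\xi(t)$ the dual process started from $\delta_y$. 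Because the dual has no birth, $\xi(t)$ contains at most one bulk particle (plus whatever has been absorbed into the sinks $0$ and $N+1$); differentiating in $t$ at $t=0$ turns the duality identity into a closed linear ODE for the one-point function, since the generator acting on $D(\eta,\cdot)(\delta_y)$ only produces configurations of the same ``at most one bulk particle'' type, decorated by a factor $\widetilde c_-$ or $\widetilde c_+$ when the particle is absorbed.

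Concretely, I would compute $\LL^{\rm GDCP,Dual} D(\eta,\cdot)(\delta_y)$ directly from the dual generator \eqref{dualgenerator_opengeneralizeddiffusivecontact} with local intensity matrix \eqref{anndual}. For an interior site $1<y<N$: the dual particle jumps left or right at rate $\widehat{\mathcal D} = \mathcal D+\mu_1-\mu_2 = \mathcal D$ each (using $\mu_1=\lambda+\mu_2$), and dies at rate $2\mu_2$ (the dual death rate with ``rates exchanged'', both neighbours empty); there is no creation. Reading off $D$ for the resulting configurations, $\ddt\exval{1-\eta_y}$ becomes a discrete Laplacian in $y$ with coefficient $\mathcal D + \lambda$ — the extra $\lambda$ arising because, writing things in terms of $1-\eta$, the ``$\lambda(1-\xi_x)(\xi_{x-1}+\xi_{x+1})$'' terms of the original generator (equivalently the off-diagonal $\mathcal D+\lambda$ entries of \eqref{anndual}) contribute a hopping-like term for the dual particle as well — together with a linear decay term $-2\mu_2\exval{1-\eta_y}$ coming from the dual death. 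At the boundary site $y=1$: the dual particle can hop right (rate $\mathcal D + \lambda$, reading the $\ell^{\rm Dual}$ matrix), die (rate $\mu_2$, only the right neighbour is relevant at the edge so the dual edge death rate is $\mu_2$), or be absorbed into the sink $0$ at rate $\widetilde\alpha+\widetilde\gamma$ with the duality function picking up a factor $\widetilde c_- = \widetilde\gamma/(\widetilde\alpha+\widetilde\gamma)$; assembling these and subtracting off the stationary value gives the stated boundary equation with the term $-(\widetilde\alpha+\widetilde\gamma-\mu_2)$. The site $y=N$ is symmetric.

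Finally I would centre and renormalise: set $\overline\eta_x := \eta_x - \rho_1(x)$, so $\exval{\overline\eta_x}$ satisfies the homogeneous version of the ODE system (the inhomogeneous constant terms cancel because $\rho_1$ is the stationary solution, which exists and is unique by irreducibility and finiteness of $\Omega_N$ as already noted), and then absorb the common decay by defining $g(x,t) := e^{2\mu_2 t}\exval{\overline\eta_x}$. The factor $e^{2\mu_2 t}$ exactly kills the $-2\mu_2$ in the bulk equation and shifts the boundary coefficients from $-(\widetilde\alpha+\widetilde\gamma)$, $-(\widetilde\beta+\widetilde\delta)$ to $-(\widetilde\alpha+\widetilde\gamma-\mu_2)$, $-(\widetilde\beta+\widetilde\delta-\mu_2)$, yielding \eqref{lineareq3}.

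The main obstacle is purely bookkeeping: getting the boundary rates right. One must be careful that at an edge site the ``generalized'' death rate of the dual process is $\mu_2$ (one empty neighbour outside the lattice is not counted), not $2\mu_2$, and that the sink-absorption at rate $\widetilde\alpha+\widetilde\gamma$ replaces $1-\eta_1$ by $\widetilde c_-(1-\eta_1)$ rather than removing the term, so its net contribution to $\ddt\exval{1-\eta_1}$ is $-(\widetilde\alpha+\widetilde\gamma)(1-\widetilde c_-)\exval{1-\eta_1} = -\widetilde\alpha\,\exval{1-\eta_1}$ before centring — and it is the interplay of this with the hop term and the edge death term, after centring and multiplying by $e^{2\mu_2 t}$, that must reproduce the coefficient $-(\widetilde\alpha+\widetilde\gamma-\mu_2)$ exactly. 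A careful term-by-term expansion from \eqref{anndual} together with \eqref{Ldualab}--\eqref{Ldual+} settles it, but this is the step where sign and rate errors are easy to make.
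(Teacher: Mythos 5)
Your overall route is the paper's own: under $\mu_1=\lambda+\mu_2$ the annihilating dual of Corollary~\ref{Annihilating} started from a single dual particle closes the evolution of the one-point function, giving the inhomogeneous linear system \eqref{lineareq}; the inhomogeneity is then removed by centring with the stationary profile and the common decay absorbed by the factor $e^{2\mu_2 t}$. Your bulk accounting is correct (hopping at rate $\mathcal{D}+\lambda$ in each direction and death at rate $2\mu_2$ for an interior dual particle with empty neighbours), apart from the arithmetic slip ``$\widehat{\mathcal{D}}=\mathcal{D}+\mu_1-\mu_2=\mathcal{D}$'': with $\mu_1=\lambda+\mu_2$ this equals $\mathcal{D}+\lambda$, which is in fact the value you end up using via the off-diagonal entries of \eqref{anndual}. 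The centring and rescaling step is also fine in principle.

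However, the boundary bookkeeping --- precisely the step you flag as delicate --- is wrong as you state it. When the dual particle at site $1$ is absorbed into the sink at $0$ (rate $\widetilde{\alpha}+\widetilde{\gamma}$), the new dual configuration has $\xi_0=1$ and an empty bulk, so the duality function \eqref{thisone} becomes the \emph{constant} $\widetilde{c}_{-}$: the factor $(1-\eta_1)$ disappears, it is not ``replaced by $\widetilde{c}_{-}(1-\eta_1)$''. Consequently the absorption contributes
$(\widetilde{\alpha}+\widetilde{\gamma})\bigl[\widetilde{c}_{-}-\exval{1-\eta_1}\bigr]=\widetilde{\gamma}-(\widetilde{\alpha}+\widetilde{\gamma})\exval{1-\eta_1}$
to $\ddt\exval{1-\eta_1}$, not $-\widetilde{\alpha}\,\exval{1-\eta_1}$; the two differ by $\widetilde{\gamma}\exval{\eta_1}$. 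With your version one would obtain
$\ddt\exval{\eta_1}=(\mathcal{D}+\lambda)(\exval{\eta_2}-\exval{\eta_1})-(\mu_2+\widetilde{\alpha})\exval{\eta_1}+\widetilde{\alpha}$,
which after centring and multiplying by $e^{2\mu_2 t}$ cannot reproduce the stated coefficient $-(\widetilde{\alpha}+\widetilde{\gamma}-\mu_2)$ (and centring with the true $\rho_1$ would in fact leave an inhomogeneity, since $\rho_1$ is stationary for the correct dynamics). The correct boundary equation is the paper's \eqref{lineareq}, with $-(\mu_2+\widetilde{\alpha}+\widetilde{\gamma})\exval{\eta_1}+\widetilde{\alpha}$, and the analogous correction applies at $x=N$. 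Once the absorption term is computed correctly, the remainder of your argument --- eliminating the constants $\widetilde{\alpha},\widetilde{\delta}$ via the stationary relations and rescaling by $e^{2\mu_2 t}$ --- goes through exactly as in the paper and yields \eqref{lineareq3}.
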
 
In the system \eqref{lineareq3}, we recognize
the ODE's for the time-dependent centered one-point function
of the open SSEP with diffusion coefficient $\widehat{D}=\mathcal{D} + \lambda$, and reservoir rates
$\widehat{\alpha},\widehat{\beta},\widehat{\gamma},\widehat{\delta}$, satisfying 
\begin{equation}\label{recognize}
    \widehat{\alpha} + \widehat{\gamma}
    =\widetilde{\alpha}+\widetilde{\gamma}-\mu_2,~\text{and } \widehat{\beta}+\widehat{\delta}
    =\widetilde{\beta}+\widetilde{\delta}-\mu_2.
\end{equation}
Following \cite{ExactSchutz}, this system can be solved in a closed form 
by a discrete Fourier transformation with a
reflected wave, in the special cases obtained from the four combinations 
of reservoir parameters given by
$\widetilde{\alpha}+\widetilde{\gamma} \in \{\mu_2,\mu_2 + \mathcal{D} + \lambda\}$ and
$\widetilde{\beta}+\widetilde{\delta} \in \{\mu_2,\mu_2 + \mathcal{D} + \lambda\}$.
We find surprising that the system of equations \eqref{lineareq3} matches the one for the space time empirical profile of an open SSEP, see equation (19) and below in \cite{gonccalves2017hydrodynamics}.
In particular, the choice 
$\widetilde{\alpha}+\widetilde{\gamma} 
= \widetilde{\beta}+\widetilde{\delta}= \mu_2$ corresponds to
$\widehat{\alpha}=\widehat{\beta}=\widehat{\gamma}=\widehat{\delta}=0$ 
for the boundary 
parameters of the SSEP (i.e. $\kappa = 0$ in 
\cite{gonccalves2017hydrodynamics}) and leads to the same system of equations, 
both for the SSEP with reflecting boundary which conserves the total number 
of particles and for our GCDP which does not conserve the total number of 
particles due to positive birth and death rates. We believe it would be 
interesting to prove the hydrodynamic limit for this non-conservative process 
with open boundary, in the same spirit as for the open finite volume SSEP.

\subsection{Proofs for Section \ref{sec:GDCP}}\label{subsec:proofs-gdcp}

\subsubsection{Proofs for Subsection \ref{subsec:dualGDCP}}
\label{subsec:pfs}

\begin{proof}[Proof of Theorem \ref{dualityGCDmodel}]  
In order to show the matrix duality relation of equation 
\eqref{matrixnotation}, as the generator acts on two sites, it is enough to show that
\begin{equation}
\label{localduality}
\ell G^{\otimes 2} = G^{\otimes 2} \left( \ell^{ \rm Dual} \right)^{T},
\end{equation} 
for $G,\ell$ defined respectively in \eqref{genericG} and \eqref{ell}; 
we have 
\begin{equation}\label{forG}
G^{\otimes 2} 
=\left(\ba{cccc}
a^2 & ab & ba & b^2 \\
ac & ad & bc & bd \\
ca & cb & da & db\\
c^2 & cd & dc & d^2
\ea\right) .
\end{equation} 
For any choice of the parameters $a,b,c,d$, this defines a 
self-duality function for the SSEP (see Remark \ref{parameters}), 
and we only have to take into consideration the action of the GCP.
The left hand side of equation \eqref{localduality} becomes
\begin{equation} \label{dua}
\ell G^{\otimes 2} 
 = \left(\ba{cccc}
0 & 0  & 0 & 0 \\
x [a \mu_1 - c \lambda] & a y \mu_1 - d x \lambda - \Delta \mathcal{D} & b x \mu_1 - c y \lambda  + \Delta \mathcal{D} & y [b \mu_1 - d \lambda] \\
x [a \mu_1 - c \lambda] & b x \mu_1 - c y \lambda  + \Delta \mathcal{D} & a y \mu_1 - d x \lambda - \Delta \mathcal{D} &   y [b \mu_1 - d \lambda]  \\
2c x \mu_2 & (d x + c y) \mu_2 & (d x + c y) \mu_2 & 2dy \mu_2
\ea\right),
\end{equation}
where $\Delta := ad-bc$, $x:= a-c$ and $y:=b-d$.
For the right hand side of \eqref{localduality}, we first consider 
a local generator $\ell^{\rm Dual}$ which describes a GDCP with possibly 
different rates: $\widetilde{\mathcal{D}}$ is the diffusion coefficient, 
$\widetilde{\lambda} $ the birth rate while $\widetilde{\mu}_1$ 
and $\widetilde{\mu}_2$ 
are the death rates. This gives the following dual local intensity matrix
\begin{equation}\label{duallocal}
\ell^{\rm Dual}
=  \left(\ba{cccc}
0 & 0 & 0 & 0 \\
\widetilde{\mu}_1 & -(\widetilde{\mathcal{D}}+\widetilde{\lambda}
+\widetilde{\mu}_1) & \widetilde{\mathcal{D}} & \widetilde{\lambda} \\
\widetilde{\mu}_1 & \widetilde{\mathcal{D}} 
& -(\widetilde{\mathcal{D}}+\widetilde{\lambda}+\widetilde{\mu}_1) 
& \widetilde{\lambda} \\
0 & \widetilde{\mu}_2 & \widetilde{\mu}_2 & - 2\widetilde{\mu}_2
\ea\right) \;.
\end{equation} 
Computing the right hand side of \eqref{localduality}, we get
\begin{align} \label{dua2}
&G^{\otimes 2} \left(\ell^{\rm Dual} \right)^T = \\
&\left(\ba{cccc}
0 & z (a \widetilde{\mu}_1 - b \widetilde{\lambda}) & z (a \widetilde{\mu}_1 - b \widetilde{\lambda}) & 2  \widetilde{\mu}_2 bz \\
0 & aw \widetilde{\mu}_1 - d z \widetilde{\lambda} - \Delta \widetilde{\mathcal{D}} & cz \widetilde{\mu}_1 - bw \widetilde{\lambda} + \Delta\widetilde{\mathcal{D}} 
& (bw+dz) \widetilde{\mu}_2 \\
0 & cz \widetilde{\mu}_1 - bw \widetilde{\lambda} + \Delta\widetilde{\mathcal{D}}  & aw \widetilde{\mu}_1 - d z \widetilde{\lambda} - \Delta \widetilde{\mathcal{D}}
& (bw +dz) \widetilde{\mu}_2 \\
0 & w [c \widetilde{\mu}_1 - d \widetilde{\lambda}] &  w [c \widetilde{\mu}_1 - d \widetilde{\lambda}] & 2  d w \widetilde{\mu}_2
\ea\right)\;, \nonumber
\end{align} 
where $z= a-b$ and $w=c-d$.
Since we are aiming to match expressions \eqref{dua} and \eqref{dua2}, 
we first notice that equality in the first column requires $x=0$, i.e. $a=c$, 
while equality in the first row requires  $z=0$, i.e. $a=b$. Thus $a=b=c$ yields  
$\Delta = - a(a-d)$, and we are left with 
\begin{equation} \label{dua3}
\ell G^{\otimes 2} 
 = (a-d) \left(\ba{cccc}
0 & 0  & 0 & 0 \\
0 & a  \mu_1 +a \mathcal{D} & -a \lambda -a \mathcal{D} & a\mu_1 - d \lambda \\
0 & -a \lambda -a \mathcal{D} & a  \mu_1 +a \mathcal{D} &   a\mu_1 - d \lambda \\
0 & a \mu_2 & a\mu_2 & 2d \mu_2
\ea\right)
\end{equation}
for the left hand side, and 
\begin{align} \label{dua23}
G^{\otimes 2} \left(\ell^{\rm Dual} \right)^T = (a-d)
\left(\ba{cccc}
0 & 0 & 0 & 0 \\
0 & a \widetilde{\mu}_1 +a \widetilde{\mathcal{D}} & -a \widetilde{\lambda} -a \widetilde{\mathcal{D}} 
& a\widetilde{\mu}_2 \\
0 & -a \widetilde{\lambda} -a \widetilde{\mathcal{D}}  & a \widetilde{\mu}_1 +a \widetilde{\mathcal{D}} 
& a \widetilde{\mu}_2 \\
0 & a \widetilde{\mu}_1 - d \widetilde{\lambda}&  a \widetilde{\mu}_1 - d \widetilde{\lambda}& 2 d \widetilde{\mu}_2
\ea\right)
\end{align} 
for the right hand side.

Consider $d=0$ in both equations \eqref{dua3}  and    \eqref{dua23}.
Then, the identity holds if we set
 $\widetilde{\mu}_2 = \mu_1$ and $\widetilde{\mu}_1 = \mu_2$, 
 $\widetilde{\mathcal{D}}= \mathcal{D} + \mu_1 - \mu_2$ and 
 $\widehat \lambda = \lambda + \mu_2 - \mu_1$. Furthermore, 
 without loss of generality, choosing $a=1$, we are left with the duality matrix 
\begin{equation} \label{dmat}
G = \left(\ba{cc}
1 & 1 \\
1 & 0
\ea\right),
\end{equation}
which corresponds to the duality function
\begin{equation}\label{dmathfct}
\mathrm{G}(\eta,\xi) = \prod_{x \in \Lambda_N} (1-\eta_x)^{\xi_x}.
\end{equation}
Note that $G$ is the same duality function as in \eqref{bulkduality}
(see Remark \ref{parameters}). 
Since the bulk duality function is the same as the one for the 
diffusive contact process, we can extend the result to the GDCP 
with the corresponding boundary parameters. Hence,
the duality function has the same form as for the diffusive contact process, 
that is: $D:\Omega_N\times \Omega_N^{\rm dual} \rightarrow \R$ given by
\begin{equation} \label{thisone}
    D(\e,\xi) = \Big(\frac{\widetilde \gamma}{ \widetilde \alpha + \widetilde \gamma} \Big)^{\xi_0}\prod_{x \in \Lambda_N} (1-\e_x)^{\xi_x}\Big(\frac{  \widetilde \beta}{\widetilde \beta + \widetilde \delta} \Big)^{\xi_{N+1}}.
\end{equation}
Notice that for $\mu_1=\mu_2=\frac{1}{2}$,  we recover Theorem \ref{Dual}.
\end{proof}
The proof of Corollary \ref{Annihilating} follows from Theorem \ref{dualityGCDmodel}.

\subsubsection{Proofs for subsection \ref{subsec:compcor}}
\label{subsec:proofs-compcor}

\begin{proof}[Proof of Proposition \ref{any_point_label_GDCP}]
In this setting, by Corollary \ref{Annihilating}, the dual process of the GDCP 
is a GDCP with no birth rate, diffusion coefficient $\mathcal{D} + \lambda$, 
boundary rates 
$0, \widetilde \alpha + \widetilde \gamma, 0, \widetilde \beta + \widetilde\delta$
 (purely absorbing boundaries), and death rates $\mu_2$ and $\lambda + \mu_2$. 
 Thus, the generator of the dual process is given by
    \begin{equation*}
    \mathcal{L}^{\rm GDCP, Dual}=\LL_-^{\rm Dual} 
    + \LL^{\rm GCP,Dual}+ \big(\mathcal{D}+ \lambda\big)\LL^{\rm SSEP} 
    + \LL_{+}^{\rm Dual},
\end{equation*}
where we recall that $\LL_-^{\rm Dual}$ and $\LL_+^{\rm Dual}$ 
are defined in \eqref{Ldualab} (with $\widetilde \alpha + \widetilde \gamma$ 
instead of $\alpha + \gamma$ and $\widetilde \beta + \widetilde \delta $ 
instead of $\beta + \delta$) and,
\begin{equation*}
\begin{split}
        \LL^{\rm GDCP,Dual} f(\xi) & = \sum_{x=2}^{N-1} \Big [\eta_x \big( 2 \mu_2 + \lambda  \left( \xi_{x-1} + \xi_{x+1}  \right) \big)  \Big] \big[f(\xi^x)-f(\xi) \big]\\ 
        &+ \Big[ \xi_1 \big( \mu_2 + \lambda \xi_2   \big)  \big]\big[f(\xi^1)-f(\xi) \Big]\\
        &+ \Big[ \xi_N \big( \mu_2 + \lambda  \xi_{N-1}   \big)   \big]\big[f(\xi^N)-f(\xi) \Big] \;.
\end{split}
\end{equation*}
By Theorem \ref{dualityGCDmodel} the duality function 
$D:\Omega_N\times \Omega_N^{\rm dual} \rightarrow \R$ 
is the same as for the DCP, that is,  given by \eqref{thisone}.
The same reasoning as in the proof of Proposition 
\ref{Proposition_one_point_DCP} yields that for $x\in \{1,\ldots,N\}$,
\begin{equation*}
    \begin{split}
        \mathbb{E}_{\nu^{\rm GDCP}}\big[1-\e_x] &= \mathbb{P}_{\delta_x}\big[\xi_0(\infty)=1\cap \xi_{N+1}(\infty)=0 \big]\widetilde c_{-}\\
        & + \mathbb{P}_{\delta_x}\big[\xi_0(\infty)=0\cap \xi_{N+1}(\infty)=1 \big]\widetilde c_{+}\\
        & +\Big(1- \mathbb{P}_{\delta_x}\big[\xi_0(\infty)=1\cap \xi_{N+1}(\infty)=0 \big]-\mathbb{P}_{\delta_x}\big[\xi_0(\infty)=0\cap \xi_{N+1}(\infty)=1 \big]\Big)\\
        &=: u_x\widetilde c_{-} + v_x\widetilde c_{+} + (1-u_x-v_x).
    \end{split}
\end{equation*}
Therefore,
\begin{equation*}
    \rho_{1}^{\rm GDCP}(x) = u_x(1-\widetilde c_{-}) + v_x(1-\widetilde c_{+}).
\end{equation*}
In this case the sum \eqref{lcorrelations3_GDCP} is finite due to the fact 
that the rate of birth is zero, so $\xi_0(\infty)$ and $\xi_{N+1}(\infty)$ 
can be at most equal to one.
\end{proof}

\begin{proof}[Proof of Proposition \ref{explicit_one_point_GDCP}]
To prove Proposition \ref{explicit_one_point_GDCP}, we are now 
left to compute $(u_x)_{1\leq x \leq N}$ and $(v_x)_{1\leq x \leq N}$. 
By conditioning on the first possible event (jump, birth, death, 
or absorption by a reservoir) of the process  starting from a particle 
at site $x$, we have the following discrete equations
\begin{itemize}
    \item For $(u_x)_{1\leq x \leq N}$:
\begin{equation*}
\begin{split}
u_1 & =  \widetilde{a} + \widetilde{b} u_2 \\
u_x & =   \frac{A}{2} (u_{x-1}+u_{x+1}) \quad \forall x\in \{2,\ldots ,N-1\}  \\
u_N & =  \widetilde{d} u_{N-1}, \nonumber
\end{split}
\end{equation*}
with the constants $\widetilde a, \widetilde b, \widetilde d,$ and $A$ 
defined in  \eqref{abcddef} and \eqref{rootsdef}.
\item For $(v_x)_{1\leq x \leq N}$:
\begin{equation*}
\begin{split}
v_1 & = \widetilde{b} v_2 \\
v_x & = \frac{A}{2} (v_{x-1}+v_{x+1}) \quad \forall x\in \{2,\ldots,N-1\}  \\
v_N & =\widetilde{c} + \widetilde{d} v_{N-1}, 
\end{split}
\end{equation*}
with the constants $\widetilde b, \widetilde c, \widetilde d,$ and $A$ 
defined in  \eqref{abcddef} and \eqref{rootsdef}.
\end{itemize}
Both recurrence relations are of the form
\begin{equation}\label{recurrence_relation}
\begin{split}
w_1 & = a + \widetilde{b}w_2 \\
w_x & = \frac{A}{2} (w_{x-1}+w_{x+1}) \quad \forall x\in \{2,\ldots,N-1\}  \\
w_N & = c + \widetilde{d}w_{N-1}
\end{split}
\end{equation}
with $a=\widetilde{a}$, 
$c=0$, 
for $(u_x)_{1\leq x \leq N}$ and $a=0$,
$c=\widetilde{c}$, 
for $(v_x)_{1\leq x \leq N}$. 
The properties of the solution of this recursion depend on 
$\mu_2$ as follows.\medskip

\noindent \underline{\textbf{Case 1: $\mu_2 > 0$}}\\
\noindent To solve the general recurrence relation, notice that the constants defined
in \eqref{rootsdef} satisfy $A<1$, $r_{-}<1$ and $r_{+}>1$.
Then, there are $p,q\in \R$ such that for $2\leq x \leq N-1$, 
    \begin{equation}\label{wx}
        w_x = pr_{-}^x + qr_{+}^x,
    \end{equation}
which follows from the bulk part of the 
recurrence relation \eqref{recurrence_relation}.
Furthermore, the recurrence relation involving sites 1 and 2 yields
$ pr_{-} + qr_{+} = a + \widetilde{b} [pr_{-}^2 + qr_{+}^2]$, therefore
$ p r_{-} (1 - \widetilde{b} r_{-})  
= a + q r_{+} (\widetilde{b} q r_{+} - 1)$. Since $0<r_{-}<1$
and $\widetilde{b}<1$, for both $u_x$ and $v_x$, one obtains
\begin{equation*}
p  = \frac{a + q r_{+} (\widetilde{b} r_{+} - 1)}{r_{-} (1 - \widetilde{b} r_{-})}.
\end{equation*}
In a similar fashion, the recurrence relation involving sites $N$ and $N-1$ yields
$ pr_{-}^N + qr_{+}^N = c + \widetilde{d} [pr_{-}^{N-1} + qr_{+}^{N-1}]$, 
therefore
$ p r_{-}^N (1 - \widetilde{d} r_{-}^{-1}) 
= c + q r_{+}^N (\widetilde{d} r_{+}^{-1} - 1)$. Since $r_{+}>1$
and $\widetilde{d}<1$, for both $u_x$ and $v_x$, one obtains
\begin{equation*}
q = \frac{c - p r_{-}^N (1 - \widetilde{d} r_{-}^{-1})}
{r_{+}^N (1 - \widetilde{d} r_{+}^{-1})}.
\end{equation*}
Finally, for $B_N$ defined in \eqref{BNdef}
one finds
\begin{eqnarray}\label{qp-q}
q & = & \frac{1}{B_N}
[c r_{-} (1 - \widetilde{b} r_{-}) + a r_{-}^N (\widetilde{d} r_{-}^{-1}-1)]
r_{+}^{-N} \\\label{qp-p}
p & = &  \frac{1}{B_N} [a (1 - \widetilde{d} r_{+}^{-1}) 
+ c r_{+}^{1-N} (\widetilde{b} r_{+} - 1)] .
\end{eqnarray}
This result yields \eqref{ux} and \eqref{vx} for $\mu_2>0$.
\medskip

\noindent\underline{\textbf{Case 2, $\mu_2 = 0$}}\\

\noindent In this case, $A=1$ and the bulk part of the recurrence relation \eqref{recurrence_relation} 
can be written $\Delta w_x = 0$, where $\Delta$ is the
discrete one-dimensional Laplacian. The general solution is the linear function
\begin{equation}\label{wx-again}
w_x = p' + q' x.
\end{equation}
The boundary condition at site 1 yields
$ p' + q' = a + \widetilde{b} (p' + 2 q')$ and therefore 
$p' (1-\widetilde{b}) = a + (2 \widetilde{b}-1) q'$.
Likewise, the boundary condition at site $N$ yields
$ p' + q' N = c + \widetilde{d} (p' + q' (N-1))$ and therefore
$ p' (1-\widetilde{d}) = c - q' [(1-\widetilde{d})N-\widetilde{d}]$. 
We then discuss the following cases:
\begin{itemize}
    \item[(i)] $\widetilde{b}=\widetilde{d}=1$. In this case, 
    we have closed boundary conditions, 
    and the empty lattice is the absorbing state, implying without further computation
    $u_x=v_x=\rho^{\rm GDCP}_1(x)=0$ for all $x\in\{1,\dots,N\}$.
    \item [(ii)] $\widetilde{b}=1$, $\widetilde{d}<1$. 
    This corresponds to a closed left boundary, 
    with $\widetilde{a}=0$, but with open right boundary, 
    leaving the invariant measure non-trivial. 
    The recurrence for $w_x$ yields $q'=0$ and one obtains
$u_x=0$, $v_x=1$ for all $x\in\{1,\dots,N\}$.
\item [(iii)]$\widetilde{d}=1$, $\widetilde{b}<1$. This corresponds 
to a closed right boundary, 
with $\widetilde{c}=0$, but with open left boundary, leaving 
the invariant measure non-trivial. 
The recurrence for $w_x$ yields $q'=0$ and one obtains
$u_x=1$, $v_x=0$ for all $x\in\{1,\dots,N\}$.
\item [(iv)]$\widetilde{d}<1$, $\widetilde{b}<1$. The recurrence yields
\begin{eqnarray}\label{q'p'-q'}
q'  & = & \frac{c (1-\widetilde{b}) - a (1-\widetilde{d})}{B'_N}  \\\label{q'p'-p'}
p' & = & \frac{a [(1-\widetilde{d})N+\widetilde{d}]  + c (2\widetilde{b}-1)}{B'_N},
\end{eqnarray}
for $B'_N$ defined in \eqref{BN2def}.
\end{itemize}
Hence, observing that for $\mu_2=0$, one has $1=\widetilde{a}+\widetilde{b}
=\widetilde{c}+\widetilde{d}$, we arrive at \eqref{ux} and \eqref{vx} for $\mu_2=0$,
which covers all four cases \textit{(i)-(iv)}.
\end{proof}

\begin{proof}[Proof of Proposition \ref{prop:time-dep}]
Assuming $\mu_1 = \lambda + \mu_2$ the duality function of the
GDCP yields the finite inhomogeneous system of ordinary differential equations
(ODE's)
\begin{eqnarray}\label{lineareq}
\ddt \exval{\eta_x} & = & (\mathcal{D} + \lambda)(\exval{\eta_{x+1}} 
+ \exval{\eta_{x-1}} - 2 \exval{\eta_{x}})
 - 2\mu_2 \exval{\eta_x}, \quad 1 < x < N \nonumber \\
\ddt \exval{\eta_1} & = & (\mathcal{D} + \lambda)(\exval{\eta_{2}} 
- \exval{\eta_{1}}) - (\mu_2+\widetilde{\alpha}
+\widetilde{\gamma}) \exval{\eta_1} + \widetilde{\alpha}, \\
\ddt \exval{\eta_N} & = & (\mathcal{D} + \lambda)(\exval{\eta_{N-1}} 
- \exval{\eta_{N}}) - (\mu_2+\widetilde{\beta}
+\widetilde{\delta}) \exval{\eta_N} + \widetilde{\delta} \;. \nonumber
\end{eqnarray}
The inhomogeneity arising from the constants $\widetilde{\alpha}$ and 
$\widetilde{\delta}$ can be removed by considering the centered variables
$\overline{\eta}_x := \eta_x - \rho_{1}(x)$, and by noting that invariance of the
measure yields
$\widetilde{\alpha} 
= (\mathcal{D} + \lambda + \mu_2+\widetilde{\alpha}+\widetilde{\gamma}) \rho_{1}(1) 
- (\mathcal{D} + \lambda)\rho_{1}(2) $ and 
$\widetilde{\delta} 
= (\mathcal{D} + \lambda + \mu_2+\widetilde{\beta}
+\widetilde{\delta}) \rho_{1}(N) - (\mathcal{D} + \lambda)\rho_{1}(N-1) $.
With the recursion \eqref{recurrence_relation}, which by linearity is also valid
for $\rho_{1}(x)$, we conclude that
\begin{eqnarray}\label{lineareq2}
\ddt \exval{\overline{\eta}_x} & = & (\mathcal{D} 
+ \lambda)(\exval{\overline{\eta}_{x+1}} 
+ \exval{\overline{\eta}_{x-1}} 
- 2 \exval{\overline{\eta}_{x}}) 
- 2\mu_2 \exval{\overline{\eta}_x}, \quad 1 < x < N ,\nonumber \\
\ddt \exval{\overline{\eta}_1} 
& = & (\mathcal{D} + \lambda)(\exval{\overline{\eta}_{2}} 
- \exval{\overline{\eta}_{1}})
 - (\mu_2+\widetilde{\alpha}+\widetilde{\gamma}) \exval{\overline{\eta}_1},  \\
\ddt \exval{\overline{\eta}_N}
 & = & (\mathcal{D} + \lambda)(\exval{\overline{\eta}_{N-1}} 
- \exval{\overline{\eta}_{N}}) 
- (\mu_2+\widetilde{\beta}+\widetilde{\delta}) \exval{\overline{\eta}_N},  \nonumber
\end{eqnarray}
that is \eqref{lineareq3}, which is a {\it homogeneous} system 
of first order ordinary differential equations.
\end{proof}  

\section{The susceptible-infectious-recovered (SIR) model}\label{sec:SIR}

We now consider another non conservative model, originally introduced 
in \cite{Kermack&al1927}, in terms of a nonlinear system of differential equations 
for the sizes of populations of three species of individuals which are 
subject to an infection/recovery mechanism.
The susceptible-infectious-recovered (SIR) model describes 
propagation of infections in the following sense:  a susceptible individual 
(characterized by its state $S$) can become an infectious individual 
(characterized by its state $I$), according to an infection rate $\beta$, 
if it is in contact with an infectious individual. An infectious individual recovers
 (then it is characterized by its state $R$) with recovery rate $\gamma$; 
 once an individual has recovered
 it stays immune, that is, it remains in state $R$ forever. 

Despite the simplicity of the original mean-field type model, it is known to capture 
important features of the temporal dynamics of an infection. However, 
only limited results 
are available if fluctuations (which inevitably occur in a real system) are taken
into account. This question was addressed in the SIR model of \cite{Schutzetal2008}
where particles evolve on a one dimensional space of sites, 
as in the diffusive contact process. 
However, contrary to the diffusive contact process 
studied in Section \ref{dcp section}), the model is defined on the 
infinite translation invariant lattice $\Z$, 
so that in particular, there are no reservoirs. 
Notice that the parameters $\beta$ and $\gamma$ 
for the SIR model have nothing to do with the reservoir parameters 
in the previous sections. 
The reason for which we keep this notation is that it is canonically 
used in the literature on the SIR model.

The stochastic evolution of the collection of particles in the system 
is governed by a Markov process  denoted by 
$\lbrace \eta_x(t), x\in \Z, t\geq 0 \rbrace$, 
with state space
\begin{equation}
    \mcb{S}:=\{S,I,R \}^{\mathbb{Z}},
\end{equation}
so that for $x\in \Z$, $a \in \{S,I,R\}$, 
$\eta_x = a $ means that  $x$ is in state $a$.
Using the notation in \cite{Schutzetal2008},
the (translation invariant)  transition rates  between nearest neighboring 
sites are given by
\begin{equation}\label{ratesSIR}
\begin{array}{llll}
Initial & & Final & Rate \\
\hline\hline \\[-2mm]
IS & \rightarrow & II & \beta \\[2mm]
SI & \rightarrow & II & \beta \\[2mm]
I & \rightarrow & R & \gamma 
\end{array}
\end{equation}
Note that this dynamics is not attractive (the necessary and sufficient
conditions required for attractiveness in \cite{Borrello} are not satisfied), 
so we cannot rely on monotonicity for this model.
\medskip
In view of the duality result for the SIR model, see Theorem \ref{SIRduality}, 
we give an alternative definition of the dynamics in terms of its generator 
$\mathcal{L}^{{\rm  SIR}}$. For $f: \mcb{S} \rightarrow \mathbb{R}$,
\begin{align}\nonumber
 \mathcal{L}^{{\rm  SIR}} f  (\eta) 
 = & \sum_{x\in \Z} \left\{
  \beta \  \eta_{x}^{I}\ \eta_{x+1}^{S} 
  \left[f(T_{x+1}^{I}\e) - f(\eta) \right] \right. 
  + \beta \  \eta_{x}^{I} \ \eta_{x-1}^{S} \left[f(T_{x-1}^{I}\e) - f(\eta) \right]\\
 &\qquad +  \left.\gamma \ \eta_{x}^{I} \ 
 \left[ f (T_x^{R}\e) - f(\eta) \right]\right\} \;, \label{sir gen}
\end{align}
where for $x,y\in \Z$ and $a\in \{S,I,R\}$
\begin{equation}\label{corresp}
    \e^{a}_x = \mathds{1}_{ \lbrace \e_x=a \rbrace},
\end{equation}
and $T_y^a$ is the operator acting on elements of $\mcb{S}$ 
which flips the state of $y$ into $a$, that is, for $z\in \Z$,
\begin{equation}\label{def:Ty} 
   (T_y^a\e)_z= \left\{
    \begin{array}{ll}
       \e_z ~ ~\text{if}~ z\neq y\\
       a ~ ~\text{if}~ z=y.
        \end{array}
\right.
\end{equation}
Our first goal is to find a duality relation for the SIR model 
with generator \eqref{sir gen}.

\subsection{Clusters as duality function} 
\label{sectioncluster}
This section is devoted to showing duality for the SIR model, 
for the purpose of studying the expected population size, 
and correlations as a function of time
for arbitrary initial distributions, rather than only for 
translation invariant ones, as studied in \cite{Schutzetal2008}. 
This is achieved using a 
duality relation between \textit{cluster functions} (defined below) with a bi-layered 
two-dimensional random walk on two copies of the semi-infinite lattice
$\N\times \Z$ that we shall label by $G$ and $J$ respectively. The random
walk is asymmetric on each lattice and lattice $G$ is absorbing, i.e., once
 the random walker has left lattice $H$ it cannot return to it. In addition, there is
a further (single) absorbing cemetery state that the random walker can reach from
lattice $G$.\\

To make this qualitative picture of the dual process precise and obtain
information about the original process, we use the notation 
 $ \langle \cdot \rangle_{\nu} : =  \mathbb{E}_\nu [ \ \cdot \ ] $ 
 to denote the expectation with respect to some 
initial distribution $\nu$, 
e.g. $\langle \eta_x^{a} (t) \rangle_{\nu} $ is the expected state 
at site $x$ and time $t$ with respect to $\nu$.
As already noticed in \cite{Schutzetal2008} quantities of interest are written 
in terms of high order correlation functions, called $n$-point cluster functions, 
for $n\in \N$, the set of strictly positive integers:
\begin{eqnarray}\label{def:G}
    G^{\nu}(r,n,t) & 
     :=& \left\langle \e^{I}_{r-1}(t)\left(\prod_{j=0}^{n-1}\e^S_{r+j}(t)\right)\e^{I}_{r+n}(t) \right\rangle_{\nu} \\\label{def:H}
        H^{\nu}(r,n,t) & 
       : =&\left\langle \left(\prod_{j=0}^{n-1}\e^S_{r+j}(t)\right)\e^{I}_{r+n}(t) \right\rangle_{\nu} \;,
\end{eqnarray}
where in what follows, to lighten the notation we do not write 
the dependence in time explicitly. 

Instead of the cluster $H$, it will be more convenient to define a different cluster 
which is of the same size as $G$. Namely, 
\begin{align}\label{def:J}
    J^{\nu}(r,n,t) & := 
    \left\langle \e^{R}_{r-1}(t)\left(\prod_{j=0}^{n-1}\e^S_{r+j}(t)\right)\e^{I}_{r+n}(t) \right\rangle_{\nu} \;. 
    \end{align}
    Using that for any $x \in \mathbb{Z}$, $\e_x^S + \e_x^R + \e_x^I =1 $, 
    the relation between these clusters is
\begin{equation}
J^{\nu}(r,n,t)  = H^{\nu}(r,n,t) - H^{\nu}(r-1,n+1,t) - G^{\nu}(r,n,t),
\end{equation}
so that given $J^{\nu}(r,n,t)$ and $G^{\nu}(r,n,t)$, the cluster function
$H^{\nu}(r,n,t)$ can be computed recursively.

In particular, one can gather information about the correlations of the SIR model 
by choosing short clusters, see also Remark \ref{clusterlenght} below.  
We are now ready to give the duality result for the SIR model. To this end, 
denote by \begin{equation}\label{def:Sdual}
\mcb{S}^{\rm dual}:=\big(\Z\times\N \times \{G,J\} \big) \cup \{\partial\},
\end{equation}
where $\partial$ will be a trap for the dual evolution.

\begin{thm} [Duality relation for SIR] \label{SIRduality}
The SIR model $(\e_t)_{t\geq 0}$ with generator \eqref{sir gen} is dual 
to a two dimensional biased random walk on two layers (see Figure \ref{DualSIR})
with duality function $d:\mcb{S} \times \mcb{S}^{\rm dual} \rightarrow \R$ given by: 
for $\e\in \mcb{S}$ and $\xi \in \mcb{S}^{\rm dual} $, 
if  $\xi=(r,n,i)\in \Z\times\N \times \{G,J\}$, 
  \begin{equation}\label{def:dualfct}
  \begin{split} 
      d(\e,(r,n,i))
      &:=\e^{I}_{r-1}\e^S_r\cdots \e^S_{r+n-1}\e^{I}_{r+n}\mathds{1}_{i=G}
      +\e^R_{r-1} \e^S_r\cdots \e^S_{r+n-1}\e^{I}_{r+n}\mathds{1}_{i=J}\\
        &=\left[\e^{I}_{r-1}\mathds{1}_{i=G}+\e^R_{r-1}\mathds{1}_{i=J}\right]
        \left(\prod_{j=0}^{n-1}\e^S_{r+j}\right)\e^{I}_{r+n},
  \end{split}
\end{equation}
and \begin{equation}\label{eq:dtrap}d(\e,\partial):=0.\end{equation}
The dual generator acts on local functions
$f:\mcb{S}^{\rm dual}\rightarrow \R$ as follows: 
for $\xi\in \mcb{S}^{\rm dual}$, if $\xi=\partial$,
\begin{equation}\label{eq:ftrap}\mathcal{L}^{\rm dual}f(\partial)=0,\end{equation}
while if $\xi=(r,n,i)\in \Z\times\N \times \{G,J\}$,
\begin{equation}\label{sirdualgen}
\begin{split}
        \mathcal{L}^{\rm dual}f(r,n,i) &= \beta\mathds{1}_{i=G}[ f(r-1,n+1,i) - f(r,n,i)\big]
         + \beta [ f(r,n+1,i) - f(r, n,i)\big]\\
        &+\gamma \mathds{1}_{i=J}[ f(r,n,\phi(i)) - f(r, n,i)\big]
        + 2\gamma \mathds{1}_{i=G}[ f(\partial) - f(r,n,i)\big],
\end{split}
\end{equation}
where we define the flip operator $\phi$ as
\begin{equation}\label{def:phiflip}
    \phi:\{G,J\} \rightarrow \{G,J\},
    ~ \text{such that }~ \phi(G)=J~ ~ \text{and}~ ~\phi(J)=G \;.
\end{equation}
\end{thm}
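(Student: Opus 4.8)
The plan is to verify the generator criterion of Proposition~\ref{Generatorapproach}, i.e. to check that
\begin{equation*}
\big(\mathcal{L}^{\rm SIR} d(\cdot,\xi)\big)(\eta)=\big(\mathcal{L}^{\rm dual} d(\eta,\cdot)\big)(\xi)
\end{equation*}
for every $\eta\in\mcb{S}$ and $\xi\in\mcb{S}^{\rm dual}$, and then to upgrade this to the process duality \eqref{processduality}. The upgrading step is routine here: $d$ is $\{0,1\}$-valued hence bounded, $d(\cdot,\xi)$ depends on only finitely many coordinates (so it lies in the core of $\mathcal{L}^{\rm SIR}$ and the sum over $x$ in \eqref{sir gen} has finitely many nonzero terms), and the dual chain has total jump rate bounded by $2\beta+2\gamma$, hence is non-explosive; one differentiates $t\mapsto\mathbb{E}_\eta[d(\eta_t,\xi)]$ and $t\mapsto\mathbb{E}_\xi[d(\eta,\xi_t)]$, applies the generator identity, and concludes by uniqueness of the solution of the resulting linear system, exactly as in \cite[Theorem~3.42]{liggett2010continuous}. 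The case $\xi=\partial$ is immediate from \eqref{eq:dtrap}--\eqref{eq:ftrap}, both sides vanishing since $d(\cdot,\partial)\equiv 0$.

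The heart of the proof is the computation for $\xi=(r,n,i)$. First I would record the structural fact that $d(\cdot,(r,n,i))$ is the indicator of a single \emph{cluster pattern} on the window $\{r-1,\dots,r+n\}$: states $S$ at $r,\dots,r+n-1$, state $I$ at $r+n$, and state $I$ at $r-1$ if $i=G$, resp. state $R$ at $r-1$ if $i=J$. Since a single SIR move changes exactly one coordinate, only two families of $\eta$ contribute to $\mathcal{L}^{\rm SIR}d(\cdot,(r,n,i))(\eta)$: (a) $\eta$ already realizes the pattern, so some moves destroy it; (b) $\eta$ differs from the pattern at one coordinate in a way that one move creates it. In case (a), for $i=G$ the destructive moves are the two recoveries $I\to R$ at $r-1$ and at $r+n$ (rate $\gamma$ each) and the two infections $r-1\to r$ and $r+n\to r+n-1$ (rate $\beta$ each), yielding $-(2\beta+2\gamma)\,d(\eta,(r,n,G))$; for $i=J$ the left endpoint $R$ is inert, so only the recovery at $r+n$ and the infection $r+n\to r+n-1$ survive, yielding $-(\beta+\gamma)\,d(\eta,(r,n,J))$. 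In case (b), the only feeders are: for $i=G$, the clusters $(r-1,n+1,G)$ (an external $I$ at $r-2$ infects $r-1$) and $(r,n+1,G)$ (an external $I$ at $r+n+1$ infects $r+n$), each contributing $+\beta$; for $i=J$, the cluster $(r,n+1,J)$ (external $I$ at $r+n+1$ infects $r+n$), contributing $+\beta$, and the cluster $(r,n,G)$ (the left $I$ recovers to $R$), contributing $+\gamma$.

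Collecting terms gives
\begin{align*}
\mathcal{L}^{\rm SIR}d(\cdot,(r,n,G))(\eta)&=\beta\,d(\eta,(r-1,n+1,G))+\beta\,d(\eta,(r,n+1,G))-(2\beta+2\gamma)\,d(\eta,(r,n,G)),\\
\mathcal{L}^{\rm SIR}d(\cdot,(r,n,J))(\eta)&=\beta\,d(\eta,(r,n+1,J))+\gamma\,d(\eta,(r,n,G))-(\beta+\gamma)\,d(\eta,(r,n,J)),
\end{align*}
which is precisely $\mathcal{L}^{\rm dual}d(\eta,\cdot)$ evaluated at $(r,n,G)$ and $(r,n,J)$ according to \eqref{sirdualgen}: the original cluster shrinks while the dual walker grows, the $\gamma$-flip $J\mapsto G$ in the dual mirrors the $\gamma$-recovery carrying a $G$-cluster to a $J$-cluster, and the two recoveries that kill a $G$-cluster correspond to absorption into $\partial$ at rate $2\gamma$.

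I expect the main obstacle to be the exhaustive bookkeeping in case (b): one must argue that no other $\eta$ can reach the pattern in one move — recoveries cannot produce the internal $S$'s or the right-endpoint $I$; an $(r-1,n+1,J)$-cluster does \emph{not} feed an $(r,n,J)$-cluster because the two differ at two coordinates; moves acting strictly outside $\{r-1,\dots,r+n\}$ leave $d(\cdot,(r,n,i))$ unchanged and so contribute nothing — and, symmetrically, that from a cluster configuration the only $d$-changing moves are those listed in (a). Tracking the $G$/$J$ asymmetry at the left endpoint throughout, and checking the minimal case $n=1$ where the window has only three sites, is where the argument is most error-prone, but it is entirely elementary.
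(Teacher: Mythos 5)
Your proposal is correct and follows essentially the same route as the paper: both verify the generator identity $\mathcal{L}^{\rm SIR}d(\cdot,(r,n,i))(\eta)=\mathcal{L}^{\rm dual}d(\eta,\cdot)(r,n,i)$ by enumerating the finitely many SIR transitions that change the cluster indicator, the paper organizing this bookkeeping by splitting the generator into the operators $A_y,B_y,C_y$ and summing over $y$ (Lemma \ref{abc lemma}), while you organize it by pattern-destroying versus pattern-creating moves — the resulting identities for the $G$- and $J$-clusters coincide with \eqref{sumAy}--\eqref{sumCy}. Your additional remarks on upgrading the generator relation to process duality (boundedness of $d$, finite support, non-explosiveness of the dual walk) are sound and consistent with the paper's appeal to Proposition \ref{Generatorapproach}.
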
 
\begin{rem}\label{rk:SIRdual}

By \eqref{eq:ftrap}, $\partial$ 
is indeed a trap: once the process reaches $\partial$, it remains there forever. 
Furthermore, the generator of the dual dynamics defines a bilayer random walk with the following transitions 
on the dual state space, illustrated in Fig. \ref{DualSIR}:
\begin{eqnarray}\label{SIRdual-1}
&&(r,n,i)\rightarrow (r,n+1,i) ~ \text{at rate}~ \beta 
\text{ for both layers: } i\in\{G,J\} \\\label{SIRdual-2}
&&(r,n,G)\rightarrow (r-1,n+1,G)~ \text{at rate}~ \beta\text{ only for layer } G. 
\end{eqnarray} 
 It is only possible to go from layer $J$ to layer  $G$, 
 (but not the other way around): 
\begin{eqnarray} \label{SIRdual-3}
&&
(r,n,J)\rightarrow (r,n,G)~ \text{at rate}~ \gamma, 
\end{eqnarray}
and absorption in the trap is only possible if the walker is in layer $G$: 
\begin{eqnarray}\label{SIRdual-4}
&&(r,n,G) \rightarrow \partial~ \text{at rate}~ 2 \gamma. 
\end{eqnarray}
Notice the non-translation invariance nature of the dynamics 
in the second transition \eqref{SIRdual-2}.

%
%
%
%
\end{rem} 

\begin{figure}
        \centering
        \includegraphics[scale=0.4]{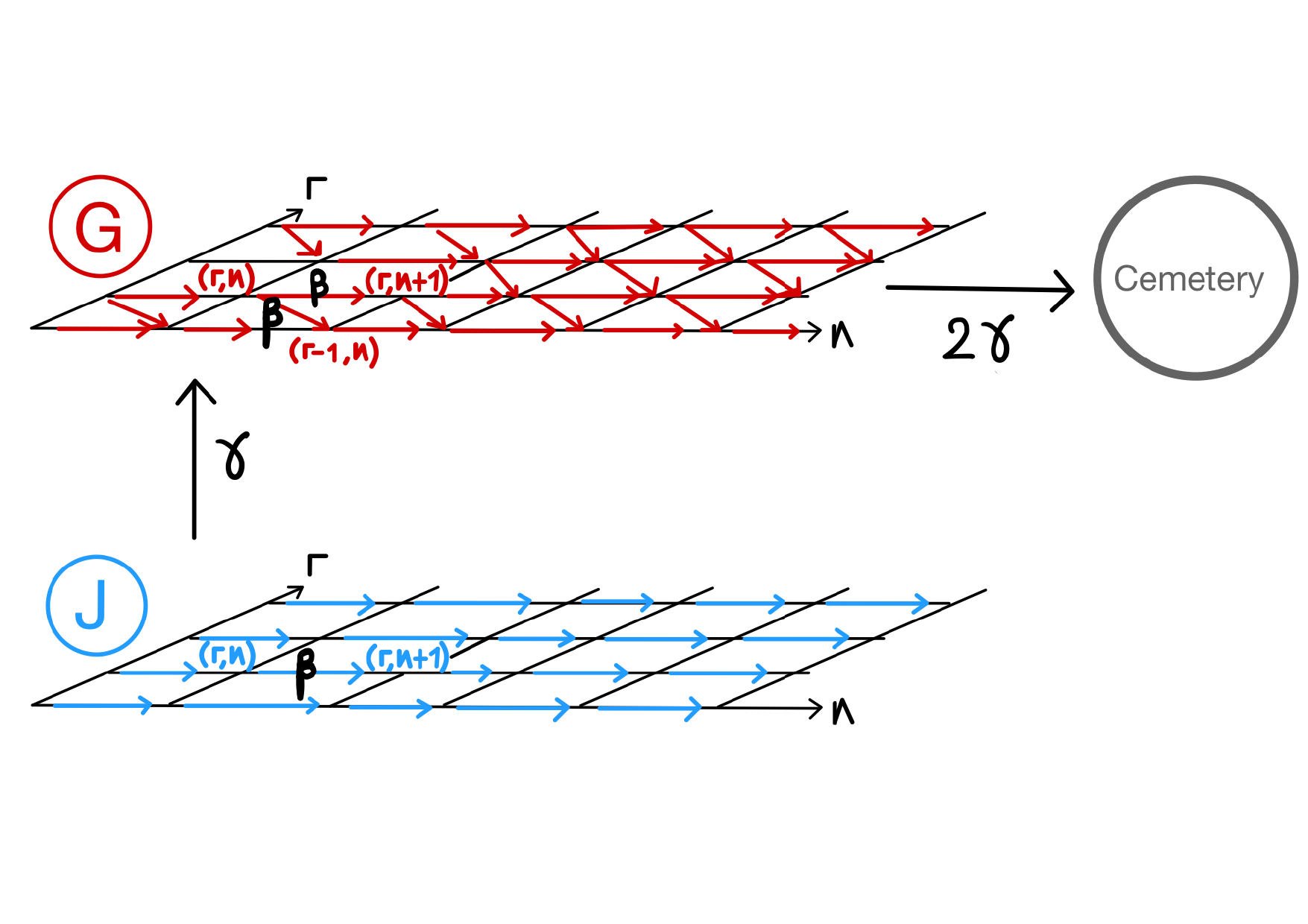}
        \caption{Dual dynamics for the SIR model}\label{DualSIR}
    \end{figure}

\subsection{Applications of the duality relation}\label{subsec:appldual}
Consider a measure $\nu$ on $\{S,I,R\}^{\Z}$ and 
recall the $n$-point cluster functions 
$G^{\nu}(r,n,t)$ and $J^{\nu}(r,n,t)$ defined respectively in \eqref{def:G}
and \eqref{def:J}. Note that $J^{\nu}(r,n,t)$ is not the same as  
$H(n)$ in \cite{Schutzetal2008}, because of the term $ \mathds{1}_{\e_{r-1}(t)=R}$ 
but we still obtain closed equations for $G$ and $J$ type chains.

\begin{rem}[SIR correlations for small clusters]\label{clusterlenght}
Choosing $n = 1$ the duality functions can give information on the three point 
correlation functions given in terms of the clusters $G$ or $J$, 
which can then be used 
to get two point correlation functions via the cluster $H$.
\end{rem}

\subsubsection{Coupled equations for the cluster functions}\label{subsub:coupledforcluster}

In \cite{Schutzetal2008} (see equation (20)), two first order differential equations 
are provided and explicitly solved for $G^{\nu}(r,n,t)$ and $H^{\nu}(r,n,t)$, 
provided that the initial measure $\nu$ is translation invariant. Therefore, 
the initial measure and the dynamics (we are on $\Z$), being both translation invariant, 
one can drop the dependence on $r$ and write $G^{\nu}(r,n,t)=G^{\nu}(n,t) $ 
and $H^{\nu}(r,n,t)=H^{\nu}(n,t) $. 
We first show that one can recover these equations for $G^{\nu}(n,t)$ 
and $J^{\nu}(n,t)$, by only using the duality relation of Theorem \ref{SIRduality}.

\begin{prop}\label{prop:cfeq20}
Assume that the initial distribution $\nu$ is translation invariant.
Fix $n\in \mathbb{N}$, then we have
\begin{align}\label{analogous}
       \begin{cases}
       \dfrac{ d G^{\nu}(n,t)}{ dt} = - 2(\gamma + \beta)G^{\nu}(n,t) + 2\beta G^{\nu}(n+1,t)\\
        \dfrac{ d J^{\nu}(n,t)}{ dt} = - (\gamma + \beta) J^{\nu}(n,t)+\beta J^{\nu}(n+1,t) + \gamma  G^{\nu}(n,t) \;. 
       \end{cases}
\end{align}
The solutions are given by
\begin{eqnarray}\label{solG}
    G^{\nu}(n,t) &=& e^{-2(\gamma+\beta)t}\sum_{\ell\geq 0}\frac{(2\beta t)^{\ell}}{\ell !}G^{\nu}(\ell +n,0)\\ \nonumber
  J^{\nu}(n,t) &=& e^{-(\gamma+\beta)t}\sum_{\ell \geq 0}
  \frac{(\beta t)^{\ell}}{\ell !}J^{\nu}(\ell + n,0)\\
  && \quad+ \gamma \int_0^t e^{-(\gamma+\beta)(t-s)}\sum_{\ell \geq 0}\frac{(\beta (t-s))^{\ell}}{\ell !}G^{\nu}(\ell + n,s)ds.  \label{solJ}
\end{eqnarray}
Moreover
\begin{align} \label{newcluster}
    J^{\nu}(n,t) &=H^{\nu}(n,t) - H^{\nu}(n+1,t) -G^{\nu}(n,t) \;.
\end{align}
\end{prop}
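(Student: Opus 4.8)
The plan is to derive the ODE system \eqref{analogous} directly from the duality relation \eqref{processduality} applied to the duality function $d$ of Theorem \ref{SIRduality}, and then to solve the resulting system by elementary means. First I would observe that, by the definition \eqref{def:dualfct} of $d$ and the definitions \eqref{def:G}, \eqref{def:J} of the cluster functions, one has for every $r\in\Z$, $n\in\N$ and $t\ge 0$
\begin{equation*}
\langle d(\eta_t,(r,n,G))\rangle_\nu = G^\nu(r,n,t), \qquad \langle d(\eta_t,(r,n,J))\rangle_\nu = J^\nu(r,n,t),
\end{equation*}
so that by translation invariance of $\nu$ (and of the SIR dynamics) the left sides depend only on $n,t$. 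Duality gives $\langle d(\eta_t,\xi)\rangle_\nu = \E_\xi\, \langle d(\eta,\xi_t)\rangle_\nu$; differentiating at $t=0^+$ and using that $\partial$ is a trap with $d(\eta,\partial)=0$, the right side is governed by the dual generator \eqref{sirdualgen}. The key point is that the dual generator, read on the observable $\xi\mapsto\langle d(\eta,\xi)\rangle_\nu$, produces exactly the right-hand sides of \eqref{analogous}: starting from $(r,n,G)$, the two $\beta$-moves send $n\to n+1$ (one of them also shifting $r$, which is invisible after averaging over the translation-invariant $\nu$), contributing $2\beta G^\nu(n+1,t)$, while the rate-$2\gamma$ move to $\partial$ contributes $-2\gamma G^\nu(n,t)$ and the outgoing total rate $2\beta$ contributes $-2\beta G^\nu(n,t)$; from $(r,n,J)$, the single $\beta$-move gives $\beta J^\nu(n+1,t)$, the rate-$\gamma$ flip $J\to G$ gives $+\gamma G^\nu(n,t)$, and the total outgoing rate $\beta+\gamma$ gives $-(\beta+\gamma)J^\nu(n,t)$.

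Once \eqref{analogous} is established, the solutions \eqref{solG}, \eqref{solJ} follow by standard ODE arguments. The $G$-equation is a closed linear system in $n$: writing $G^\nu(n,t) = e^{-2(\gamma+\beta)t}\,\widetilde G(n,t)$ removes the diagonal term and leaves $\partial_t \widetilde G(n,t) = 2\beta\,\widetilde G(n+1,t)$, whose solution with the given initial data is the Poisson-type series $\widetilde G(n,t) = \sum_{\ell\ge 0}\frac{(2\beta t)^\ell}{\ell!}G^\nu(\ell+n,0)$; this is checked by term-by-term differentiation, the convergence being guaranteed by $0\le G^\nu\le 1$. For $J$, substituting $J^\nu(n,t)=e^{-(\gamma+\beta)t}\widetilde J(n,t)$ turns the second equation of \eqref{analogous} into $\partial_t\widetilde J(n,t) = \beta\,\widetilde J(n+1,t) + \gamma e^{(\gamma+\beta)t}G^\nu(n,t)$, which by Duhamel's formula (variation of constants, treating the $\beta$-shift as the generator of the pure-birth semigroup already used for $G$) yields precisely \eqref{solJ}. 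Finally, \eqref{newcluster} is immediate from the identity $\eta_x^S+\eta_x^R+\eta_x^I=1$: writing $\eta_{r-1}^R = 1-\eta_{r-1}^S-\eta_{r-1}^I$ inside the expectation defining $J^\nu(r,n,t)$ and splitting the three terms reproduces, respectively, $H^\nu(r,n,t)$, $-H^\nu(r-1,n+1,t)$ (which equals $-H^\nu(n+1,t)$ by translation invariance), and $-G^\nu(r,n,t)$; this was already recorded before the statement of the proposition.

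The main obstacle I anticipate is bookkeeping rather than conceptual: one must be careful that the non-translation-invariant shift $r\to r-1$ in the dual transition \eqref{SIRdual-2} is genuinely harmless here — it is, precisely because we have assumed $\nu$ translation invariant, so that $G^\nu(r-1,n+1,t)=G^\nu(n+1,t)$ — and one should double-check the combinatorial factors ($2\beta$ versus $\beta$, $2\gamma$ versus $\gamma$) coming from the two layers and the two directions, since an off-by-a-factor-of-two error here would silently propagate into the exponents and series in \eqref{solG}--\eqref{solJ}. A secondary technical point is justifying the term-by-term differentiation of the infinite series and the interchange of $\frac{d}{dt}$ with $\E_\xi$ in the duality identity; both are routine given the uniform bound $|d|\le 1$ and the fact that the dual walk has bounded jump rates, so dominated convergence applies.
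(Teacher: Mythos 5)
Your proposal is correct and follows essentially the same route as the paper: the ODEs \eqref{analogous} are read off from the duality relation by letting the dual generator \eqref{sirdualgen} act on $\xi\mapsto\langle d(\eta,\xi)\rangle_\nu$ (with translation invariance absorbing the $r\to r-1$ shift and $d(\eta,\partial)=0$ killing the trap term), the solutions follow by the standard exponential substitution/Duhamel argument, and \eqref{newcluster} comes from $\eta^S_{r-1}+\eta^I_{r-1}+\eta^R_{r-1}=1$. The only difference is presentational: you verify the series solutions explicitly, while the paper cites the analogous equations in \cite{Schutzetal2008} and solves the $J$-equation recursively treating $J^{\nu}(n+1,t)$ and $G^{\nu}(n,t)$ as inhomogeneities.
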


\subsubsection{Non-translation invariant case}\label{subsub:nontranslinv}

In \cite{Schutzetal2008}, the cluster correlation functions 
have not been solved in the case 
where $\nu$ is not translation invariant, because the equations 
obtained in $G$ and $J$ or $H$ 
are not closed. Our goal now is to get an explicit expression 
of such cluster functions 
in the non translation invariant case, by using the dual process.

Denote by $\xi$ an element of $\mcb{S}^{\rm dual}$. For any SIR configuration $\e$, 
by abuse of notation, we write
\[G^{\eta}(r,n,t)=:G^{\delta_\eta}(r,n,t)
~ \text{and}~ J^{\eta}(r,n,t)=:J^{\delta_\eta}(r,n,t),  \]
the cluster correlation functions when the system is initially in state $\e$. 
Note that the measure 
$\delta_{\e}$ is no longer a translation invariant measure for the SIR process 
(unless it is 
a constant configuration, in which case the system remains unchanged).
\begin{thm}\label{nontranslation1}
The $G^{\eta}(r,n,t)$ cluster is given by:
\begin{equation} \label{eqG}
    G^{\eta}(r,n,t)=e^{-2(\gamma+\beta)t}\sum_{\underset{a\leq b}{(a,b)\in (\mathbb{N})^2}}
    \frac{\binom{b}{a}}{2^b}\frac{(2\beta t)^b}{b!}G^{\eta}(r-a,n+b,0).
\end{equation}
\end{thm}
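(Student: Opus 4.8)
The plan is to read off $G^{\eta}(r,n,t)$ from the duality relation of Theorem~\ref{SIRduality} with dual initial state $\xi=(r,n,G)$, and then to compute the law of the dual walk, which started from layer $G$ is a killed two-type spatial random walk with position-independent rates.

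First I would observe that, directly from the definition \eqref{def:dualfct} of the duality function and the definition \eqref{def:G} of the cluster function, one has $G^{\eta}(r,n,t)=\mathbb{E}_{\eta}[d(\eta(t),(r,n,G))]$ and $G^{\eta}(r,n,0)=d(\eta,(r,n,G))$. The duality identity \eqref{processduality} with $x=\eta$ and $y=(r,n,G)$ then gives
\[
G^{\eta}(r,n,t)=\mathbb{E}_{(r,n,G)}\big[d(\eta,\xi(t))\big],
\]
where $(\xi(t))_{t\ge 0}$ denotes the dual bilayer walk started from $(r,n,G)$; the interchange of expectation with the infinite sum that appears below is legitimate since $0\le d\le 1$, by Tonelli.

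Next I would analyse $(\xi(t))_{t\ge 0}$ from the dual generator \eqref{sirdualgen} (cf.\ Remark~\ref{rk:SIRdual}). The key structural observation is that layer $G$ is \emph{closed}: from any $(r',n',G)$ the only transitions are to $(r'-1,n'+1,G)$ at rate $\beta$, to $(r',n'+1,G)$ at rate $\beta$, and to the trap $\partial$ at rate $2\gamma$ --- there is no $G\to J$ transition. Since all these rates are independent of $(r',n')$, the absorption time into $\partial$ is an $\mathrm{Exp}(2\gamma)$ variable independent of the spatial motion, and until absorption the spatial part is a rate-$2\beta$ jump process each of whose jumps is, independently and with probability $1/2$, of ``type A'' ($n'\mapsto n'+1$) or ``type B'' ($n'\mapsto n'+1$ and $r'\mapsto r'-1$). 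A short computation with independent Poisson clocks then yields, for integers $0\le a\le b$,
\[
\mathbb{P}_{(r,n,G)}\big[\xi(t)=(r-a,\,n+b,\,G)\big]
= e^{-2(\beta+\gamma)t}\,\frac{(2\beta t)^{b}}{b!}\,\frac{1}{2^{b}}\binom{b}{a},
\]
the factor $e^{-2\gamma t}$ being the probability of no absorption by time $t$, the factor $e^{-2\beta t}(2\beta t)^{b}/b!$ the probability of exactly $b$ jumps, and $\binom{b}{a}2^{-b}$ the probability that $a$ among those $b$ jumps are of type~B; the remaining mass sits on $\partial$.

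Finally I would insert this law into the duality identity. Since $d(\eta,\partial)=0$ by \eqref{eq:dtrap} the absorbed contribution drops, and $d(\eta,(r-a,n+b,G))=G^{\eta}(r-a,n+b,0)$ by the first step, so
\[
G^{\eta}(r,n,t)=e^{-2(\beta+\gamma)t}\sum_{0\le a\le b}\frac{\binom{b}{a}}{2^{b}}\,\frac{(2\beta t)^{b}}{b!}\,G^{\eta}(r-a,\,n+b,\,0),
\]
which is \eqref{eqG} (the $a=b=0$ term, needed for consistency at $t=0$, is included; one should match the summation range against the index convention used in the statement). The only genuinely delicate point is the analysis of the dual walk --- recognizing that layer $G$ is closed, and then decoupling the constant-rate killing from the two-type Poissonian spatial motion so as to extract the binomial--Poisson weights; everything else is routine bookkeeping.
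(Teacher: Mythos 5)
Your proposal is correct and follows essentially the same route as the paper: duality with dual initial state $(r,n,G)$, the observation that layer $G$ is closed with constant rates so the $\mathrm{Exp}(2\gamma)$ killing decouples from a rate-$2\beta$ two-type Poissonian spatial walk, and the resulting binomial--Poisson weights $e^{-2(\beta+\gamma)t}\binom{b}{a}2^{-b}(2\beta t)^b/b!$ plugged back into the duality identity (the paper phrases the decoupling as a conditioning on not hitting the trap, which is equivalent to your use of $d(\eta,\partial)=0$).
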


\begin{thm}\label{nontranslation2}
The  $J^{\eta}(r,n,t)$ cluster is given by:
\begin{equation} \label{eqJ}
\begin{split}
        &J^{\eta}(r,n,t) 
        = e^{-(\gamma+\beta) t}\sum_{k\geq 0}\frac{(\beta t)^k}{k!} 
        J^{\eta(r,n+k,0)}\\
        & + \gamma e^{-(\gamma + \beta) t}\int_0^t e^{-(\gamma+ \beta)(t-s)}
        \sum_{k\geq 0}\sum_{\underset{(a,b)\in \mathbb{N}^2}{a\leq b}}
        \frac{(\beta s)^k}{k!} 
        \frac{\binom{b}{a}}{2^b}\frac{(2\beta (t-s))^b}{b!} G^{\nu}(r-a,n+k+b,0)ds.
\end{split}
\end{equation}
\end{thm}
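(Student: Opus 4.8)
The plan is to read off $J^{\eta}(r,n,t)$ from the duality relation of Theorem~\ref{SIRduality} applied with the deterministic initial configuration $\eta$ for the SIR process, and then to unfold the trajectory of the dual bilayer walk started from the layer-$J$ state $(r,n,J)$. The key preliminary remark is that the duality function evaluated at the fixed configuration $\eta$ reproduces the cluster functions at time zero: $d(\eta,(r,m,G))=G^{\eta}(r,m,0)$ and $d(\eta,(r,m,J))=J^{\eta}(r,m,0)$; more generally, \eqref{processduality} gives $\E_{(r,m,i)}\big[d(\eta,\xi(u))\big]=\E_{\eta}\big[d(\e(u),(r,m,i))\big]$, which equals $G^{\eta}(r,m,u)$ for $i=G$ and $J^{\eta}(r,m,u)$ for $i=J$. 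So it suffices to evaluate $\E_{(r,n,J)}\big[d(\eta,\xi(t))\big]$ by tracking the dual walk.

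First I would describe the dual dynamics started at $(r,n,J)$: while the walk stays in layer $J$ only the coordinate $n$ moves, increasing by $1$ at rate $\beta$, while at rate $\gamma$ the walk flips to layer $G$; the coordinate $r$ is frozen before the flip and the trap $\partial$ is unreachable before it. By the competing-clocks description of the CTMC, the flip time $\tau$ is exponentially distributed with parameter $\gamma$ and is independent of the rate-$\beta$ Poisson process $(N_s)_{s\ge0}$ recording the increments of $n$ in layer $J$. Splitting on $\{\tau>t\}$ versus $\{\tau\le t\}$ and using the strong Markov property at the jump time $\tau$, together with $\E_{(r,n+k,G)}\big[d(\eta,\xi(t-s))\big]=G^{\eta}(r,n+k,t-s)$, one obtains
\[
J^{\eta}(r,n,t)=e^{-\gamma t}\sum_{k\ge0}\frac{e^{-\beta t}(\beta t)^k}{k!}\,J^{\eta}(r,n+k,0)\;+\;\int_0^{t}\gamma e^{-\gamma s}\sum_{k\ge0}\frac{e^{-\beta s}(\beta s)^k}{k!}\,G^{\eta}(r,n+k,t-s)\,\rmd s .
\]
The first sum is already the first term of \eqref{eqJ}.

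Finally I would substitute the explicit formula \eqref{eqG} for $G^{\eta}(r,n+k,t-s)$ supplied by Theorem~\ref{nontranslation1}, pull the resulting sum over $k$ and the double sum over $(a,b)$ with $a\le b$ through the $s$-integral (legitimate by Fubini, since $|d|\le1$ and all the weights are probabilities), and collapse the exponential prefactors using $e^{-(\gamma+\beta)s}\,e^{-2(\gamma+\beta)(t-s)}=e^{-(\gamma+\beta)t}\,e^{-(\gamma+\beta)(t-s)}$. This puts the second term of the display into exactly the form stated in \eqref{eqJ}. The one point requiring care is the verification that inside layer $J$ the dual reduces to a pure birth process in $n$ with an independent exponential clock triggering the flip to layer $G$ — equivalently, a clean application of the strong Markov property of the dual walk at the $J\to G$ transition; once this is in place the remainder is bookkeeping with Poisson and binomial weights.
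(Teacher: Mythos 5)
Your proposal is correct and follows essentially the same route as the paper: write $J^{\eta}(r,n,t)=\mathbb{E}_{(r,n,J)}\big[d(\eta,\xi(t))\big]$ by duality, split on whether the $J\to G$ flip time $\tau\sim\mathrm{Exp}(\gamma)$ (independent of the rate-$\beta$ Poisson increments of $n$ in layer $J$) exceeds $t$, and apply the strong Markov property at $\tau$. The only difference is cosmetic: where the paper explicitly computes the trap-survival factor $e^{-2\gamma(t-s)}$ and the conditioned layer-$G$ transition probabilities, you absorb the trap via $d(\eta,\partial)=0$ and plug in the already-proven formula \eqref{eqG} for $G^{\eta}(r,n+k,t-s)$, which after the exponential identity $e^{-(\gamma+\beta)s}e^{-2(\gamma+\beta)(t-s)}=e^{-(\gamma+\beta)t}e^{-(\gamma+\beta)(t-s)}$ yields exactly \eqref{eqJ}.
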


Theorems \ref{nontranslation1} and \ref{nontranslation2} hold 
for any initial measure $\nu$ 
which is not necessarily translation invariant. 
We conclude this section showing that under the extra assumption 
of translation invariance 
we can recover the expressions (21) and (22) in \cite{Schutzetal2008}.

\subsubsection*{Special case: translation invariance}
Let us recover the formulas obtained in \cite{Schutzetal2008} 
in the translation invariant case using ODE's (here we have not used such ODE's), 
from \eqref{eqG} and \eqref{eqJ}. If $\nu$ is translation invariant, 
$G^{\nu}(r,n,0)$ and $J^{\nu}(r,n,0)$ are independent of $r$, so
\begin{equation}\label{Ginv}
    \begin{split}
        G^{\nu}(n,t)&:=G^{\nu}(r,n,t)=e^{-2(\gamma+\beta)t}
        \sum_{\underset{a\leq b}{(a,b)\in \mathbb{N}^2}}\frac{\binom{b}{a}}{2^b}\frac{(2\beta t)^b}{b!}G^{\nu}(r-a,n+b,0)\\
        &= e^{-2(\gamma+\beta)t}\sum_{b\geq 0}\sum_{a=0}^b\frac{\binom{b}{a}}{2^b}\frac{(2\beta t)^b}{b!}G^{\nu}(n+b,0).\\
        \text{Therefore,~ }G^{\nu}(n,t)&= e^{-2(\gamma+\beta)t}\sum_{b\geq 0}\frac{(2\beta t)^b}{b!}G^{\nu}(n+b,0),
    \end{split}
\end{equation}
which matches (21) in \cite{Schutzetal2008}. Now,
\begin{equation}
    \begin{split}
        J^{\nu}(n,t)&:=J^{\nu}(r,n,t)= e^{-(\gamma+\beta) t}\sum_{k\geq 0}\frac{(\beta t)^k}{k!} J^{\nu}(n+k,0)\\
        & + \gamma e^{-(\gamma + \beta) t}\int_0^t e^{-(\gamma+ \beta)(t-s)}\sum_{k\geq 0}
        \sum_{\underset{(a,b)\in \mathbb{N}^2}{a\leq b}}\frac{(\beta s)^k}{k!} \frac{\binom{b}{a}}{2^b}\frac{(2\beta (t-s))^b}{b!} G^{\nu}(n+k+b,0)ds\\
        &= e^{-(\gamma+\beta) t}\sum_{k\geq 0}\frac{(\beta t)^k}{k!} J^{\nu}(n+k,0)\\
        & + \gamma e^{-(\gamma + \beta) t}\int_0^t e^{-(\gamma+ \beta)(t-s)}\sum_{k\geq 0}\sum_{b\geq 0}\frac{(\beta s)^k}{k!} \frac{(2\beta (t-s))^b}{b!} G^{\nu}(n+k+b,0)ds.
    \end{split}
\end{equation}
Performing the change of variable $u=t-s$, the second integral becomes
\begin{equation}
\begin{split}
        & \int_0^t\sum_{k\geq 0} \frac{(\beta(t-u))^k}{k!} 
        e^{-(\gamma + \beta)u} \sum_{b\geq 0}\frac{(2\beta u)^b}{b!}
        G^{\nu}(n+k+b,0)ds\\
        &= \int_0^t\sum_{k\geq 0} \frac{(\beta(t-u))^k}{k!} G^{\nu}(n+k,s)ds,
\end{split}
\end{equation}
where we used the expression for $G^{\nu}(n,t)$ given by \eqref{Ginv}. Finally, we get
\begin{equation} 
  J^{\nu}(n,t) = e^{-(\gamma+\beta)t}\sum_{\ell \geq 0}
  \frac{(\beta t)^{\ell}}{\ell !}J^{\nu}(\ell + n,0) 
  + \gamma \int_0^t e^{-(\gamma+\beta)(t-s)}
  \sum_{\ell \geq 0}\frac{(\beta (t-s))^{\ell}}{\ell !}G^{\nu}(\ell + n,s)ds,
\end{equation}
which is exactly the solution given in \eqref{solJ}, 
found by solving the ODE for $J$, in the same spirit as in \cite{Schutzetal2008}.

\subsection{Proofs for Section \ref{sec:SIR}}\label{subsec:proofs-sir}
\subsubsection{Proofs for Subsection \ref{sectioncluster}}\label{subsec:pfs-cluster}

\begin{proof}[Proof of Theorem \ref{SIRduality}]
First, note that due to the definitions and conventions \eqref{def:Sdual},
\eqref{eq:dtrap}, \eqref{eq:ftrap}, \eqref{sirdualgen}, 
we have, for any configuration $\e$, 
\begin{equation}\label{rel:dualSIR-trap}
\mathcal{L}^{{\rm SIR}}d(\cdot, \partial)(\e) 
= \mathcal{L}^{\rm dual}d(\e,\cdot )(\partial).
 \end{equation}
Then,  in order to show the following duality relation for all configuration 
$\e$ and $(r,n,i)\in\Z\times\N \times \{G,J\}$
\begin{equation}\label{rel:dualSIR-main}\mathcal{L}^{{\rm SIR}}d(\cdot, (r,n,i))(\e) = \mathcal{L}^{\rm dual}d(\e,\cdot )(r,n,i), \end{equation}
it is convenient to split the SIR generator into three different actions, namely
\begin{equation}\label{3terms}
\mathcal{L}^{{\rm SIR}}d(\cdot,(r,n,i))(\e) = \sum_{y\in \Z} \big[A_y + B_y + C_y\big] d(\cdot,(r,n,i))(\e),
\end{equation}
where 
\begin{equation}\label{def:ABC}
    \begin{split}
        A_y \ d(\cdot,(r,n,i))(\e) &:= \beta\e^{I}_y\e^{S}_{y+1}\Big[d\Big(
        T_{y+1}^I\e, (r,n,i)\Big) - d\left(\e,(r,n,i)\right)  \Big]\\
       B_y \ d(\cdot,(r,n,i))(\e) &:= \beta\e^{I}_y\e^{S}_{y-1}\Big[d\Big(
        T_{y-1}^I\e, (r,n,i)\Big) - d\left(\e,(r,n,i)\right)  \Big]\\
       C_y \ d(\cdot,(r,n,i))(\e) & := \gamma\e^{I}_y\Big[d\Big(
     T_y^R \e, (r,n,i)\Big) - d\left(\e,(r,n,i)\right)  \Big],
    \end{split}
\end{equation}
for the operators $T^a_y$ defined in \eqref{def:Ty}. 
To conclude the proof we proceed by direct computations of each 
of the three items above:

\begin{lem}\label{abc lemma}[Actions of $A_y$, $B_y$, $C_y$]
    The actions of the three operators $A_y$, $B_y$ and $C_y$ 
    whose sum over $y \in \mathbb{Z}$ 
    defines the SIR generator \eqref{sir gen} are 
 \begin{eqnarray}\label{sumAy}
    \sum_{y\in\Z}A_y \ d(\e,(r,n,i)) 
    &=& \beta \mathds{1}_{i=G}\Big[d(\e,(r-1,n+1,i))-d(\e,(r,n,i)) \Big]\\
 \label{sumBy}
    \sum_{y\in\Z}B_y \ d(\e,(r,n,i)) 
    &=& \beta\big[ d(\e,(r,n+1,i))-d(\e,(r,n,i))\big]\\
\label{sumCy}
    \sum_{y\in\Z}C_y \ d(\e,(r,n,i)) &=& \gamma \mathds{1}_{i=J}\big[d(\e,(r,n,\phi(i)))-d(\e,(n,i)) ]-2\gamma \mathds{1}_{i=G}d(\e,(r,n,i)) \;.
\end{eqnarray}
\end{lem}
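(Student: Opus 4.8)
The plan is to exploit that $d(\cdot,(r,n,i))$ is $\{0,1\}$-valued: it is the indicator that the configuration displays the cluster pattern consisting of $n$ consecutive $S$-sites at $r,\dots,r+n-1$, flanked by an $I$-site at $r+n$ and by an $I$-site (if $i=G$) or an $R$-site (if $i=J$) at $r-1$. Consequently each increment $d(T^a_z\e,(r,n,i))-d(\e,(r,n,i))$ occurring inside $A_y$, $B_y$, $C_y$ is nonzero only when the single-site flip at $z$ either creates ($+1$) or destroys ($-1$) this pattern. For each of the three operators I would (1) write out its definition from \eqref{def:ABC}; (2) use that the prefactor $\e^I_y$, resp. $\e^I_y\e^S_{y\pm1}$, already pins down $\e$ on some sites; and (3) run a finite case analysis on the position of the flipped site $z$ relative to the cluster window $\{r-1,r,\dots,r+n\}$, discarding the cases in which the flip leaves the pattern unchanged and matching each surviving term to a cluster function with shifted parameters or toggled layer. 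Summing over $y\in\Z$ then produces precisely the right-hand sides of \eqref{sumAy}--\eqref{sumCy}.

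For $\sum_y A_y$ (infection to the right, flipping $z=y+1$ from $S$ to $I$) the flip changes $d$ only if $y+1$ lies in the window: if $y+1$ is one of the interior $S$-sites the flip destroys the pattern, and the constraint $\e_y=I$ is compatible with $d(\e,(r,n,i))=1$ only when $y+1=r$, which moreover forces layer $G$, contributing $-\beta\mathds{1}_{i=G}d(\e,(r,n,i))$; if instead the flip creates the pattern, the flipped site must become the left $I$-boundary, i.e. $y+1=r-1$ and $i=G$, and the configuration forced on $r-2,r-1,\dots,r+n$ is exactly $d(\e,(r-1,n+1,G))=1$, contributing $+\beta\mathds{1}_{i=G}d(\e,(r-1,n+1,i))$; every other position of $y+1$ gives a vanishing increment (the case $y+1=r+n$ is excluded because $\e_y=I$ would clash with $\e_{r+n-1}=S$). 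This is \eqref{sumAy}. The computation of $\sum_y B_y$ (infection to the left, flipping $z=y-1$) is the mirror image: the unique destroying term comes from $y-1=r+n-1$ and gives $-\beta d(\e,(r,n,i))$ for both layers, while the unique creating term comes from $y-1=r+n$ and turns the pattern into $d(\e,(r,n+1,i))$, yielding \eqref{sumBy}. For $\sum_y C_y$ (recovery, flipping $z=y$ from $I$ to $R$, so necessarily $\e_y=I$) the $I$-sites of the pattern are $r+n$ (both layers) and $r-1$ (layer $G$ only), so destruction contributes $-\gamma d(\e,(r,n,i))$ from $y=r+n$ together with $-\gamma\mathds{1}_{i=G}d(\e,(r,n,i))$ from $y=r-1$; creation occurs only when the flipped site becomes the left $R$-boundary, i.e. $y=r-1$ with $i=J$, in which case $\e$ must satisfy the layer-$G$ pattern, contributing $+\gamma\mathds{1}_{i=J}d(\e,(r,n,\phi(i)))$. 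Collecting these gives \eqref{sumCy}.

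Two remarks I would place up front. First, $n\ge 1$ — so the block of $S$-sites is nonempty — is what cleanly rules out the spurious cases; e.g. a flip at an interior site $y+1\in\{r+1,\dots,r+n-1\}$ cannot survive in $A_y$ since then $y$ is itself interior and $\e^I_y$ contradicts $\e_y=S$. Second, ``creating'' and ``destroying'' terms never coincide, because they impose incompatible states at site $r-1$, so there is no double counting. The only genuine work is this bookkeeping, and the main obstacle is to be exhaustive in the case analysis while correctly recognizing that each surviving flip reproduces a cluster function at parameters $(r-1,n+1)$ or $(r,n+1)$, or with the layer toggled by $\phi$ — that is, that the increments close within the family $\{d(\cdot,(r,n,i))\}_{r,n,i}$, which is exactly what makes the operator in \eqref{sirdualgen} an intertwined dual generator.
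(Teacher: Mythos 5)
Your proposal is correct and follows essentially the same route as the paper's proof: a direct case analysis on the position of the flipped site relative to the window $\{r-1,\dots,r+n\}$, using that $d(\cdot,(r,n,i))$ is the indicator of the cluster pattern, so that only the flips at $y+1=r-1,r$ (for $A$), $y-1=r+n-1,r+n$ (for $B$) and $y=r-1,r+n$ (for $C$) survive. If anything, your bookkeeping of the layer indicator $\mathds{1}_{i=G}$ (e.g.\ that the destroying term in $A$ and the term at $y=r-1$ in $C$ only arise on layer $G$) is slightly more explicit than the paper's intermediate displays, and your collected identities agree exactly with \eqref{sumAy}--\eqref{sumCy}.
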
 

Collecting \eqref{sumAy}, \eqref{sumBy}, \eqref{sumCy} and using that $d(\e,\partial) = 0$ we get
\begin{equation}
    \begin{split}
        \mathcal{L}^{{\rm SIR}}d(\cdot,(r,n,i))(\e)&= \beta \mathds{1}_{i=G}\Big[ d(\e,(r-1,n+1,i)) - d(\e,(r,n,i))\Big]\\
        & + \beta\Big[ d(\e,(r,n+1,i)) - d(\e,(r,n,i))\Big]\\
        &+ \gamma \mathds{1}_{i=J}\Big[d(\e,(r,n,\phi(i))) - d(\e,(r,n,i)) \Big]\\
        &+2\gamma \mathds{1}_{i=G}\Big[d(\e,\partial)  -d(\e,(r,n,i))\Big]=\mathcal{L}^{\rm dual}d(\e,\cdot)(r,n,i),
    \end{split}
\end{equation}
namely, we have recovered the generator $\mathcal{L}^{\rm dual}$ of equation \eqref{sirdualgen}.
\end{proof}

\begin{proof}[Proof of Lemma \ref{abc lemma}]
    We proceed by direct computations. Recall the definition of the duality function in \eqref{def:dualfct}.
    \begin{enumerate}[label=(\roman*)]
    \item \underline{Action of $A_y$} \quad
    If $y+1 \notin \lbrace r-1, \ldots, r+n \rbrace $ there is no action. Moreover, \newline
\begin{eqnarray*}
\eta_y^I\eta_{y+1}^S d(T^I_{y+1} \eta, (r,n,i))&=&
       \begin{cases}
     0 & \text{ if } y+1 \in \lbrace r, \ldots, r+n \rbrace\\
     d(\eta, (r-1,n+1,G))  & \text{ if } y+1 = r-1
       \end{cases} \\ \\
\eta_y^I\eta_{y+1}^S d( \eta, (r,n,i))&=&
       \begin{cases}
     0 & \text{ if } y \in \lbrace r, \ldots, r+n \rbrace\\
     d(\eta, (r,n,G))  & \text{ if } y = r-1
       \end{cases}
\end{eqnarray*}
and so 
\begin{equation*}
     \sum_{y\in\Z}A_y \ d(\e,(r,n,i)) = \Big( A_{r-2} + A_{r-1} \Big) d(\e,(r,n,i))
      = \beta \Big[ d(\e,(r-1,n+1,G)) - d(\e,(r,n,G)) \Big]
\end{equation*}
     \item \underline{Action of $B_y$} \quad
    If $y-1 \notin \lbrace r-1, \ldots, r+n \rbrace $ 
    there is no action. Moreover, \newline
\begin{eqnarray*}
\eta_{y-1}^S\eta_{y}^I d(T^I_{y-1} \eta, (r,n,i))&=&
       \begin{cases}
     0 & \text{ if } y-1 \in \lbrace r-1, \ldots, r+n-1 \rbrace\\
     d(\eta, (r,n+1,i))  & \text{ if } y-1 = r+n
       \end{cases}\\  \\
\eta_{y-1}^S\eta_{y}^I  d( \eta, (r,n,i))&=&
       \begin{cases}
     0 & \text{ if } y \in \lbrace r, \ldots, r+n-1 \rbrace\\
     d(\eta, (r,n,i))  & \text{ if } y = r+n
       \end{cases}
\end{eqnarray*}
and so 
\begin{equation*}
     \sum_{y\in\Z}B_y \ d(\e,(r,n,i)) = \Big( B_{r+n+1} + B_{r+n} \Big) d(\e,(r,n,i)) 
     = \beta \Big[ d(\e,(r,n+1,i)) - d(\e,(r,n,i)) \Big]
\end{equation*} 
          \item \underline{Action of $C_y$} \quad
    If $y \notin \lbrace r-1, \ldots, r+n \rbrace $ there is no action. Moreover, \newline
\begin{eqnarray*}
\eta_{y}^I d(T^R_{y} \eta, (r,n,i))&=&
       \begin{cases}
     0 & \text{ if } y \in \lbrace r, \ldots, r+n \rbrace\\
     d(\eta, (r,n,G)) \mathds{1}_{i=J}  & \text{ if } y = r-1
       \end{cases}\\ \\
\eta_{y}^I  d( \eta, (r,n,i))&=&
       \begin{cases}
     0 & \text{ if } y \in \lbrace r, \ldots, r+n-1 \rbrace\\
    d(\eta, (r,n,G)) & \text{ if } y = r-1
     \\
     d(\eta, (r,n,i))  & \text{ if } y = r+n
       \end{cases}
\end{eqnarray*}
and so 
\begin{eqnarray*}
     \sum_{y\in\Z}C_y \ d(\e,(r,n,i)) 
     & = &\Big( C_{r-1} + C_{r+n} \Big) d(\e,(r,n,i)) \\ 
     & =&\gamma \Big[ d(\e,(r,n,G)) \mathds{1}_{i=J} - d(\e,(r,n,G)) -   d(\e,(r,n,i))\Big]
\end{eqnarray*}  
    \end{enumerate}
\end{proof}

\subsubsection{Proofs for Subsection \ref{subsec:appldual}}\label{subsec:pfs-appldual}

\begin{proof}[Proof of Proposition \ref{prop:cfeq20}]
We have
\begin{align*}
    \frac{d}{dt}G^{\nu}(n,t) &=\frac{d}{dt}\mathbb{E}_{\nu}\Big[d(.,(r,n,G))(\e(t)) \Big]  
    =\mathbb{E}_{\nu}\Big[ \mathcal{L}^{{\rm SIR}}d(.,(r,n,G))(\e(t)) \Big]\\
    &= \mathbb{E}_{\nu}\Big[ \mathcal{L}^{\rm Dual}d(\e(t),.)(r,n,G)\Big]\\
    &= \beta \Big[\mathbb{E}_{\nu}\Big[d(.,(r-1,n+1,G))(\e(t)) \Big] - \mathbb{E}_{\nu}\Big[d(.,(r,n,G))(\e(t)) \Big]   \Big]\\
    & + \beta \Big[\mathbb{E}_{\nu}\Big[d(.,(r,n+1,G))(\e(t)) \Big] - \mathbb{E}_{\nu}\Big[d(.,(r,n,G))(\e(t)) \Big]   \Big]\\
    &- 2\gamma \mathbb{E}_{\nu}\Big[d(.,(r,n,G))(\e(t)) \Big] \\
    &= 2\beta\big[ G^{\nu}(n+1,t) - G^{\nu}(n,t)] - 2\gamma G^{\nu}(n,t),
\end{align*}
where we used the duality relation in the first line and the fact that $G^{\nu}$ 
does not depend on $r$, by translation invariance, in the last line. In the same way, we get:
\begin{align*}
    \frac{d}{dt}J^{\nu}(n,t) &=\frac{d}{dt}\mathbb{E}_{\nu}\Big[d(.,(r,n,H))(\e(t)) \Big]  
    =\mathbb{E}_{\nu}\Big[ \mathcal{L}^{{\rm SIR}}d(.,(r,n,H))(\e(t)) \Big] \\
    &= \mathbb{E}_{\nu}\Big[ \mathcal{L}^{\rm Dual}d(\e(t),.)(r,n,H)\Big]\\
    &= \beta \Big[\mathbb{E}_{\nu}\Big[d(.,(r,n+1,H))(\e(t)) \Big] - \mathbb{E}_{\nu}\Big[d(.,(r,n,H))(\e(t)) \Big]   \Big] \\
    &+ \gamma \Big[\mathbb{E}_{\nu}\Big[d(.,(r,n,G))(\e(t)) \Big] - \mathbb{E}_{\nu}\Big[d(.,(r,n,H))(\e(t)) \Big]   \Big]\\
    &= \beta\big[ J^{\nu}(n+1,t) - J^{\nu}(n,t)] +\gamma \big[ G^{\nu}(n,t) - J^{\nu}(n,t) \big] \;.
\end{align*}
We have thus derived \eqref{analogous}.
Note that indeed these are the analogous of equation (20) in \cite{Schutzetal2008}.
The first equation is exactly the same one as for $G$ in \cite{Schutzetal2008} 
and the solution is given by \eqref{solG}.
The second equation can be solved similarly in a recursive way,  by treating the terms $J^{\nu}(n+1,t) $ and $ G^{\nu}(n,t)$ as inhomogeneities.
The solution is given by \eqref{solJ}.
Then, using that $\e_{r-1}^S+\e_{r-1}^I+ \e_{r-1}^R = 1$, and noticing that
\begin{equation*}
    H^{\nu}(n,t) = H^{\nu}(n+1,t) + J^{\nu}(n,t) + G^{\nu}(n,t),
\end{equation*}
the equation for $H^{\nu}$, which is the same as the one in \cite{Schutzetal2008}, is
\begin{equation*}
    \frac{dH^{\nu}(n,t)}{dt} =-(\gamma + \beta)H^{\nu}(n,t) + \beta\big(H^{\nu}(n+1,t) - G^{\nu}(n,t)\big),
\end{equation*}
with solution 
\begin{equation*}
   H^{\nu}(n,t) = e^{-(\gamma+\beta)t}\sum_{\ell \geq 0}
   \frac{(\beta t)^{\ell}}{\ell !}H(\ell + n,0) - \beta\int_0^t e^{-(\gamma+\beta)(t-s)}
   \sum_{\ell \geq 0}\frac{(\beta (t-s))^{\ell}}{\ell !}G(\ell + n,s)ds \;.
\end{equation*}
Therefore, we can write the solution  $J^{\nu}$ in terms of the $H^{\nu}$ cluster as
\eqref{newcluster}.
\end{proof}

For the proofs of Theorems \ref{nontranslation1} and \ref{nontranslation2}, 
we introduce two auxiliary dynamics. The  first dynamics is
defined on $\Z\times \mathbb{N}\times \{G\}$  by the following transition rates
\begin{equation}\label{DualG}
    (r,n,G) \rightarrow (r-1,n+1,G)
    :~ \text{at rate}~ \beta,~ ~ \text{and}~~(r,n,G) \rightarrow (r,n+1,G)
    :~ \text{at rate}~ \beta.
\end{equation}
Denote by $\mathbb{P}_{(r,n,J)}^{G}$, resp. $\mathbb{E}_{(r,n,J)}^{\rm Dual,G}$ 
the probability measure, resp. expectation, under this dynamics, 
when starting from $(r,n,G)$.

The second dynamics is defined on $\Z\times \mathbb{N} \times \{J\}$
 by the following transition rates
\begin{equation}\label{DualJ}
    (r,n,J) \rightarrow (r,n+1):~ \text{at rate}~ \beta. 
\end{equation}
Denote by $ \mathbb{P}_{(r,n,J)}^{J}$, resp. $\mathbb{E}_{(r,n,J)}^{J}$ 
the probability measure, resp. expectation, under this dynamics when 
starting from $(r,n,J)$.

\begin{proof}[Proof of Theorem \ref{nontranslation1}]
Fix $(r,n)\in \Z^2$, thanks to duality we can write 
\begin{equation} \label{pippo}
   G^{\eta}(r,n,t) =  e^{-2\gamma t} \mathbb{E}_{(r,n,G)}^{G}\big[ d(\e,(r(t),n(t),G)) \big]   \;.
\end{equation}
Indeed, for the left hand side we have
\begin{align*}
 \mathbb{E}_{\e}\big[d(\e(t),(r,n,G) )\big]
 & =  \mathbb{E}_{(r,n,G)}^{\rm Dual}\big[ d(\e,\xi(t))\big] \\
    &= \mathbb{E}_{(r,n,G)}^{\rm Dual}\big[ d(\e,\xi(t)) \cap \big(\xi(t)\neq \partial \big)  \big]\\
    &= \mathbb{E}_{(r,n,G)}^{\rm Dual}\big[ d(\e,\xi(t)) \cap \big(\forall 0\leq s\leq t,~ \xi_s\neq \partial \big)  \big] \\
    &= \mathbb{E}_{(r,n,G)}^{\rm Dual}\big[ \mathds{1}_{\{\forall 0\leq s\leq t,~ \xi_s\neq \partial \}} d(\e,\xi(t))\big]\\
    &= \mathbb{P}_{(r,n,G)}\big[\forall 0\leq s\leq t,~ \xi_s\neq \partial \big] 
    \mathbb{E}_{(r,n,G)}^{\rm Dual}\big[  d(\e,\xi(t))|\forall 0\leq s\leq t,~ \xi_s\neq \partial\big] \\
    &= \mathbb{P}_{(r,n,G)}\big[\forall 0\leq s\leq t,~ \xi_s\neq \partial \big] \times \mathbb{E}_{(r,n,G)}^{G}\big[ d(\e,(r(t),n(t),G)) \big] \;.
\end{align*}
Above we used the strong Markov property on the fifth line and the sixth one 
is due to the fact 
that when conditioning on not having reached the trap $\partial$ up to time $t$, 
the dual process 
starting from $(r,n,G)$ has the same dynamics as the one defined in \eqref{DualG}.
Finally, to get the identity in \eqref{pippo}
we used the fact that the time of reaching the trap $\partial$ 
from any $(r,n,\partial)$, 
is given by an exponential clock of parameter $2\gamma$. Expanding the expectation 
in the right hand side of \eqref{pippo}, we are left with
\begin{align*}
     G^{\eta}(r,n,t) 
     &= e^{-2\gamma t} \sum_{(\widetilde{r}, \widetilde{n})\in \Z\times \mathbb{N}}
     d\big(\e,(\widetilde{r}, \widetilde{n})\big)\mathbb{P}_{(r,n,G)}^{G}\big[(r(t),n(t),G) =(\widetilde{r}, \widetilde{n},G) \big]\\
     &=e^{-2\gamma t }\sum_{\underset{a\leq b}{(a,b)\in \mathbb{N}^2}}
     d\big(\e,(r-a,n+b)\big)\mathbb{P}_{(r,n,G)}^{G}\big[(r(t),n(t),G) =(r-a, n+b,G) \big]\\
     &=e^{-2\gamma t }\sum_{\underset{a\leq b}{(a,b)\in \mathbb{N}^2}}
     d\big(\e,(r-a,n+b)\big)e^{-2\beta t }\frac{\binom{b}{a}}{2^b}\frac{(2\beta t)^b}{b!}\\
     &= e^{-2(\gamma+\beta)t}\sum_{\underset{a\leq b}{(a,b)\in \mathbb{N}^2}}\frac{\binom{b}{a}}{2^b}\frac{(2\beta t)^b}{b!}G^{\eta}(r-a,n+b,0).
\end{align*}
The third line comes from the fact that in order to reach $(r-a,n+b)$ starting from $(r,n)$ 
and with the dynamics \eqref{DualG}, one has to perform $\binom{b}{a}$ steps, 
where $r$ decreases by $1$ and $n$ increases by $1$ and, the rest of the steps 
where it is only $n$ that increases by $1$. The time between two such jumps 
is an exponential clock with parameter $2\beta$.
\end{proof}
The other cluster can be found in a similar fashion.

\begin{proof}[Proof of Theorem \ref{nontranslation2}]
Similarly to the previous computation we have
\begin{equation}\label{splitexp}
\begin{split}
    &J^{\eta}(r,n,t) = \mathbb{E}_{\eta}\big[d(\e_t,(r,n,J)) \big] 
    = \mathbb{E}^{\rm Dual}_{(r,n,J)}\big[d(\e,\xi(t)) \big]
    =\mathbb{E}^{\rm Dual}_{(r,n,J)}\big[d(\e,\xi(t))\cap(\xi(t)\neq \partial) \big]\\
    &= \mathbb{E}^{\rm Dual}_{(r,n,J)}\big[d(\e,(r(t),n(t),i(t)))\cap(\forall 0 \leq s \leq t,~ \xi(s)\neq \partial) \big]\\
    &=\mathbb{E}^{\rm Dual}_{(r,n,J)}\big[d(\e,(r(t),n(t),G))\cap(\forall 0 \leq s \leq t,~ \xi(s)\neq \partial)\big]\\
    &+\mathbb{E}^{\rm Dual}_{(r,n,J)}\big[d(\e,(r(t),n(t),J))\cap(\forall 0 \leq s \leq t,~ \xi(s)\neq \partial)\big],
\end{split}
\end{equation}
where in the last line, we partitioned according to having or not a jump from $J$ to $G$ before time $t$. Defining:
\begin{equation*}
    \mathcal{A}_s^t := \big(\forall 0 \leq u \leq s^{-},~ \xi_3(u)
   = J\big)\cap (\forall s^+ \leq u \leq t,~ \xi_3(u)= G\big)\cap\big(\forall 0 \leq u \leq t,~ \xi(u)\neq \partial\big),
\end{equation*}
the event that before time $t$, the walk does not reach the trap $\partial$ 
and at time $0\leq s\leq t$, the walk jumps from lane $J$ to $G$, we have:
\begin{equation}\label{integralAst}
\begin{split}
    &\mathbb{E}^{\rm Dual}_{(r,n,J)}\big[d(\e,(r(t),n(t),G))\cap(\forall 0 \leq s \leq t,~ \xi(s)\neq \partial)\big]
    = \int_0^t \mathbb{E}^{\rm Dual}_{(r,n,J)}\big[\mathcal{A}_s^t \big]ds\\
&= \int_0^t d\mathbb{P}^{\rm Dual}_{(r,n,J)}\big[\mathcal{A}_s^t \big] \mathbb{E}^{\rm Dual}_{(r,n,J)}\Big[d(\e,(r(t),n(t),i(t))\Big| \mathcal{A}_s^t\Big].
\end{split}
\end{equation}
 Using that the law of the jump from state $J$ to $G$ is an exponential law of parameter $\gamma$, 
 and that the rate of  jump from state $G$ to the trap $\partial$ is $2\gamma$, we have
\begin{align*}
    d\mathbb{P}^{\rm Dual}_{(r,n,J)}\big[\mathcal{A}_s^t \big]
    &= \gamma e^{-\gamma s} \big(1-\int_0^{t-s}2\gamma e^{-2\gamma u}du\big)ds 
    = \gamma e^{-\gamma s}e^{-2\gamma (t-s)}ds = \gamma e^{-2\gamma t}e^{\gamma s}ds.
\end{align*}
Now, we expand \eqref{integralAst} by partitioning on the number of increases in $n$ 
up to time $s$ for the dual process conditioned on being of type $J$ between times $0$ and $s$, 
for a certain $s\in [0,t]$:
\begin{align*}
    &\mathbb{E}^{\rm Dual}_{(r,n,J)}\Big[d(\e,(r(t),n(t),i(t))\Big|\mathcal{A}_s^t \Big]\\
    &= \sum_{k\geq 0}\mathbb{E}^{\rm Dual}_{(r,n,J)}\Big[d(\e,(r(t),n(t),i(t))\cap (r(s),n(s),i(s)) = (r,n+k,J) \Big| \mathcal{A}_s^t \Big] \\
    &=\sum_{k\geq 0} \mathbb{P}_{(r,n,J)}^{J}\big[(r(s),n(s),J) =(r,n+k,J)\big] \mathbb{E}_{(r,n+k,G)}^{G}\big[d(\e,(r(t-s),n(t-s),G)]\\
    &= \sum_{k\geq 0} e^{-\beta s}\frac{(\beta s)^k}{k!}\sum_{\underset{(a,b)\in \mathbb{N}^2}{a\leq b}}d(\e,(r-a,n+k+b,G)) \\
    & \cdot \mathbb{P}^G_{(r,n+k,G)}\big[(r(t-s),n(t-s),G) = (r-a,r+k+b,G) ]\\
    &= \sum_{k\geq 0}\sum_{\underset{(a,b)\in \mathbb{N}^2}{a\leq b}}e^{-\beta s}
    \frac{(\beta s)^k}{k!} e^{-2\beta(t-s)} \frac{\binom{b}{a}}{2^b}\frac{(2\beta (t-s))^b}{b!} G^{\eta}(r-a,n+k+b,0),
\end{align*}
where we used the strong Markov property in the second line, and expanded the expectation 
under the process \eqref{DualG} in the third line. Therefore,
\begin{equation}\label{Fst}
\begin{split}
     &\mathbb{E}^{\rm Dual}_{(r,n,J)}\big[d(\e,(r(t),n(t),G))\cap(\forall 0 \leq s \leq t,~ \xi(s)\neq \partial)\big]=\\
     &\gamma e^{-2\gamma t}\int_0^t e^{\gamma s}\sum_{k\geq 0}\sum_{\underset{(a,b)\in \mathbb{N}^2}{a\leq b}}e^{-\beta s}
     \frac{(\beta s)^k}{k!} e^{-2\beta(t-s)} \frac{\binom{b}{a}}{2^b}\frac{(2\beta (t-s))^b}{b!} G^{\eta}(r-a,n+k+b,0)ds.
\end{split}
\end{equation}
Finally, the second expectation in the last line in \eqref{splitexp} is given by:
\begin{equation}\label{Kt}
       \mathbb{E}^{\rm Dual}_{(r,n,J)}\big[d(\e,(r(t),n(t),J))\cap(\forall 0 \leq s \leq t,~ \xi(s)\neq \partial)\big] 
       = e^{-\gamma t}\sum_{k\geq 0}\frac{ e^{-\beta t}(\beta t)^k}{k!} J^{\eta}(r,n+k,0).
\end{equation}
Collecting \eqref{Fst} and \eqref{Kt} yields:
\begin{equation*}
\begin{split}
        &J^{\eta}(r,n,t) = e^{-(\gamma+\beta) t}\sum_{k\geq 0}\frac{(\beta t)^k}{k!} J^{\eta}(r,n+k,0)\\
        &+ \gamma e^{-2\gamma t}\int_0^t e^{\gamma s}\sum_{k\geq 0}\sum_{\underset{(a,b)\in \Z^2}{a\leq b}}
        e^{-\beta s}\frac{(\beta s)^k}{k!} e^{-2\beta(t-s)} \frac{\binom{b}{a}}{2^b}\frac{(2\beta (t-s))^b}{b!} G^{\eta}(r-a,n+k+b,0)ds\\
        &= e^{-(\gamma+\beta) t}\sum_{k\geq 0}\frac{(\beta t)^k}{k!} J^{\eta}(r,n+k,0)\\
        & + \gamma e^{-(\gamma + \beta) t}\int_0^t e^{-(\gamma+ \beta)(t-s)}
        \sum_{k\geq 0}\sum_{\underset{(a,b)\in \Z^2}{a\leq b}}\frac{(\beta s)^k}{k!} 
        \frac{\binom{b}{a}}{2^b}\frac{(2\beta (t-s))^b}{b!} G^{\eta}(r-a,n+k+b,0)ds.
\end{split}
\end{equation*}
\end{proof}

\appendix

\section{The diffusive contact process on small finite sets}\label{DCP-small}
\subsection{Invariant measure of the DCP on small finite sets}\label{Konno}
Here, in the same spirit as \cite{Konno1994}, we compute the stationary 
distribution of the diffusive contact process for small $N$, where we 
recall that the dynamics is given by the generator $\mathcal{L}^{\rm DCP}$, 
defined in \eqref{generator_opendiffusivecontact}, here we stress the dependence 
on the lattice size with the subscript $N$. This approach is based on the definition 
of the invariant measure as the probability distribution $\nu^{\rm DCP}$ satisfying 
$\nu^{\rm DCP}\mathcal{L}^{\rm DCP}=0$ 
cf. \cite[Chapter 1, Proposition 1.8]{IPS}). 
Here, configurations are represented in their vectorial form, 
for instance, for $N=1$, $(0)$ 
is the empty configuration and $(1)$ the full one.

We notice that in our case the presence of boundary reservoirs allows
 to see the effect of the diffusive coefficient $\mathcal{D}$ already 
 when the bulk size is $N=2$. This is in contrast to the scenario 
 studied in Section 6.2 of \cite{Konno1994} with boundary conditions 
 $\eta_0 = \eta_{N+1}=1$ when the results are equivalent to the case 
 of $\mathcal{D}=0$. 
\subsubsection*{$N=1$ site}
Let us solve the stationary condition
$ \nu^{\rm DCP}\mathcal{L}^{\rm DCP} = 0 $. 
For $N=1$, the corresponding intensity matrix is
\begin{equation*}
L^{\rm DCP} = \begin{pmatrix}
-(\alpha + \delta) & (\alpha + \delta)  \\
(\gamma + \beta +1) & -(\gamma + \beta +1),  
\end{pmatrix} 
\end{equation*}
and the stationary condition yields
\begin{equation*}
    \begin{cases}
\nu^{\rm DCP} ((0)) &= \dfrac{\gamma + \beta + 1}
{1 + \beta + \gamma + \delta + \alpha} \\
\nu^{\rm DCP} ((1)) &= \dfrac{\alpha + \delta}{1 + \beta + \gamma + \delta + \alpha}.
\end{cases}
\end{equation*}
Notice that for just one bulk site there is no diffusive effect.
\subsubsection*{$N=2$ sites} 
This is the first non trivial case where the diffusion parameter 
$\mathcal{D}$ plays a role. 
In this case the intensity matrix is 
\begin{equation*}
  L^{\rm DCP} = \begin{pmatrix}
-(\alpha + \delta) &  \delta & \alpha & 0  \\
1 + \beta  & -(1 + \beta + \mathcal{D} + \lambda + \alpha) 
& \mathcal{D} & \lambda + \alpha \\
1 + \gamma & \mathcal{D} & -(1 + \gamma + \mathcal{D} + \lambda + \delta) 
& \lambda + \delta \\
0 & 1 + \gamma & 1 + \beta & -(2 + \beta + \gamma) 
\end{pmatrix} 
\end{equation*}
and the stationary condition yields

\begin{equation}\label{sol-like-konno}
    \begin{cases}
c(\mathcal{D})\nu^{\rm DCP}((0,0))&= \mathcal{D} (\beta + \gamma + 2)^2 
+ (\alpha + \beta + \gamma+ \delta + 2+ 2 \lambda)(1+\beta + \gamma+\beta \gamma)
 \\
c (\mathcal{D})\nu^{\rm DCP}((1,0)) &= \mathcal{D}(2+\beta +\gamma)(\alpha  + \delta)
+(\gamma+1)\left[\lambda(\alpha + \delta)
+ \delta(\alpha + \beta  +\gamma  + \delta+2)\right]
 \\
c(\mathcal{D}) \nu^{\rm DCP} ((0,1)) &= \mathcal{D}(2+\beta +\gamma)(\alpha  + \delta)
+(\beta+1)\left[\lambda(\alpha + \delta)
+ \alpha(\alpha + \beta  +\gamma  + \delta+2)\right]
\\ 
c(\mathcal{D})\nu^{\rm DCP}((1,1)) &= \mathcal{D} (\alpha + \delta)(\alpha+ \delta+2 \lambda)
+\alpha\delta(2+\alpha +\beta + \gamma+\delta)\\&\qquad+
\lambda(\alpha + \delta)(\alpha + \delta+\lambda+1)
+\lambda(\alpha \beta + \gamma\delta),
\end{cases}
\end{equation}
where 
\begin{align}\label{cforsolkonno}
c(\mathcal{D}) &=   \mathcal{D}\left[(\alpha + \beta+\gamma +\delta+2)^2
+2\lambda(\alpha +\delta)\right]\\
 &+\left[\alpha + \beta+\gamma +\delta+1+(\alpha +\gamma)(\beta +\delta)\right](\alpha + \beta+\gamma +\delta+2)\nonumber\\
&
+\lambda(\alpha+\delta)(\alpha+\delta+\lambda) 
+\lambda\left[(\alpha+\delta+1)(\beta+\gamma+2)+(\alpha +\gamma)(\beta +1)
 +(\beta+\delta)(\gamma+1)\right] \nonumber
\end{align}

In both cases, setting $\alpha = \delta = \lambda$ and $\gamma = \beta = 0$, 
our results correspond to the ones in \cite{Konno1994}, as expected. 
This strategy explicitly characterizes the stationary measure 
but only works when the size of the system is small. 

From \eqref{sol-like-konno},
recalling the notation 
    $\rho_1^{\rm DCP}(1) 
    = \mathbb{E}_{\nu^{\rm DCP}}\big[ \e_1\big]=:x$, 
    $\rho_1^{\rm DCP}(2) = \mathbb{E}_{\nu^{\rm DCP}}\big[ \e_2\big]=:y$ 
    and $\rho_2^{\rm DCP}(1,2) = \mathbb{E}_{\nu^{\rm DCP}}\big[ \e_1\e_2\big]=:z$, 
    we deduce the following:
\begin{equation}\label{explicit_three_points}
    \begin{cases}
         c(\mathcal{D}) x &= c (\mathcal{D})\nu^{\rm DCP}((1,0)) +
         c(\mathcal{D})\nu^{\rm DCP}((1,1))\\
         &=\mathcal{D}(\alpha+\delta)(2+\beta+\gamma+\alpha+\delta + 2\lambda)
          + (\alpha+\delta)(\lambda+ 2+ \alpha+\delta+\gamma)\lambda \\
        &\qquad  + (\alpha\beta + \delta\gamma)\lambda
         + (\beta+\gamma+\alpha+\delta +2)\delta(\gamma+\alpha+1)\\
         c(\mathcal{D}) y & = c(\mathcal{D}) \nu^{\rm DCP} ((0,1))+
       c(\mathcal{D})\nu^{\rm DCP}((1,1))\\  
         & = \mathcal{D}(\alpha+\delta)(2+\beta+\gamma+\alpha+\delta + 2\lambda)
         + (\alpha+\delta)(\lambda+ 2+\alpha+\delta +\beta)\lambda \\
        &\qquad    + (\alpha\beta + \delta\gamma)\lambda
+ (\beta+1+\delta)\alpha(2+\delta+\gamma+\beta+\alpha)\\
           c(\mathcal{D}) z & = c(\mathcal{D})\nu^{\rm DCP}((1,1))\\
         &=\mathcal{D} (\alpha + \delta)(\alpha+ \delta+2 \lambda)
+\alpha\delta(2+\alpha +\beta + \gamma+\delta)\\&\qquad+
\lambda(\alpha + \delta)(\alpha + \delta+\lambda+1)
+\lambda(\alpha \beta + \gamma\delta).
   \end{cases}
\end{equation}

\begin{rem}[Fokker-Planck approach]\label{fokkerplanckapproach}
We could have derived directly $x,y,z$ by solving the equation
$\int\mathcal{L}^{\rm DCP}fd\nu^{\rm DCP}=0$ for 
$f(\eta)=\eta_1,\,f(\eta)=\eta_2,\,f(\eta)=\eta_1\eta_2$.
 \end{rem}
 
Then, taking the limit $\mathcal{D}\to\infty$, 
the solutions in \eqref{sol-like-konno} become
\begin{equation}\label{sol-like-konno_infty}
    \begin{cases}
c(\infty)\nu^{\rm DCP}((0,0))(\infty)&=  (\beta + \gamma + 2)^2 
 \\
c(\infty)\nu^{\rm DCP}((1,0))(\infty) &= (2+\beta +\gamma)(\alpha  + \delta)
 \\
c(\infty)\nu^{\rm DCP} ((0,1)) (\infty)&= (2+\beta +\gamma)(\alpha  + \delta)
\\
c(\infty)\nu^{\rm DCP}((1,1))(\infty) &=  (\alpha + \delta)(\alpha+ \delta+2 \lambda)
\end{cases}
\end{equation} 
where 
\begin{equation}\label{cinftyforsolkonno}
c(\infty)=(\alpha + \beta+\gamma +\delta+2)^2+2\lambda(\alpha +\delta)
\end{equation} 
Due to the infinite diffusion,
 $\nu^{\rm DCP}((1,0))(\infty)=\nu^{\rm DCP} ((0,1)) (\infty)$, 
 so this quantity only depends on the total number of particles, 
 regardless of their position.

Finally, taking the limit $\mathcal{D}\to\infty$, the solutions in \eqref{explicit_three_points} simplifies to
\begin{equation}\label{explicit_three_points-infty}
    \begin{cases}
        & c(\infty)x (\infty)
        = (\alpha+\delta)(2+\beta+\gamma+\alpha+\delta + 2\lambda)\\
        & c(\infty)y (\infty)
        = (\alpha+\delta)(2+\beta+\gamma+\alpha+\delta + 2\lambda)\\
        & c(\infty)z (\infty)= (\alpha + \delta)(\alpha+ \delta+2 \lambda) \;.
   \end{cases}
\end{equation} 
Notice that, as remarked at the end of Appendix \ref{appN2} 
for the absorption probabilities of one dual particle starting 
in site $1$ or in site $2$, also the corresponding correlations 
$\rho_1^{\rm DCP}(1)$ and $\rho_1^{\rm DCP}(2)$ are the same. 

\subsection{Absorption probabilities} \label{appN2}
In this subsection, related to Section \ref{subsec:corfu},
we explicitly compute the absorption probabilities
of the dual process
given in Proposition \ref{any_point_label}  for $N=1$, 
which is simpler because there is neither diffusion nor birth effects, 
then for $N=2$.
Recall that for the sake of simplicity, 
 we took $\beta=\delta=0$ in Section \ref{subsec:corfu}, 
 and we will also do this in what follows.
\subsubsection*{$N=1$ site} 
We perform a conditioning on the first step of the dual dynamics. 
The behavior of the bulk is conservative as the contact dynamics has no effect 
here (no possible birth). This provides a finite sum in the expression 
of the correlation function \eqref{lcorrelations2} obtained through duality, 
allowing us to compute it conveniently. By \eqref{one_point_corr}, we have
\begin{equation*}
    \mathbb{E}_{\nu^{\rm DCP}}[1-\e_1]
    =1-\rho_1^{\rm DCP}(1)= \mathbb{P}_{\delta_1}\Big(\xi_0 (\infty ) 
    = 0 \Big) c_{-}^0 + \mathbb{P}_{\delta_1} \Big(\xi_0 (\infty ) = 1 \Big)c_{-}.
\end{equation*}
To compute the absorption probabilities 
$\mathbb{P}_{\delta_1}\Big(\xi_0 (\infty ) = 0 \Big)$ and 
$\mathbb{P}_{\delta_1} \Big(\xi_0 (\infty ) = 1 \Big)$,
we perform a conditioning on the first possible outcome of the process, 
starting from a particle at site $1$:
\begin{align*}
    \mathbb{P}_{\delta_1} \Big(\xi_0 (\infty ) = 0 \Big) 
    & =  \mathbb{P}_{\delta_1} \Big(\xi_0 (\infty ) = 0 \mid 
    \text{left jump} \Big)  \mathbb{P}_{\delta_1}(\text{left jump}) \\ 
    &+ \mathbb{P}_{\delta_1} \Big( \xi_0 (\infty ) = 0 \mid 
    \text{it dies} \Big)  \mathbb{P}_{\delta_1}( \text{it dies} ) \\ 
    & = 
    0 \cdot \frac{\alpha + \gamma }{\alpha + \gamma + 1 } 
    + 1 \cdot \frac{1}{\alpha + \gamma + 1 } \\ & = \frac{1}{\alpha + \gamma + 1 }
\end{align*}
while if the particle is absorbed, 
\[\mathbb{P}_{\delta_1} \Big(\xi_0 (\infty )
 = 1 \Big) = 1- \mathbb{P}_{\delta_1} \Big(\xi_0 (\infty ) = 0 \Big) 
 = \frac{\alpha + \gamma }{\alpha + \gamma + 1 } \;.\]
Therefore,
\begin{equation*}
    \rho_1^{\rm DCP}(1)= \frac{\alpha}{\alpha + \gamma +1},
\end{equation*}
\subsubsection*{$N=2$ sites} 
We now
 explicitly compute the absorption probabilities 
 of the dual process when $N=2$, as well as their fast stirring limits.
 Notice that, in computing correlations,  this is the first non trivial case. 
 In addition to the dynamics for one bulk site, 
the effect of the exclusion process (with rate $\mathcal{D}$) as well as 
the effect of the contact process (with rate $\lambda$) are present, 
therefore we have an infinite sum in equation \eqref{lcorrelations2}, 
as expressed below in \eqref{corr_for_three}. Recall the expressions 
of the correlation functions \eqref{ell_point_corr_def} provided by 
the duality relations \eqref{one_point_corr}, and \eqref{lcorrelations2}:
\begin{equation}\label{corr_for_three}
    \begin{cases}
\rho_1^{\rm DCP}(1) &= 1-\sum_{k\geq 0} \mathbb{P}_{\delta_1}\big(\xi_0(\infty)=k\big)c_{-}^k\\
\rho_2^{\rm DCP}(1,2) &= 1-\sum_{k\geq 0} \mathbb{P}_{\delta_{1,2}}\big(\xi_0(\infty)=k\big)c_{-}^k\\
\rho_1^{\rm DCP}(2) &= 1-\sum_{k\geq 0} \mathbb{P}_{\delta_2}\big(\xi_0(\infty)=k\big)c_{-}^k \;.
    \end{cases}
\end{equation}
To compute the absorption probabilities appearing in the infinite sums, 
as in the case $N=1$ , we condition on the first possible outcome 
of the process, starting from a particle at site $1$, $2$ or at both sites. 
For $k\geq 0$, we define
\begin{equation*}
    x_1^k:=\mathbb{P}_{\delta_1}\Big[\xi_0(\infty)=k \Big],
    ~~~x_2^k:= \mathbb{P}_{\delta_{1,2}}\Big[\xi_0(\infty)=k \Big],
    ~~\text{and}~~x_3^k := \mathbb{P}_{\delta_2}\Big[\xi_0(\infty)=k \Big],
\end{equation*}
the probabilities that $k \geq 0$ dual particles are absorbed in the left 
reservoir starting from a particle at site $1$ ($x_1^k$), 
at site $2$ ($x_3^k$) or at both sites ($x_2^k$). 
To alleviate notation we do not write the dependence on the model parameters 
$\alpha, \gamma, \lambda$ and $\mathcal{D}$ on these absorption probabilities. 
For $k\geq 0$, define the column vector
\begin{equation}\label{Ck}
    C_k=\begin{pmatrix} 
       x_1^k\\ 
        x_2^k\\ 
        x_3^k  
\end{pmatrix} .
\end{equation}
\begin{itemize}
    \item For $k=0$, the conditioning on the first step leads to the following 
    system:
    \begin{equation}\label{syst:k0} 
   \left\{
    \begin{array}{ll}
        x_1^0 -\frac{\lambda}{\alpha + \gamma + \mathcal{D}+ \lambda +1}~x_2^0 
        - \frac{\mathcal{D}}{\alpha + \gamma + \mathcal{D}+ \lambda +1}
         ~x_3^0=\frac{1}{\alpha + \gamma + \mathcal{D}+ \lambda +1}\\\\
        \frac{-1}{\alpha + \gamma+2}~x_1^0 
        + x_2^0 - \frac{1}{\alpha + \gamma +2}~x_3^0=0\\\\
        \frac{-\mathcal{D}}{\mathcal{D}+ \lambda +1}
        ~x_1^0-\frac{\lambda}{\mathcal{D}+ \lambda +1}
        ~ x_2^0 + x_3^0 = \frac{1}{\mathcal{D}+ \lambda +1},
        \end{array}
\right.
\end{equation}
that is,
 \begin{equation}\label{syst:k0-bis}
  M_{2}C_0
  = \begin{pmatrix} 
       \frac{1}{\alpha + \gamma + \mathcal{D}+ \lambda +1} \\
        0\\ 
        \frac{1}{\mathcal{D}+ \lambda +1}
      \end{pmatrix} ,
\end{equation} 
where
 \begin{equation}\label{M2gamma}
  M_{2} :=  \begin{pmatrix} 
         1& \frac{- \lambda}{\alpha + \gamma + \mathcal{D} + \lambda + 1} 
         & \frac{-\mathcal{D}}{\alpha + \gamma+ \mathcal{D} + \lambda + 1}  \\\\ 
         \frac{-1}{\alpha + \gamma +2}& 1& \frac{-1}{\alpha + \gamma +2}\\\\ 
         \frac{-\mathcal{D}}{\mathcal{D}+\lambda +1} 
         &\frac{-\lambda}{\mathcal{D}+\lambda +1} & 1
   \end{pmatrix} \;.
 \end{equation} 
We have
\begin{equation}\label{dgamma} 
d:=  \text{det} \ M_2 = \dfrac{\widetilde d}
{(\mathcal{D} + \lambda + 1)(\alpha + \gamma + 2)(\mathcal{D} 
+ \alpha + \gamma + \lambda + 1)},
   \end{equation}
where 
\begin{equation}\label{dgammatilde} 
\begin{split}
        \widetilde{d} & :=
    \mathcal{D}A
   +(\alpha+\gamma)\left[(\alpha+\gamma+\lambda+2)(\lambda+1)+1\right]
   +2(\lambda+1),
\end{split}
\end{equation}  
with 
\begin{equation}\label{Afortilded}
A:=  (\alpha+\gamma+\lambda+2)^2-\lambda(\lambda+4)
= (\alpha+\gamma+2)^2 +2\lambda(\alpha+\gamma)
\end{equation}  
So $\det M_2\neq 0$, and solving \eqref{syst:k0-bis} by inverting $M_2$ yields
\begin{equation} \label{system0}
   \left\{
    \begin{array}{ll}
      x_1^0 = \widetilde{d}^{-1}(2\mathcal{D} + \lambda + 1)(\alpha + \gamma + 2)
        \\
     x_2^0= \widetilde{d}^{-1} \big[4\mathcal{D}
      + \alpha + \gamma + 2\lambda + 2 \big]
        \\
     x_3^0 =\widetilde{d}^{-1} (2\mathcal{D} + \alpha + \gamma + \lambda + 1)
     (\alpha + \gamma + 2).
        \end{array}
\right.
\end{equation} 
\item For $k=1$, the same conditioning leads to
\begin{equation}\label{syst:k1} 
   \left\{
    \begin{array}{ll}
        x_1^1 - \frac{\lambda}{\alpha + \gamma + \mathcal{D}+ \lambda +1} 
        ~ x_2^1 - \frac{\mathcal{D}}{\alpha + \gamma + \mathcal{D}+ \lambda +1}~ x_3^1=\frac{\alpha+ \gamma}{\alpha + \gamma + \mathcal{D}+ \lambda +1} \\\\
        \frac{-1}{\alpha + \gamma +2}~x_1^1 + x_2^1 - \frac{1}{\alpha + \gamma +2}~x_3^1 =\frac{\alpha+ \gamma}{\alpha+ \gamma +2} ~ x_3^0\\\\
        \frac{-\mathcal{D}}{\mathcal{D}+ \lambda +1}~ x_1^1 -\frac{\lambda}{\mathcal{D}+ \lambda +1} ~ x_2^1 +x_3^1 = 0,
        \end{array}
\right.
\end{equation}
that is,
 \begin{equation}\label{syst:k1-bis}
  M_2 C_1
  = \begin{pmatrix} 
        \frac{\alpha+ \gamma}{\alpha + \gamma + \mathcal{D}+ \lambda +1}\\ \\
        \frac{\alpha+ \gamma}{\alpha+ \gamma +2} ~ x_3^0\\ \\
        0
      \end{pmatrix} 
\end{equation}  
Again, as $\det M_2\neq 0$, we can invert this and, to lighten 
the formulas of the solution, we introduce the notation   
\begin{eqnarray}
        \varphi&:=
        \mathcal{D}B
        +  \lambda^2+\lambda(\alpha + \gamma + 2) 
        +\alpha + \gamma +  1, \label{def-phi-for-n=2}
\end{eqnarray} 
where
\begin{equation}\label{Bforphi}
B:= (\alpha + \gamma + 2\lambda  + 2).
\end{equation} 
We get:
\begin{equation}\label{system1} 
   \left\{
    \begin{array}{ll}
        x_1^1 = \widetilde{d}^{-1}  (\alpha + \gamma) (\alpha + \gamma +2)\left[ 
\mathcal{D} +1+
 \dfrac{\lambda( \alpha + \gamma + 1)}{\alpha + \gamma + 2} 
         \right.
        \\ \left. \qquad \qquad 
        + \lambda \widetilde{d}^{-1} (2\mathcal{D} + \lambda + 1)(2\mathcal{D} + \alpha + \gamma + \lambda + 1) \right]
        \\
        x_2^1= \widetilde{d}^{-1} (\alpha + \gamma) (\alpha + \gamma +2)
        \left[ 
        \dfrac{2\mathcal{D} + \lambda + 1}{\alpha + \gamma + 2}    
           + \widetilde{d}^{-1} \varphi(2\mathcal{D} + \alpha + \gamma +1+\lambda)\right]
       \\\\
        x_3^1 = \widetilde{d}^{-1}  (\alpha + \gamma) (\alpha + \gamma +2)\left[ 
\mathcal{D} + \dfrac{\lambda}{\alpha + \gamma + 2}        
        +\widetilde{d}^{-1} \lambda(2\mathcal{D}+ \lambda + \alpha + \gamma  + 1)^2\right] .
        \end{array}
\right.
\end{equation}
\item For $k=2$, the same conditioning leads to
\begin{equation}\label{syst:k2}  
   \left\{
    \begin{array}{ll}
        x_1^k - \frac{\lambda}{\alpha+ \gamma + \mathcal{D}+ \lambda +1} 
        ~ x_2^k - \frac{\mathcal{D}}{\alpha+\gamma +\mathcal{D}+ \lambda +1}
         ~x_3^k=0\\\\
        \frac{-1}{\alpha+ \gamma +2}~x_1^k + x_2^k 
        - \frac{1}{\alpha+{\gamma}+2}~x_3^k 
        =\frac{\alpha+ \gamma}{\alpha+ \gamma +2} ~ x_3^{k-1}\\\\
       \frac{-\mathcal{D}}{\mathcal{D}+ \lambda +1}~ x_1^k 
       -\frac{\lambda}{\mathcal{D}+ \lambda +1} ~ x_2^k +x_3^k = 0
        \end{array}
\right.
\end{equation}
that is,
 \begin{equation}\label{syst:k2-bis}
  M_2 C_k
  = \begin{pmatrix} 
        0\\ 
        \frac{\alpha+ \gamma}{\alpha+ \gamma +2} ~ x_3^{k-1}\\
        0
      \end{pmatrix} 
\end{equation} 
Again, inverting $M_2$, using the definition of $\varphi$ given in 
\eqref{def-phi-for-n=2}, and introducing 
\begin{eqnarray}
\psi&:=& \mathcal{D}(\alpha +  \gamma+2) + \lambda +  \widetilde{d}^{-1} \lambda(\alpha + \gamma + 2)(2\mathcal{D} + \alpha + \gamma + \lambda + 1)^2
\label{def-psi-for-n=2}
\end{eqnarray} 
we get:
\begin{equation}\label{system2}  
   \left\{
    \begin{array}{ll}
        x_1^2 = \widetilde{d}^{-2} \lambda(\alpha + \gamma)^2
        (2\mathcal{D}+\lambda+1)\psi
        \\
        x_2^2= \widetilde{d}^{-2} (\alpha + \gamma)^2\varphi\psi 
\\
        x_3^2 =  \widetilde{d}^{-2}\lambda(\alpha + \gamma)^2
        (2\mathcal{D} + \alpha + \gamma + \lambda + 1)\psi.
        \end{array}
\right.
\end{equation}
\item For $k=3$, the same reasoning as for the cases above yields
\begin{equation}\label{system3}  
   \left\{
    \begin{array}{ll}
        x_1^3 = \widetilde{d}^{-3}\lambda^2(\alpha + \gamma)^3
       (2\mathcal{D}+\lambda+1)(2\mathcal{D}+\alpha + \gamma + \lambda +1) \psi
        \\
        x_2^3= \widetilde{d}^{-3}\lambda(\alpha + \gamma)^3
        (2\mathcal{D}+\alpha+\gamma+\lambda+1)
        \varphi\psi 
        \\
       x_3^3 = \widetilde{d}^{-3}\lambda^2(\alpha + \gamma)^3
        (2\mathcal{D} + \alpha + \gamma + \lambda + 1)^2\psi.
        \end{array}
\right.
\end{equation}
\item For $k > 3$, using the same conditioning and reasoning recursively, we find that
\begin{equation}\label{systemk}  
   \left\{
    \begin{array}{ll}
        x_1^k = \widetilde{d}^{-k}\lambda^{(k-2)}(\alpha + \gamma)^k 
      (2\mathcal{D}+\lambda+1)(2\mathcal{D}+\alpha + \gamma + \lambda +1)^{(k-3)}\varphi\psi
        \\
        x_2^k= \widetilde{d}^{-k}\lambda^{(k-3)}(\alpha + \gamma)^k (2\mathcal{D}+\alpha+\gamma+\lambda+1)^{(k-3)}\varphi^2\psi
\\
        x_3^k = \widetilde{d}^{-k}\lambda^{(k-2)}(\alpha + \gamma)^k 
      (2\mathcal{D} + \alpha + \gamma + \lambda + 1)^{(k-2)}\varphi\psi.
        \end{array}
\right.
\end{equation}
\end{itemize}

\noindent 
 We now consider the fast stirring limit of the absorption probabilities just found.
For all $k \geq 0 $ and $i=1,2,3$ we denote 
\begin{equation*}
    x_i^k (\infty) := \lim_{\mathcal{D}\to\infty} x_i^k \;.
\end{equation*}
The solutions obtained in \eqref{system0}, \eqref{system1}, 
\eqref{system2}, \eqref{system3}, \eqref{systemk} simplify 
(recall the definitions \eqref{Afortilded} of $A$
and \eqref{Bforphi} of $B$) to
\begin{itemize}
\item For $k=0$,
\begin{equation} \label{system0-infty}
   \left\{
    \begin{array}{ll}
     x_1^0 (\infty) = x_3^0(\infty) = 2 A^{-1}(\alpha + \gamma + 2)
        \\
     x_2^0(\infty)= 4A^{-1}
        \end{array}
\right.
\end{equation} 
\item For $k=1$, we get:
\begin{equation}\label{system1-infty} 
   \left\{
    \begin{array}{ll}
        x_1^1(\infty) =  x_3^1 (\infty)= A^{-1}(\alpha + \gamma) (\alpha + \gamma +2)
        \left[1 + 4A^{-1}\lambda\right]
        \\
        x_2^1(\infty)= A^{-1} 2(\alpha + \gamma) 
        \left[ 
        1 + A^{-1}B(\alpha + \gamma +2) \right]
        \end{array}
\right.
\end{equation}
\item For $k=2$, 
we get:
\begin{equation}\label{system2-infty}  
   \left\{
    \begin{array}{ll}
        x_1^2 (\infty)= x_3^2(\infty) =
        2A^{-2}\lambda(\alpha + \gamma)^2(\alpha + \gamma +2) \left[1 + 4A^{-1}\lambda \right]
        \\
        x_2^2(\infty)= B(2\lambda)^{-1}x_1^2 (\infty)
        \end{array}
\right.
\end{equation}
\item For $k=3$, the same reasoning as for the cases above yields
\begin{equation}\label{system3-infty}  
   \left\{
    \begin{array}{ll}
        x_1^3 (\infty)= x_3^3 (\infty)= 4A^{-3}\lambda^2(\alpha + \gamma)^3
        (\alpha + \gamma +2) \left[1 + 4A^{-1}\lambda \right]
        \\
        x_2^3(\infty)= B(2\lambda)^{-1} x_1^3 (\infty)
        \end{array}
\right.
\end{equation}
\item For $k > 3$, we find that
\begin{equation}\label{systemk-infty}  
   \left\{
    \begin{array}{ll}
        x_1^k(\infty) = x_3^k (\infty)= A^{-k}(2\lambda)^{k-2}B(\alpha + \gamma)^k(\alpha + \gamma +2) \left[1 + 4A^{-1}\lambda \right]
        \\
        x_2^k(\infty)= B(2\lambda)^{-1} x_1^k(\infty)
        \end{array}
\right.
\end{equation}
\end{itemize}
These formulas induce simple expressions for the values \eqref{corr_for_three} 
when $\mathcal{D}\to\infty$, where in particular for all 
$k \geq 0$, $x_1^k(\infty) = x_3^k(\infty)$, 
that is, the absorption probability to the left reservoirs of one particle at site $1$ 
is the same as the one of a particle at site $2$.
\\
\\
\noindent{\bf Acknowledgements.} 
We would like to thank the referee for providing helpful comments.
This work was possible thanks to the conferences we attended together: 
Rencontres de Probabilit\'es \`a Rouen, PSPDE XI, Inhomogeneous Random Systems.
C.F. thanks MAP5 lab., and  E.S. thanks U. Lisboa for hospitality and support. 
C.F. is member of Gruppo Nazionale per la Fisica Matematica of Istituto 
Nazionale di Alta Matematica (INdAM) and acknowledges the European
 Union - Next Generation EU - Grant PRIN 2022B5LF52.
G.M.S. acknowledges financial support by FCT (Portugal) through project UIDB/04459/2020, 
doi 10-54499/UIDP/04459/2020
and by the FCT Grant 2020.03953.CEECIND.

\bibliographystyle{plain}
\bibliography{biblio.bib}

\end{document}